\theoremstyle{plain}
\newtheorem{Theorem}{Theorem}[section]
\newtheorem{Proposition}[Theorem]{Proposition}
\newtheorem{Conjecture}[Theorem]{Conjecture}
\newtheorem{Corollary}[Theorem]{Corollary}
\newtheorem{Lemma}[Theorem]{Lemma}
\theoremstyle{definition}
\theoremstyle{remark}
\newtheorem{Remark}[Theorem]{Remark}
\title{Classification of $\mathbf{(3 \!\mod 5)}$ arcs in $\mathbf{\PG(3,5)}$}
\author[S.~Kurz]{}
\subjclass{Primary: 51E22; Secondary: 51E21, 94B05.}
\keywords{Projective geometries, optimal linear codes, quasi-divisible arcs, $(t\mod q)$-arcs, Griesmer bound.}
 \email{sascha.kurz@uni-bayreuth.de}
 \email{i.landjev@nbu.bg}
 \email{assia@fmi.uni-sofia.bg}
\newcommand{\cB}{\mathcal{B}}
\newcommand{\cF}{\mathcal{F}}
\newcommand{\cH}{\mathcal{H}}
\newcommand{\cK}{\mathcal{K}}
\newcommand{\cP}{\mathcal{P}}
\newcommand{\cQ}{\mathcal{Q}}
\newcommand{\PG}{\operatorname{PG}}
\newcommand{\F}{\mathbb{F}}
\newcommand{\N}{\mathbb{N}}
\begin{document}
\maketitle

\centerline{\scshape Sascha Kurz$^\dagger$, Ivan Landjev$^\ddag$, and Assia Rousseva$^\S$}
\medskip
{\footnotesize
 \centerline{$^\dagger$Mathematisches Institut, Universit\"at Bayreuth, D-95440 Bayreuth, Germany}
   \centerline{$^\ddag$Institute of Mathematics and Informatics, Bulgarian Academy of Sciences, 8 Acad G. Bonchev str., 1113 Sofia, Bulgaria}
   \centerline{$^\ddag$New Bulgarian University, 21 Montevideo str, 1618 Sofia, Bulgaria}
   \centerline{$\S$Sofia University, Faculty of Mathematics and Informatics,J. Bourchier Blvd., 1164 Sofia, Bulgaria}
}

\bigskip

\begin{abstract}
  The proof of the non-existence of Griesmer $[104, 4, 82]_5$-codes is just one of many examples where extendability results are used. 
  In a series of papers Landjev and Rousseva have introduced the concept of $(t\mod q)$-arcs as a general framework for extendability results 
  for codes and arcs. Here we complete the known partial classification of $(3 \mod 5)$-arcs in $\PG(3,5)$ and uncover two 
  missing, rather exceptional, examples disproving a conjecture of Landjev and Rousseva. As also the original non-existence proof of Griesmer 
  $[104, 4, 82]_5$-codes is affected, we present an extended proof to fill this gap.  
\end{abstract}

\section{Introduction}
An $[n,k,d]_q$-code is a $q$-ary linear code with length $n$, dimension $k$, and minimum Hamming distance $d$. Given the field size $q$, a  
main problem in coding theory is to optimize the three remaining parameters. So, let $n_q(k,d)$ denote the minimal length of a linear code 
over $\F_q$ for fixed dimension $k$ and minimum distance $d$. The so-called \emph{Griesmer bound}, see e.g.\ 
\cite{griesmer1960bound,solomon1965algebraically}, is given by 
\begin{equation}
  n_q(k,d)\ge g_q(k,d):=\sum_{i=0}^{k-1} \left\lceil\frac{d}{q^i}\right\rceil.
\end{equation}
Codes attaining this bound are called \emph{Griesmer codes}. In \cite{baumert1973note} is was shown that for all sufficiently large values of $d$, 
depending on $k$ and $q$, we have $n_q(k,d)=g_q(k,d)$. The exact value of $n_q(k,d)$ is known for all $k\le 8$ when $q=2$, for all $k\le 5$ when $q=3$, 
for all $k\le 4$ when $q=4$, and for all $k\le 3$ when $q\le 9$. For $q=5$ and $k=4$ only four cases of $n_5(4,d)$ where unknown before \cite{landjev2016non}. 
Here we fill a gap in the corresponding non-existence proof of Griesmer $[104, 4, 82]_5$-codes. The three remaining unsettled cases are $d\in\{81,161,162\}$. 

In order to show $n_q(k,d)>g_q(k,d)$, the non-existence of an $[g_q(k,d),k,d]_q$-code has to be proven. To this end, so-called extendability results are 
used in many cases. Arguably, the most simple extendability result is that adding a parity check bit to an $[n, k, d]_2$-code with odd minimum distance $d$ yields an
$[n + 1, k, d + 1]_2$-code. In \cite{hill1995extensions,hill1999extension} Hill and Lizak have shown that an $[n, k, d]_q$-code, where $\gcd(d,q)=1$ and the 
weights of the codewords all are either congruent to $0$ or $d$ modulo $q$, is extendable to an $[n + 1, k, d + 1]_q$-code.

In most parts of the paper we will use the geometric description of linear codes as \emph{multisets of points} or \emph{arcs} $\cK$ in the projective geometry 
$\PG(v-1,q)$, see e.g.\ \cite{dodunekov1998codes}. An arc $\cK$ is a \emph{Griesmer arc} if the corresponding code is a Griesmer code. Griesmer codes commonly 
have certain restrictions on their possible weights modulo some divisor $\Delta$ and the corresponding arcs are called quasi-divisible, see e.g.\ \cite{landjev2013extendability}.  
A particularly structured subclass of quasi-divisible arcs are so-called $(t\mod q)$-arcs. The extendability of Griesmer arcs is closely linked to the structure of quasi-divisible 
and $(t\mod q)$-arcs, see e.g.\ \cite{landjev2019divisible,landjev2016extendability}. Partial classification results for $(3 \mod 5)$-arcs in $\PG(3,5)$ were given 
in \cite[Theorem 4.1]{landjev2017characterization} and \cite[Theorem 6]{rousseva2015structure}. Due to a flaw examples with cardinalities $128$ and $143$ where missed, which 
also affects the original non-existence proof of Griesmer $[104, 4, 82]_5$-codes \cite{landjev2016non}. The main target of this article is the full classification of 
all $(3 \mod 5)$-arcs in $\PG(3,5)$. In total there are three examples that do not arise by a lifting construction, see e.g.\ \cite[Theorem 5]{landjev2019divisible}, 
which disproves a conjecture of Landjev and Rousseva.  

The remaining part is structured as follows. In Section~\ref{sec_preliminaries} we present the necessary preliminaries. Known constructions and characterization results 
for $(t\mod q)$-arcs are the topic of Section~\ref{sec_t_mod_q_arcs_constructions}. We also slightly extend the known classification result for strong $(2\mod q)$-arcs in $\PG(2,q)$ 
and give a self-contained proof. We briefly discuss the classification of strong $(3\mod 5)$-arcs in $\PG(2,5)$ in Section~\ref{section_strong_3_mod_5_in_pg_2_5} before we  
treat the classification of all strong $(3\mod 5)$-arcs in $\PG(3,5)$ in Section~\ref{section_strong_3_mod_5_in_pg_3_5}. The adjusted proof of the non-existence of $(104,22)$-arcs 
in $\PG(3,5)$ is the topic of Section~\ref{sec_non-existence_104_22_4_5}. Since this proof relies on several computer calculations we present theoretical substitutes for 
most parts in Subsection~\ref{subsec_shortcuts}. The combinatorial details of all strong $(3\mod 5)$-arcs in $\PG(2,5)$ are presented in an appendix.  

\section{Preliminaries}
\label{sec_preliminaries}
Let $\cP$ denote the set of points and $\cH$ be the set of hyperplanes of $\PG(v-1,q)$, where $v\ge 2$. We have $\#\cP=\#\cH=[v]_q$, where 
$[k]_q:=\left(q^k-1\right)/(q-1)$ for all $k\in\N$. Every mapping $\cK\colon\cP\to\mathbb{N}_0$ is called 
a \emph{multiset} (of points) in $\PG(v-1,q)$. We extend such a mapping additively to subsets $\cQ$ of $\cP$, i.e., $\cK(\cQ)=\sum_{P\in\cQ} \cK(P)$. If $S$ is an $s$-dimensional 
subspace, using the algebraic dimension, we speak of an \emph{$s$-space}. I.e., $1$-spaces are points, $2$-spaces are lines, and $(v-1)$-spaces in $\PG(v-1,q)$ are hyperplanes. 
We also write $\cK(S)$ associating an $s$-space $S$ with the set of its points. The integer $\cK(P)$ is also called the \emph{multiplicity} of a point $P\in\cP$ and $n:=\cK(\cP)$ 
the \emph{cardinality} of $\cK$. For each integer $f$ an \emph{$f$-point} is a point $P$ with multiplicity $\cK(P)=f$, an $f$-line is a line $L$ with multiplicity $\cK(L)=f$, and 
an $f$-hyperplane $H$ is a hyperplane with multiplicity $\cK(H)=f$. The support of $\cK$ is given by 
$\operatorname{supp}(\cK)=\{P\in\cP\,:\, \cK(P)>0\}$. Given a subset $\cQ\subseteq \cP$ the corresponding \emph{characteristic (multi-) set} $\chi_{\cQ}\colon\cP\to\{0,1\}$ 
is given by $\chi_{\cQ}(P)=1$ iff $P\in\cQ$. By $a_i$ we denote the number of hyperplanes $H\in\cH$ with $\cK(H)=i$ and call the sequence $(a_i)_{i\in\N_0}$ the  
\emph{spectrum} of $\cK$. By double-counting incidences between points and hyperplanes one obtains the so-called \emph{standard equations}:
\begin{eqnarray}
    \sum_{i\ge 0} a_i &=& [v]_q \label{eq_standard_equation_1}\\ 
    \sum_{i\ge 0} ia_i &=& n\cdot [v-1]_q\label{eq_standard_equation_2}\\ 
    \sum_{i\ge 0} {i \choose 2}a_i &=& {n\choose 2}\cdot[v-2]_q+q^{v-2}\cdot\sum_{i\ge 2} {i\choose 2}\lambda_i\label{eq_standard_equation_3},
\end{eqnarray}
where $\lambda_j$ denotes the number of points $P\in\cP$ with $\cK(P)=j$ for all $j\in \N_0$. The coding theoretic analog of the standard equations are the first three 
MacWilliams equations. For the $\lambda_i$ we have
\begin{equation}
  \sum_{i\ge 0} \lambda_i \,=\, [v]_q\quad\text{and}\quad \sum_{i\ge 0} i\lambda_i \,=\, n.
\end{equation}

If a multiset $\cK$ has cardinality $n$ and satisfies $\cK(H)\le s$ for all hyperplanes $H\in\cH$, then we call $\cK$ an \emph{$(n,\le s)$-arc} and an \emph{$(n,s)$-arc}  
if additionally a hyperplane $H$ with $\cK(H)=s$ exists. Similarly, a multiset $\cK$ with cardinality $n$ and $\cK(H)\ge s$ for all $H\in\cH$ is called an 
\emph{$(n\ge s)$-blocking set with respect to hyperplanes} or an \emph{$(n,\ge s)$-minihyper}. If a hyperplane $H\in\cH$ with $\cK(H)=s$ exists, then we write $(n,s)$ instead 
of $(n,\ge s)$. Specifying the mentioned relation between linear codes and arcs, we state that there exists a one-to-one correspondence between the classes of isomorphic 
$[n,k,d]_q$-codes and the classes of projectively equivalent $(n,n-d)$-arcs in $\PG(k-1,q)$.  

An $(n,s)$-arc $\cK$ in $\PG(v-1,q)$ is called \emph{$t$-extendable} if there exists an $(n+t,s)$-arc $\cK'$ in $\PG(v-1,q)$ with $\cK'(P)\ge \cK(P)$ for all $P\in\cP$. 
If $\cK$ is $t$-extendable for some $t\ge 1$, we also say that $\cK$ is \emph{extendable}. Similarly, an $(n,s)$-minihyper $\cK$ in $\PG(v-1,q)$ is called \emph{reducible}, 
if there exists an $(n-1,s)$-minihyper $\cK'$ in $\PG(v-1,q)$ with $\cK'(P)\le \cK(P)$ for all $P\in\cP$. A minihyper that is not reducible is called \emph{irreducible}.

An $(n,s)$-arc $\cK$ with spectrum $(a_i)$ is \emph{divisible with divisor $\Delta$} if $a_i=0$ for all $i \neq  n \pmod \Delta$ and $\Delta>1$. For the corresponding linear 
code the condition says that the weights of all codewords are divisible by $\Delta$. More generally, an $(n,s)$-arc $\cK$ with $s\equiv n+t\pmod \Delta$ is called 
\emph{$t$-quasi-divisible with divisor $\Delta$} if $a_i=0$ for all $i \not\equiv n,n+1,\dots, n+t \pmod \Delta$ and $1\le t\le q-1$. An arc $\cK$ in $\PG(v-1,q)$ 
is called a \emph{$(t\mod q)$-arc}, where $1\le t\le q-1$, if $\cK(L)\equiv t\pmod q$ for every line $L$. By double-counting one easily sees that also $\cK(S)\equiv t\pmod q$  
is satisfied for every subspace $S$ of larger dimension. If the maximum point multiplicity of $\cK$ is at most $t$, i.e., $\cK(P)\le t$ for all $P\in\cP$, then we call 
$\cK$ a \emph{strong $(t\mod q)$-arc} noting that some papers use the notion of $(t \mod q)$-arcs for strong $(t \mod q)$-arcs. Note that increasing the point multiplicities 
of arbitrary points by multiples of $q$ preserves the property of being a $(t\mod q)$-arc.

Let $\cK$ be an arc and $\sigma\colon\N_0\to\mathbb{R}$ be a function satisfying $\sigma(\cK(H))\in\N_0$ for every hyperplane $H\in\cH$. The arc 
$\cK^\sigma\colon\cH\to\N_0$, $H\mapsto\sigma(\cK(H))$ is called the \emph{$\sigma$-dual} of $\cK$. The roles of points and hyperplanes have to be interchanged.  
Note that taking $\sigma$ as the identity function on $\N_0$ gives the dual arc $\cK^\perp$. If $\sigma$ is linear, then the parameters of $\cK^\sigma$ can be easily 
computed from the parameters of $\cK$, see e.g.\ \cite{brouwer1997correspondence}. For a $t$-quasi-divisible arc a special $\sigma$-dual arc is of importance. Let $\cK$ 
be a $t$-quasi-divisible $(n,s)$-arc with divisor $q$ in $\PG(v-1,q)$, where $1\le t<q$. By $\widetilde{\cK}$ we denote the $\sigma$-dual of $\cK$ in the dual geometry 
$\PG^\perp(k-1,q)$, where $\sigma(x)=n+t-x \mod q$. More precisely, we have $\widetilde{\cK}\colon \cH\to\{0,1,\dots,t\}$, 
\begin{equation}
  \label{eq_quasidivisible_dual}
  H\mapsto \widetilde{\cK}(H)\equiv n+t-\cK(H) \pmod q.
\end{equation}  
In other words, hyperplanes of multiplicity congruent to $n+a \pmod q$ become $(t-a)$-points in the dual geometry. In particular, $s$-hyperplanes become $0$-points with 
respect to $\widetilde{\cK}$. In general, the cardinality of $\widetilde{\cK}$ cannot be obtained from the parameters of $\cK$.

Defining the sum of two multisets $\cK$ and $\cK'$ in the same geometry by $(\cK+\cK')(P)=\cK(P)+\cK'(P)$ for all $P\in \cP$, the following
theorem is straightforward.
\begin{Theorem}(E.g.\ \cite[Theorem 1]{landjev2016extendability})
  \label{thm_quasidivisible_extendability}
  Let $\cK$ be an $(n,s)$-arc in $\PG(v-1,q)$, which is $t$-quasi-divisible with divisor $q$, where $1\le t<q$. Let $\widetilde{\cK}$ defined 
  by Equation~(\ref{eq_quasidivisible_dual}). If 
  \begin{equation}
    \widetilde{\cK}= \sum_{i=1}^c \chi_{\widetilde{P}_i}+\cK'
  \end{equation}
  for some multiset $\cK'$ in the dual geometry $\PG^\perp(v-1,q)$ and $c$ not necessarily different hyperplanes $\widetilde{P}_1,\dots,\widetilde{P}_c$ 
  in $\PG^\perp(k-1,q)$, then $\cK$ is $c$-extendable. In particular, if $\widetilde{\cK}$ contains a hyperplane in its support, then 
  $\cK$ is extendable.
\end{Theorem}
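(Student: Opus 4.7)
The plan is to construct the extension explicitly via the point--hyperplane duality. Under this duality each hyperplane $\widetilde{P}_i$ of $\PG^\perp(v-1,q)$ corresponds to a unique point $P_i$ of $\PG(v-1,q)$, and a dual point $H\in\cH$, i.e., a hyperplane of $\PG(v-1,q)$, belongs to $\widetilde{P}_i$ precisely when $P_i\in H$; hence $\chi_{\widetilde{P}_i}(H)=1$ iff $P_i\in H$.

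The core step is the arithmetic inequality $\cK(H)+\widetilde{\cK}(H)\le s$ for every $H\in\cH$. Since $s\equiv n+t\pmod q$, the defining congruence~(\ref{eq_quasidivisible_dual}) yields $\widetilde{\cK}(H)\equiv s-\cK(H)\pmod q$, and by construction $\widetilde{\cK}(H)\in\{0,1,\dots,t\}$. Because $\cK$ is an $(n,s)$-arc we also have $s-\cK(H)\ge 0$. Being a non-negative integer congruent modulo $q$ to the non-negative integer $\widetilde{\cK}(H)\in\{0,\dots,t\}\subseteq\{0,\dots,q-1\}$, the quantity $s-\cK(H)$ must in fact be at least $\widetilde{\cK}(H)$.

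Granted this, I would define the candidate extension $\cK^{\ast}:=\cK+\sum_{i=1}^{c}\chi_{\{P_i\}}$, which has cardinality $n+c$ and satisfies $\cK^{\ast}(P)\ge\cK(P)$ for every point $P\in\cP$. For an arbitrary hyperplane $H\in\cH$ the hypothesis $\widetilde{\cK}=\sum_{i=1}^{c}\chi_{\widetilde{P}_i}+\cK'$ with $\cK'\ge 0$, combined with the identification above, gives
\begin{equation*}
\bigl|\{i\in\{1,\dots,c\}\,:\,P_i\in H\}\bigr|=\sum_{i=1}^{c}\chi_{\widetilde{P}_i}(H)\le\widetilde{\cK}(H),
\end{equation*}
whence $\cK^{\ast}(H)\le\cK(H)+\widetilde{\cK}(H)\le s$ by the core inequality. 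Thus $\cK^{\ast}$ is an $(n+c,s)$-arc extending $\cK$, establishing $c$-extendability. The \emph{in particular} clause is the special case $c=1$: if some hyperplane $\widetilde{P}$ lies in $\operatorname{supp}(\widetilde{\cK})$, then $\widetilde{\cK}-\chi_{\widetilde{P}}$ is non-negative and the general statement applies.

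The only genuine obstacle I foresee is the modular step, where one must check carefully that the normalisation $\widetilde{\cK}(H)\in\{0,\dots,t\}$ with $t<q$, together with the arc bound $\cK(H)\le s$ and the congruence $s-\cK(H)\equiv \widetilde{\cK}(H)\pmod q$, really force $\widetilde{\cK}(H)\le s-\cK(H)$ (and not merely a weaker inequality coming from some other representative of the residue class). Once this is settled, the extendability is essentially automatic and the proof is short.
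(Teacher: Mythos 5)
Your proof is correct, and it is the standard argument the paper has in mind (the paper itself gives no proof, calling the theorem straightforward and citing Landjev--Rousseva--Storme): add the points $P_i$ dual to the hyperplanes $\widetilde{P}_i$, and use that $s-\cK(H)$ is a non-negative integer congruent modulo $q$ to $\widetilde{\cK}(\widetilde{H})\in\{0,\dots,t\}$ with $t<q$, hence at least $\widetilde{\cK}(\widetilde{H})$, so that $\cK^{\ast}(H)\le\cK(H)+\widetilde{\cK}(\widetilde{H})\le s$. The modular step you flag does go through exactly as you describe, and the only detail left implicit --- that $\cK^{\ast}$ attains multiplicity $s$ on some hyperplane, as required by the paper's definition of an $(n+c,s)$-arc --- is immediate since any $s$-hyperplane of $\cK$ carries none of the $P_i$.
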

Let us note that the condition of Theorem~\ref{thm_quasidivisible_extendability} is sufficient, but not necessary, since 
$0$-points in $\PG^\perp(v-1,q)$ with respect to $\widetilde{\cK}$ can correspond to hyperplanes in $\PG(v-1,q)$ that are not of 
the maximum possible multiplicity with respect to the $(n,s)$-arc $\cK$. However, in some situations $\cK(H)\equiv s\pmod q$, 
where $H\in\cH$, implies $\cK(H)=s$.

\begin{Theorem} (E.g.~\cite[Theorem 2]{landjev2016extendability})  
  \label{Theorem_quasidivisile_implies_highly_divisible}
  Let $\cK$ be an $(n,s)$-arc in $\PG(v-1,q)$ which is $t$-quasi-divisible with divisor $q$, where $1\le t<q$. For every line $\widetilde{L}$, 
  in the dual geometry $\PG^\perp(v-1,q)$ we have
  \begin{equation}
    \widetilde{\cK}\!\left(\widetilde{L}\right)\equiv t\pmod q.
  \end{equation}
\end{Theorem}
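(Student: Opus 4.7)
The plan is to translate a line in the dual geometry back to the original $\PG(v-1,q)$ and then run a standard double counting over a pencil of hyperplanes.

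First I would identify what $\widetilde{L}$ is in primal terms. A line $\widetilde{L}$ of $\PG^\perp(v-1,q)$ consists of $q+1$ points of $\PG^\perp(v-1,q)$, which correspond to the $q+1$ hyperplanes of $\PG(v-1,q)$ in the pencil through some fixed $(v-3)$-dimensional subspace $U$. By the definition of the $\sigma$-dual given in Equation~(\ref{eq_quasidivisible_dual}),
\begin{equation*}
  \widetilde{\cK}\!\left(\widetilde{L}\right) \;=\; \sum_{H\supset U} \widetilde{\cK}(H) \;\equiv\; \sum_{H\supset U}\bigl(n+t-\cK(H)\bigr) \pmod q,
\end{equation*}
where the sum is over the $q+1$ hyperplanes $H$ containing $U$.

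Next I would evaluate the two sums modulo $q$ separately. Since there are $q+1\equiv 1\pmod q$ hyperplanes in the pencil, the constant part contributes $(q+1)(n+t)\equiv n+t\pmod q$. For the other piece I would double count: every point of $U$ lies in all $q+1$ hyperplanes of the pencil, while every point outside $U$ together with $U$ spans a unique hyperplane and therefore lies in exactly one hyperplane through $U$. Hence
\begin{equation*}
  \sum_{H\supset U}\cK(H) \;=\; (q+1)\,\cK(U) + \bigl(n-\cK(U)\bigr) \;=\; n+q\,\cK(U) \;\equiv\; n\pmod q.
\end{equation*}
Combining the two contributions gives $\widetilde{\cK}(\widetilde{L})\equiv(n+t)-n\equiv t\pmod q$, which is the desired statement.

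There is really no hard step here; the only things to be careful about are (i) the dictionary between lines in $\PG^\perp(v-1,q)$ and pencils of hyperplanes through a codimension-2 subspace $U$ in $\PG(v-1,q)$, and (ii) the fact that quasi-divisibility is implicitly used only to guarantee that the formula $\widetilde{\cK}(H)\equiv n+t-\cK(H)\pmod q$ actually defines values in $\{0,1,\dots,t\}$, so that $\widetilde{\cK}$ is a genuine multiset whose sum over $\widetilde{L}$ makes sense. The congruence itself holds for any arc $\cK$.
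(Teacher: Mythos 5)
Your proof is correct. The paper itself gives no proof of this statement — it is quoted from \cite[Theorem 2]{landjev2016extendability} — but your argument (identifying a dual line with the pencil of hyperplanes through a codimension-$2$ subspace $U$, using $\sum_{H\supset U}\cK(H)=n+q\,\cK(U)$, and reducing modulo $q$) is the standard one, and your closing remark correctly isolates the only role of quasi-divisibility, namely that it makes $\widetilde{\cK}(H)\in\{0,1,\dots,t\}$ well defined.
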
  
In other words, $\widetilde{\cK}$ is a strong $(t \mod q)$-arc, c.f.~\cite[Corollary 1]{landjev2016extendability}, and Theorem~\ref{thm_quasidivisible_extendability} 
links the extendability problem to the classification problem of strong $(t\mod q)$-arcs. Note that this correspondence is not injective, i.e., different non-isomorphic 
$t$-quasi-divisible arcs can produce the same strong $(t\mod q)$-arc. The mapping $^\sim$ is also not surjective since strong $(t\mod q)$-arcs without $0$-points and 
$1\le t<q$ cannot be obtained by (\ref{eq_quasidivisible_dual}) from $t$-quasi-divisible arcs. However, it is not clear whether all strong $(t\mod q)$-arcs with $0$-points 
and $1\le t<q$ come from $t$-quasi-divisible arcs. In the remaining part of the paper we want to study $(t \mod q)$-arcs as purely geometric objects without using the relation 
to the extendability problem.

\section[Constructions and characterizations for $(t\mod q)$-arcs]{Constructions and characterizations for $\mathbf{(t\mod q)}$-arcs}
\label{sec_t_mod_q_arcs_constructions}
A few constructions for $(t \mod q)$-arcs are known. First we mention that two such arcs can be combined to a $(t \mod q)$-arc with a larger value of $t$. 
\begin{Theorem}(\cite[Theorem 4]{landjev2019divisible})
  \label{thm_sum_of_t_mod_q}
  Let $\cK$ and $\cK'$ be a $(t_1 \mod q)$- and a $(t_2 \mod q)$-arc in $\PG(v-1, q)$, respectively. Then $\cK+\cK'$ is a $(t \mod q)$-arc with 
  $t \equiv t_1 + t_2 \pmod q$. Similarly, $\alpha \cK$, where $\alpha\in\{ 0,\dots,\dots, p-1\}$ and $p$ is the characteristic of $\F_q$, is a $(t \mod q)$-arc 
  with $t\equiv \alpha t_1\pmod q$.
\end{Theorem}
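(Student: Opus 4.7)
The plan is to verify both assertions by a direct, line-by-line computation straight from the definition of a $(t\mod q)$-arc given in Section~\ref{sec_preliminaries}. Recall that the sum of two multisets is defined pointwise, $(\cK+\cK')(P)=\cK(P)+\cK'(P)$, and we agreed to extend multisets additively to subsets $\cQ\subseteq\cP$ via $\cK(\cQ)=\sum_{P\in\cQ}\cK(P)$. Consequently, the sum commutes with this extension: for any subspace, in particular any line $L$, we have $(\cK+\cK')(L)=\cK(L)+\cK'(L)$.

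With that in hand, the first claim is immediate. Fix an arbitrary line $L$ of $\PG(v-1,q)$. By hypothesis $\cK(L)\equiv t_1\pmod q$ and $\cK'(L)\equiv t_2\pmod q$, so
\begin{equation*}
  (\cK+\cK')(L) \;=\; \cK(L)+\cK'(L) \;\equiv\; t_1+t_2 \;\equiv\; t \pmod q.
\end{equation*}
Since $L$ was arbitrary, $\cK+\cK'$ satisfies the defining congruence and is therefore a $(t\mod q)$-arc.

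The second assertion is handled analogously. The multiset $\alpha\cK$ is the pointwise scalar multiple $(\alpha\cK)(P)=\alpha\cK(P)$, which again extends additively so that $(\alpha\cK)(L)=\alpha\cK(L)$ for every line $L$. Then
\begin{equation*}
  (\alpha\cK)(L) \;=\; \alpha\cK(L) \;\equiv\; \alpha t_1 \pmod q,
\end{equation*}
establishing that $\alpha\cK$ is a $(t\mod q)$-arc with $t\equiv \alpha t_1\pmod q$.

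There is essentially no obstacle here: the statement is a tautological consequence of the pointwise definition of multiset addition/scalar multiplication combined with the modular hypothesis on each line. The only two small items to keep in mind while writing the argument are, first, that the scalar $\alpha$ must be a nonnegative integer (guaranteed by $\alpha\in\{0,\dots,p-1\}$) so that $\alpha\cK$ is again a $\mathbb{N}_0$-valued multiset, and second, that we are implicitly allowing the residue class $t$ (respectively $\alpha t_1$) to be any element of $\{0,\dots,q-1\}$ — the degenerate case $t\equiv 0\pmod q$ simply yields a $q$-divisible arc, which fits the same framework.
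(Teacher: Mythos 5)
Your proof is correct; the paper itself gives no proof of this statement (it is quoted from \cite[Theorem 4]{landjev2019divisible}), and your direct line-by-line verification using the additivity of multisets over subsets is exactly the standard argument one would expect there. Nothing is missing.
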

When $t = 0$ and $q=p$ is a prime then Theorem~\ref{thm_sum_of_t_mod_q} directly implies the following nice characterization.
\begin{Corollary}(\cite[Corollary 1]{landjev2019divisible})
  Let $\cK$ and $\cK'$ be $(0 \mod p)$-arcs in $\PG(v-1, p)$, where $p$ is a prime. Then $\cK+\cK'$ and and $\alpha\cK$, where $\alpha\in\{0,\dots,p-1\}$, are also 
  $(0 \mod p)$-arcs. In particular, the set of all $(0 \mod p)$-arcs in $PG(v-1, p)$ is a vector space over $\F_p$.
\end{Corollary}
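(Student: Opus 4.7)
The corollary asserts two things: first, closure of the class of $(0\bmod p)$-arcs under the pointwise sum and under scalar multiplication by an element of $\{0,\ldots,p-1\}$; second, that the resulting structure is in fact an $\F_p$-vector space. The plan is that (a) is an immediate specialisation of the preceding theorem, and (b) reduces to recognising that the ambient set of multiplicity functions $\cP\to\N_0$ modulo $p$ already carries a vector-space structure which restricts to our class.

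For (a) I would simply invoke Theorem~\ref{thm_sum_of_t_mod_q} with $t_1=t_2=0$ and $q=p$: then $\cK+\cK'$ is a $(0+0\bmod p)$-arc, and $\alpha\cK$ is an $(\alpha\cdot 0\bmod p)$-arc, for every $\alpha\in\{0,\ldots,p-1\}$. This is exactly the first assertion of the corollary, so at this step nothing extra needs to be checked.

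For (b), the key move is to identify each $(0\bmod p)$-arc $\cK\colon\cP\to\N_0$ with its pointwise reduction $\overline{\cK}\colon\cP\to\F_p$; this is legitimate because adding an integer multiple of $p$ to any point multiplicity preserves the defining line-condition $\cK(L)\equiv 0\pmod p$. Under this identification, the operations appearing in the corollary become the natural coordinatewise $\F_p$-module operations on $\F_p^{\cP}$, so commutativity, associativity, distributivity, and the existence of a zero element (the zero multiset) are inherited for free from $\F_p^{\cP}$. Closure under addition and under scaling by an element of $\F_p$ is the content of (a). The only axiom that still needs a word is the existence of additive inverses: for a given $(0\bmod p)$-arc $\cK$, the arc $(p-1)\cK$ is a $(0\bmod p)$-arc by (a), and $\cK+(p-1)\cK=p\cK$ has every point-multiplicity divisible by $p$, hence represents the zero element in $\F_p^{\cP}$.

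I do not foresee a real obstacle here; the argument is a direct corollary plus a bookkeeping step. The only mildly subtle conceptual point, and the one I would flag when writing it out, is that the phrase ``vector space'' is most naturally read after passing from multisets to their mod-$p$ reductions, since $\N_0$-valued functions do not literally form an $\F_p$-module. Once this identification is made, Theorem~\ref{thm_sum_of_t_mod_q} carries the whole argument.
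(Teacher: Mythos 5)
Your proposal is correct and follows essentially the same route as the paper, which simply observes that the corollary is a direct specialisation of Theorem~\ref{thm_sum_of_t_mod_q} to $t_1=t_2=0$ and $q=p$. Your additional remark that the vector-space structure is properly read on the pointwise mod-$p$ reductions (since $\N_0$-valued multisets do not literally form an $\F_p$-module) is a sensible clarification the paper leaves implicit.
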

In \cite[Theorem 7]{landjev2019divisible} the authors show that the vector space of all $(0\mod p)$-arcs in $\PG(v-1,p)$ is generated by the complements of hyperplanes. Of course 
the only strong $(0\mod q)$-arc $\cK$ is the empty arc with $\#\cK=0$. It is an easy exercise to show that strong $(1\mod q)$-arcs in $\PG(v-1,q)$ are either given by 
$\chi_{\cP}$ or $\chi_H$, where $H\in\cH$ is an arbitrary hyperplane.

The so-called \emph{lifting construction} is given by:
\begin{Theorem}(\cite[Theorem 2]{rousseva2015structure})
\label{lifted_arcs_construction}
  Let $\cK_0$ be a $(t\mod q)$-arc in a hyperplane $H\cong \PG(v-2,q)$ of $\PG(v-1,q)$, where $v\ge 2$. For a fixed point 
  $P$ in $\PG(v-1,q)$, not incident with $H$, we define an arc $\cK$ in $\PG(v-1,q)$ as follows: 
  \begin{itemize}
    \item $\cK(P)=t$;
    \item for each point $Q\neq P$ in $\cP$ we set $\cK(Q)=\cK_0(R)$, where $R=\langle P,Q\rangle\cap H$. 
  \end{itemize}  
  Then, $\cK$ is a $(t\mod q)$-arc in $\PG(v-1,q)$ of cardinality $q\cdot \#\cK_0+t$. If $\cK_0$ is strong, so is $\cK$.
\end{Theorem}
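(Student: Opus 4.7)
The plan is to verify the line-congruence condition by splitting on whether the line contains the special point $P$ or not, then compute the cardinality by partitioning $\cP\setminus\{P\}$ according to projection onto $H$, and finally observe that the strong property follows immediately from the construction.

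First, I would let $L$ be an arbitrary line of $\PG(v-1,q)$. If $P\in L$, then because $P\notin H$ the intersection $L\cap H$ is a single point $R$, and the $q$ remaining points $Q$ of $L$ all satisfy $\langle P,Q\rangle=L$, hence project through $P$ to the same point $R$. This gives
\begin{equation*}
\cK(L)=\cK(P)+q\cdot\cK_0(R)=t+q\cdot\cK_0(R)\equiv t\pmod q.
\end{equation*}
If $P\notin L$, then $\langle P,L\rangle$ is a plane meeting $H$ in a line $L_0$, and projection from $P$ restricts to a bijection $L\to L_0$ (both are sets of $q+1$ points, and the projection is injective since $P\notin L$). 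Writing $R_Q:=\langle P,Q\rangle\cap H$ for $Q\in L$, this bijection yields
\begin{equation*}
\cK(L)=\sum_{Q\in L}\cK_0(R_Q)=\sum_{R\in L_0}\cK_0(R)=\cK_0(L_0)\equiv t\pmod q,
\end{equation*}
using the hypothesis that $\cK_0$ is a $(t\mod q)$-arc in $H$.

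For the cardinality, I would partition $\cP\setminus\{P\}$ by projection onto $H$: for each $R\in H$, exactly the $q$ points on $\langle P,R\rangle\setminus\{P\}$ (namely $R$ itself together with the $q-1$ points of that line lying outside $H$) project to $R$, and by construction each is assigned multiplicity $\cK_0(R)$. Summing gives
\begin{equation*}
\#\cK=\cK(P)+\sum_{R\in H} q\cdot\cK_0(R)=t+q\cdot\#\cK_0.
\end{equation*}
Finally, if $\cK_0$ is strong then $\cK_0(R)\le t$ for every $R\in H$, so $\cK(Q)\le t$ for every $Q\ne P$, while $\cK(P)=t$ by definition; hence $\cK$ is strong.

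The argument is completely routine, and the only point requiring care is recognising in the case $P\notin L$ that projection from $P$ gives a bijection $L\to L_0$, so that the multiplicities along $L$ sum to exactly $\cK_0(L_0)$. I do not anticipate any substantial obstacle.
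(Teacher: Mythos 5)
Your proof is correct. Note that the paper itself gives no proof of this statement; it is quoted from \cite[Theorem 2]{rousseva2015structure}, so there is nothing internal to compare against. Your argument is the standard one: the case split on whether $P\in L$, the observation that for $P\notin L$ the projection from $P$ is a bijection $L\to L_0=\langle P,L\rangle\cap H$ (which also silently and correctly covers lines $L\subseteq H$, where $L_0=L$), the fibration of $\cP\setminus\{P\}$ into the $q$-point sets $\langle P,R\rangle\setminus\{P\}$ for the cardinality count, and the immediate inheritance of the bound $\cK(Q)\le t$. No gaps.
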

We call the $(t\mod q)$-arcs obtained from Theorem~\ref{lifted_arcs_construction} \emph{lifted arcs} and the point $P$ the 
\emph{lifting point}. It is possible that a lifted arc can be obtained from several different lifting points.
\begin{Lemma}(\cite[Lemma 1]{landjev2019divisible})
  \label{lemma_lifting_points_subspace}
  Let $\cK$ be a lifted arc. If $P,Q$ are lifting points for $\cK$, then any point on the line $\langle P,Q\rangle$ is a lifting point. In particular, the 
  lifting points of $\cK$ form a subspace.
\end{Lemma}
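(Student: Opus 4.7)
The plan is to first replace the definition of lifting point given in Theorem~\ref{lifted_arcs_construction} with an internal geometric characterization that does not reference any auxiliary hyperplane: a point $P$ is a lifting point of $\cK$ if and only if $\cK(P)=t$ and, for every line $L$ through $P$, the function $\cK$ is constant on $L\setminus\{P\}$. The forward implication is immediate, since the point $\langle P,Q\rangle\cap H$ depends only on the line $\langle P,Q\rangle$ once $P$ and $H$ are fixed. For the converse, pick any hyperplane $H$ with $P\notin H$ and define $\cK_0(R):=\cK(Q)$ for any $Q\in\langle P,R\rangle\setminus\{P\}$; this is well-defined by the constancy, it agrees with $\cK$ on $H$ (take $Q=R$), and $\cK_0$ is a $(t\mod q)$-arc on $H$ because for every line $L_0\subset H$ we have $\cK_0(L_0)=\cK(L_0)\equiv t\pmod q$.

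With this characterization in hand, it suffices to show that every $R\in\langle P,Q\rangle$ satisfies (i) $\cK(R)=t$ and (ii) $\cK$ is constant on $L\setminus\{R\}$ for every line $L$ through $R$, for then the set of lifting points is closed under the line-span operation and hence is a projective subspace. Assuming $R\notin\{P,Q\}$, condition (i) is immediate: $R$ lies on the line $\langle P,Q\rangle$ through $P$, on which $\cK$ is constant off $P$ with value $\cK(Q)=t$. For (ii), the case $L=\langle P,Q\rangle$ is already covered, so assume $L\neq\langle P,Q\rangle$; then $P,Q\notin L$, and $\pi:=\langle P,L\rangle$ is a projective plane containing $P$, $Q$, $R$, and $L$.

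Fix $S_1,S_2\in L\setminus\{R\}$; since $L\cap\langle P,Q\rangle=\{R\}$, neither $S_i$ lies on $\langle P,Q\rangle$. In $\pi$ define $T:=\langle P,S_1\rangle\cap\langle Q,S_2\rangle$, which is a single point by the axioms of the projective plane (the two lines are distinct, since coincidence would force $\{P,Q\}\subset\langle P,S_1\rangle$ and hence $S_1\in\langle P,Q\rangle$). A short check using $S_1,S_2\notin\langle P,Q\rangle$ shows $T\neq P$ and $T\neq Q$. Applying the constancy property at $P$ to the line $\langle P,S_1\rangle$ gives $\cK(T)=\cK(S_1)$, and applying it at $Q$ to the line $\langle Q,S_2\rangle$ gives $\cK(T)=\cK(S_2)$; hence $\cK(S_1)=\cK(S_2)$, establishing (ii). The main conceptual obstacle is securing the intrinsic characterization in the first paragraph, since without it one has to track potentially different choices of hyperplane $H$ and base arc $\cK_0$ as the lifting point varies; once that reformulation is available, the planar ``double cone'' argument above is routine apart from checking that $T$ avoids the vertices $P$ and $Q$.
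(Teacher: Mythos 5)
The paper does not prove this lemma itself --- it is quoted with a citation to \cite{landjev2019divisible} --- so there is no in-paper argument to compare against. Your proof is correct and self-contained. The key move, replacing the external definition of a lifting point by the intrinsic characterization ``$\cK(P)=t$ and $\cK$ is constant on $L\setminus\{P\}$ for every line $L$ through $P$'', is exactly the right reformulation, and both directions of that equivalence are argued properly (the converse legitimately uses that $\cK$ is already a $(t\bmod q)$-arc, which holds since $\cK$ is lifted, so $\cK_0=\cK|_H$ is again a $(t\bmod q)$-arc). The planar step is also sound: for $S_1,S_2\in L\setminus\{R\}$ the lines $\langle P,S_1\rangle$ and $\langle Q,S_2\rangle$ are distinct lines of the plane $\pi=\langle P,L\rangle$, their intersection point $T$ is correctly shown to avoid $P$ and $Q$, and the two applications of constancy give $\cK(S_1)=\cK(T)=\cK(S_2)$; the degenerate possibilities ($T\in L$, forcing $T=S_1=S_2$, or $L=\langle P,Q\rangle$) are harmless and handled. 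This is essentially the standard argument one would expect for this statement, so I see no gap.
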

The characteristic function of a hyperplane is indeed a lifted arc and for quite some time the only known strong $(t \mod q)$-arcs in $\PG(v-1,q)$, where $v\ge 4$, 
were lifted arcs. We present three non-lifted $(3 \mod 5)$-arcs in $\PG(3,5)$ in Section~\ref{section_strong_3_mod_5_in_pg_3_5}.
\begin{Theorem}(\cite[Theorem 9]{landjev2019divisible})
  \label{thm_everything_is_lifted}
  Let $\cK$ be a $(t \mod q)$-arc in $\PG(v-1,q)$ such that the restriction $\cK|_H$ to every hyperplane $H\in\cH$ is lifted. Then $\cK$ itself is a lifted arc.
\end{Theorem}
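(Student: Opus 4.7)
Plan. The plan is to exhibit a global lifting point for $\cK$ by promoting one obtained inside a single hyperplane. Fix any hyperplane $H_0\in\cH$ and, using the hypothesis, choose a lifting point $P_0\in H_0$ of $\cK|_{H_0}$, so $\cK(P_0)=t$. I claim that $P_0$ is a lifting point of $\cK$. Since $\cK(P_0)=t$ already holds, the remaining task is to verify that for every line $L$ through $P_0$ the multiplicity $\cK$ is constant on $L\setminus\{P_0\}$. Lines $L\subseteq H_0$ through $P_0$ inherit the constancy from the lifted structure of $\cK|_{H_0}$, so only lines $L\ni P_0$ with $L\not\subseteq H_0$ need attention.

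For such an $L$, I would pick any hyperplane $H$ of $\PG(v-1,q)$ containing $L$ (necessarily $H\ne H_0$ and $P_0\in H$) and reduce to showing that $P_0$ is a lifting point of $\cK|_H$. The hypothesis gives some lifting point $P_H\in H$ of $\cK|_H$. The two easy cases $P_H=P_0$ and $P_H\in L$ are immediate: the lifted structure of $\cK|_H$ at $P_H$ forces $L\setminus\{P_H\}$ to have constant multiplicity, and the presence of the $t$-point $P_0$ on this set pins that constant to $t$, which in turn yields constancy on $L\setminus\{P_0\}$. The substantive case is $P_H\ne P_0$ and $P_H\notin L$; here the same reasoning applied to the line $M=\langle P_0,P_H\rangle\subseteq H$ shows that every point of $M$ has multiplicity $t$.

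The heart of the argument is then to close the substantive case by upgrading $P_0$ from a $t$-point of $H$ to a lifting point of $\cK|_H$, which I would do by induction on $v$. The base case $v=3$ is a direct rigidity statement: $H$ is a line, a lifted arc on a line has at most one $t$-point unless it is identically $t$, so the existence of two distinct $t$-points $P_0,P_H\in H$ forces $\cK|_H\equiv t$, and then $P_0$ is trivially a lifting point. For $v\ge 4$ I would first verify that $\cK|_H$, viewed in $H\cong\PG(v-2,q)$, satisfies the theorem's hypothesis inside $H$: for every hyperplane $S$ of $H$, the restriction $\cK|_S$ must be lifted, which I would extract from the hypothesis applied to auxiliary hyperplanes $\tilde H\ne H$ of $\PG(v-1,q)$ containing $S$, using Lemma~\ref{lemma_lifting_points_subspace} to place a lifting point of $\cK|_{\tilde H}$ inside $S$. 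The inductive hypothesis then makes $\cK|_H$ lifted with some lifting-point subspace $\Lambda_H$, and a final application of Lemma~\ref{lemma_lifting_points_subspace} together with the all-$t$-point line $\langle P_0,P_H\rangle$ forces $P_0\in\Lambda_H$, completing the loop. The main obstacle is precisely this descent step, since it requires controlling, across the entire pencil of hyperplanes through a codimension-two subspace $S$, whether some lifting-point subspace intersects $S$; this is exactly where the subspace structure from Lemma~\ref{lemma_lifting_points_subspace} and the divisibility condition $\cK(\cdot)\equiv t\pmod q$ on small subspaces inherent to $(t\mod q)$-arcs play the decisive role.
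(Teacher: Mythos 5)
The paper itself does not prove this statement --- it is quoted verbatim from \cite[Theorem 9]{landjev2019divisible} --- so there is no in-paper argument to compare yours against; I can only assess your proposal on its own terms, and it has two genuine gaps, one of which is fatal to the whole strategy. First, your opening claim that an \emph{arbitrary} lifting point $P_0$ of $\cK|_{H_0}$ for an \emph{arbitrary} hyperplane $H_0$ is automatically a lifting point of $\cK$ is false, so no correct argument can establish it. Take $\cK_\pi$ in $\PG(2,5)$ to be the lift from a point $P_H$ of the line-arc $(3,2,1,0,1,1)$ on a line $\ell_0$, where $R\in\ell_0$ is the $3$-point; the line $M=\langle P_H,R\rangle$ is then an all-$3$ line, but a point $P_0\in M\setminus\{P_H,R\}$ is not a lifting point of $\cK_\pi$, since any other line $m$ through $P_0$ carries the non-constant values $\{2,1,0,1,1\}$ on $m\setminus\{P_0\}$. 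Now lift $\cK_\pi$ from a point $P^*$ to $\PG(3,5)$: every plane restriction is lifted (planes through $P^*$ from $P^*$, the others being projectively equivalent to $\cK_\pi$), the plane $H_0=\langle P^*,M\rangle$ carries the constant arc $\cK|_{H_0}\equiv 3$, so \emph{every} point of $H_0$, in particular $P_0$, is a lifting point of $\cK|_{H_0}$ --- yet $P_0$ is not a lifting point of $\cK$, as witnessed by the line $m$ above inside the base plane. Any proof along your lines must therefore make a careful choice of $(H_0,P_0)$ or argue globally; ``fix any'' does not work.

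The same plane arc $\cK_\pi$ refutes the step you call ``completing the loop'': you infer $P_0\in\Lambda_H$ from Lemma~\ref{lemma_lifting_points_subspace} plus the fact that $\langle P_0,P_H\rangle$ is an all-$t$ line, but Lemma~\ref{lemma_lifting_points_subspace} needs \emph{two} lifting points to propagate along a line, and an all-$t$ line through a single lifting point need not consist of lifting points ($M$ above passes through the unique lifting point $P_H$ of $\cK_\pi$ and is all-$3$, yet its other points are not lifting points). Finally, the ``descent step'' --- that $\cK|_S$ is lifted for every codimension-two subspace $S$ --- is exactly where you admit the difficulty lies, and your proposed mechanism (placing a lifting point of some $\cK|_{\tilde H}$ inside $S$) is not justified: the lifting-point subspace $\Lambda_{\tilde H}$ may be a single point off $S$ for every hyperplane $\tilde H\supseteq S$, and nothing you invoke rules this out. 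So the proposal identifies the right reduction (constancy on punctured lines through a candidate point) but the candidate is chosen wrongly, the key upgrade step rests on a false use of Lemma~\ref{lemma_lifting_points_subspace}, and the acknowledged main obstacle is left open.
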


In $\PG(1,q)$ $(t\mod q)$-arcs $\cK$ are very numerous and have little structure, i.e., the only condition is $n\equiv t\pmod q$. For strong $(t\mod q)$-arcs  
in $\PG(1,q)$ additionally the maximum point multiplicity is upper bounded by $t$. For strong $(t\mod q)$-arcs in $\PG(2,q)$ we have the following characterization. 
\begin{Theorem}(\cite[Theorem 10]{landjev2019divisible})
  \label{thm_pg_2_q}
  A strong $(t\mod q)$-arc $\cK$ in $\PG(2,q)$ of cardinality $mq+t$ exists if and only if there exists an $((m-t)q+m,\ge m-t)$-minihyper $\cB$ 
  with line multiplicities contained in $\{m-t,m-t+1,\dots,m\}$.
\end{Theorem}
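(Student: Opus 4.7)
The plan is to exhibit a bijection via the standard point--line duality of $\PG(2,q)$. In the forward direction, let $\cK$ be a strong $(t\mod q)$-arc of cardinality $mq+t$. Since every line multiplicity satisfies $\cK(L)\equiv t\pmod q$ and $1\le t<q$ forces $\cK(L)\ge t\ge 0$, I would write $\cK(L)=t+q\cdot f(L)$ with $f(L)\in\N_0$, and define $\cB$ in the dual plane by setting $\cB(\widetilde L):=f(L)$ for each original line $L$ (regarded as a point of the dual plane).

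The core calculation is a single double-counting of point/line incidences at a fixed point $P$: using $\#\{L\ni P\}=q+1$ one gets
\begin{equation}
  \sum_{L\ni P}\cK(L)\,=\,\cK(\cP)+q\cK(P)\,=\,mq+t+q\cK(P),
\end{equation}
and substituting $\cK(L)=t+qf(L)$ yields $\sum_{L\ni P}f(L)=m-t+\cK(P)$. Since this left-hand sum is precisely the multiplicity of the dual line $\widetilde P$ under $\cB$, and $\cK$ strong gives $\cK(P)\in\{0,\dots,t\}$, the line multiplicities of $\cB$ lie in $\{m-t,m-t+1,\dots,m\}$. A short sum $\sum_L\cK(L)=(q+1)(mq+t)$ then gives $\#\cB=(m-t)q+m$, so $\cB$ is the required minihyper.

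For the converse I would run the same construction in reverse. Given a minihyper $\cB$ of cardinality $(m-t)q+m$ in some $\PG(2,q)$ with all line multiplicities in $\{m-t,\dots,m\}$, define $\cK$ on the dual plane by $\cK(\widetilde L):=\cB(L)-(m-t)$. The two-sided bound on $\cB(L)$ immediately gives $0\le \cK(\widetilde L)\le t$, so the resulting arc is strong with maximum point multiplicity at most $t$. For the congruence of line multiplicities I would invoke the dual double-counting identity
\begin{equation}
  \sum_{L\ni P}\cB(L)\,=\,q\cB(P)+\#\cB,
\end{equation}
which yields $\cK(\widetilde P)=q\cB(P)+t\equiv t\pmod q$, and a final sum gives $\#\cK=mq+t$.

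The main obstacle is really just bookkeeping rather than a substantive difficulty: one must keep careful track of which duality identifications are being used (points of the original versus lines of the dual and vice versa) and verify that the boundary constraints on $\cK$ (nonnegativity, strongness) translate precisely to the boundary constraints on $\cB$ (line multiplicities $\ge m-t$ and $\le m$). Once this bookkeeping is in place, both directions reduce to the single incidence identity above.
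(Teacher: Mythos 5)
Your proposal is correct: both directions check out, including the cardinality computations $\#\cB=(m-t)q+m$ and $\#\cK=mq+t$ and the translation of strongness into the two-sided bound on line multiplicities. The paper itself states this theorem only with a citation to \cite[Theorem 10]{landjev2019divisible} and gives no proof, but your duality argument is precisely the standard proof of that result, so there is nothing further to compare.
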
  
For strong $(2\mod q)$-arcs in $\PG(2,q)$ we can say a bit more.
\begin{Lemma}
  \label{lemma_2mod5_arcs_nonnegative_solutions}
  Let $\cK$ be a $q$-divisible arc in $\PG(2,q)$ whose cardinality $n$ is congruent to $2$ modulo $q$ and whose maximum point multiplicity is at most $2$. 
  For $q\ge 5$ we have one of the following possibilities:
  \begin{enumerate}
    \item[(1)] $n=2q+2$, $a_{2}=q^2+q-1$, $a_{q+2}=2$, $a_{2q+2}=0$, $\lambda_0=q(q-1)$, $\lambda_1=2q$, $\lambda_2=1$;
    \item[(2)] $n=2q+2$, $a_{2}=q(q+1)$, $a_{q+2}=0$, $a_{2q+2}=1$, $\lambda_0=q^2$, $\lambda_1=0$, $\lambda_2=q+1$;
    \item[(3)] $n=q^2+q+2$, $a_{2}=q$, $a_{q+2}=q^2+1$, $a_{2q+2}=0$, $\lambda_0=q(q-1)/2$, $\lambda_1=2q$, $\lambda_2=1+q(q-1)/2$;
    \item[(4)] $n=q^2+q+2$, $a_{2}=q+1$, $a_{q+2}=q^2-1$, $a_{2q+2}=1$, $\lambda_0=q(q+1)/2$, $\lambda_1=0$, $\lambda_2=1+q(q+1)/2$;
    \item[(5)] $n=q^2+2q+2$, $a_{2}=i$, $a_{q+2}=q^2+q-2i$, $a_{2q+2}=i+1$, $\lambda_0=iq$, $\lambda_1=q^2 -2iq$, $\lambda_2=1+q(i+1)$, 
               where $0\le i\le \left\lfloor\tfrac{q}{2}\right\rfloor$;
    \item[(6)] $n=(q+1)(q+2)$, $a_{2}=0$, $a_{q+2}=q^2-1$, $a_{2q+2}=q+2$, $\lambda_0=q(q-1)/2$, $\lambda_1=0$, $\lambda_2=(q+1)(q+2)/2$;
    \item[(7)] $n=2\left(q^2+q+1\right)$, $a_{2}=0$, $a_{q+2}=0$, $a_{2q+2}=q^2+q+1$, $\lambda_0=0$, $\lambda_1=0$, $\lambda_2=q^2+q+1$.
  \end{enumerate}  
\end{Lemma}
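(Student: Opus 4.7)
The plan is to first pin down the possible line multiplicities, then parameterize the spectrum via the standard equations to reduce the problem to a one-parameter family for each candidate cardinality $n$, and finally use non-negativity of the spectrum and of the $\lambda_i$ to carve out the seven tabulated cases.

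As a first step, I would observe that every line carries at most $2(q+1)=2q+2$ units of multiplicity (since $\cK(P)\le 2$ for every point $P$), and that $\cK(L)\equiv 2\pmod q$ then forces $\cK(L)\in\{2,\,q+2,\,2q+2\}$. Writing $n=mq+2$ with $m\in\N_0$, equations~(\ref{eq_standard_equation_1}) and~(\ref{eq_standard_equation_2}) become two linear equations in $a_2,a_{q+2},a_{2q+2}$; taking $i:=a_{2q+2}$ as the free parameter gives
\[
a_{q+2}=(q+1)m-2q-2i,\qquad a_2=q^2+3q+1-(q+1)m+i.
\]
Feeding this into~(\ref{eq_standard_equation_3}) and combining with $\lambda_0+\lambda_1+\lambda_2=q^2+q+1$ and $\lambda_1+2\lambda_2=n$ determines $\lambda_2$ as an affine function of $m$ and $i$, and hence $\lambda_0$ and $\lambda_1$ as well; a convenient form of $\lambda_1$ is
\[
\lambda_1=q\bigl[(m-2)(m-q-1)+2-2i\bigr].
\]

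Next, I would impose non-negativity. The condition $\lambda_1\ge 0$ reads $2i\le (m-2)(m-q-1)+2$; for $3\le m\le q$ one has $(m-2)(q+1-m)\ge q-2>2$ since $q\ge 5$, so the right-hand side is negative and no $i\ge 0$ works, ruling out these $m$. The bound $a_{q+2}\ge 0$ kills $m\in\{0,1\}$. For $m\ge q+2$, the two lower bounds on $i$ coming from $a_2\ge 0$ and $\lambda_0\ge 0$ differ by $-(m-q-2)(m-2q-2)/2\ge 0$, so the binding lower bound is $i\ge m(q+1)-(q^2+3q+1)$. Combining this with the upper bound from $\lambda_1\ge 0$ yields the feasibility inequality
\[
(m-q-3)(m-2q-2)\ge 0,
\]
which in the range $m\le 2q+2$ forces $m\le q+3$ or $m=2q+2$.

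Thus the only surviving values are $m\in\{2,\,q+1,\,q+2,\,q+3,\,2q+2\}$. For each, the admissible range of $i$ is short: $\{0,1\}$ for $m\in\{2,q+1\}$, an interval of length $\lfloor q/2\rfloor$ for $m=q+2$, and a single value for $m\in\{q+3,2q+2\}$. Substituting each admissible $(m,i)$ into the closed forms for $a_2,a_{q+2},a_{2q+2},\lambda_0,\lambda_1,\lambda_2$ reproduces cases~(1)--(7) of the lemma, with a cosmetic relabeling $i\mapsto i-1$ needed to match the parameterization in case~(5). The main obstacle is managing the three competing linear inequalities on $i$; once the factorizations $(m-2)(m-q-1)+2$, $(m-q-2)(m-2q-2)$, and $(m-q-3)(m-2q-2)$ are extracted, the infeasible ranges of $m$ become transparent and the verification of the seven surviving cases reduces to routine substitution.
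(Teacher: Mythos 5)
Your proposal is correct and follows essentially the same route as the paper's proof: restrict the line multiplicities to $\{2,q+2,2q+2\}$, solve the standard equations with $a_{2q+2}$ as the free parameter, and eliminate all but the listed cases via non-negativity of $a_2$, $a_{q+2}$, $\lambda_0$, $\lambda_1$. Your reparameterization by $m$ with the factored quadratics $(m-2)(m-q-1)+2$ and $(m-q-3)(m-2q-2)$ is a tidier bookkeeping of the same case analysis (the paper instead treats the special values of $n$ one by one and then bounds $n$ via the discriminant of the quadratic for $\lambda_1$), and I verified your formulas for $a_2$, $a_{q+2}$, $\lambda_1$ and the bound comparison $-(m-q-2)(m-2q-2)/2$ are all exact.
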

\begin{proof}
  Solving the standard equations for the hyperplanes and points 
  \begin{eqnarray*}
    a_2+a_{q+2}+a_{2q+2} &=& q^2+q+1 \\ 
    2a_2+(q+2)a_{q+2}+(2q+2)a_{2q+2} &=& n(q+1)\\ 
    a_2+\frac{(q+2)(q+1)}{2}a_{q+2}+(q+1)(2q+1)a_{2q+2} &=& {n\choose 2} +q\lambda_2\\ 
    \lambda_0+\lambda_1+\lambda_2 &=& q^2+q+1\\ 
    \lambda_1+2\lambda_2 &=& n
  \end{eqnarray*}
  for $\left\{a_2,a_{q+2},\lambda_0,\lambda_1,\lambda_2\right\}$ gives
  \begin{eqnarray*}
    a_2 &=& \frac{q^3+x q-n q+3 q^2-n+3 q+2}{q}\\ 
    a_{q+2} &=& -\frac{2 x q-n q+2 q^2-n+2 q+2}{q},\\ 
    \lambda_0 &=& \frac{2xq^2 + nq^2 - n^2 + 2nq - 4q^2 + 4n - 4q - 4}{2q}\\ 
    \lambda_1 &=& -\frac{2xq^2 + nq^2 - 2q^3 - n^2 + 3nq - 6q^2 + 4n - 6q - 4}{q}\\ 
    \lambda_2 &=& \frac{2xq^2 + nq^2 - 2q^3 - n^2 + 4nq - 6q^2 + 4n - 6q - 4}{2q},
  \end{eqnarray*}  
  where we set $x:=a_{2q+2}$ as an abbreviation.
  
  First we treat a few special cases separately. If $n=2q+2$, then the above equations simplify to $a_2 = q^2 + x + q - 1$, $a_{q+2} = -2x + 2$, $\lambda_0 = xq + q^2 - q$, 
  $\lambda_1 =2q(1-x)$, and $\lambda_2 = xq + 1$. From $\lambda_1\ge 0$ and $x\in\N_0$ we conclude $x\in\{0,1\}$, which gives the cases (1) and (2). 
  In the following we assume $n\neq 2q+2$.  
  
  If $n=q^2+q+2$, then the above equations simplify to $a_2 = x + q$, $a_{q+2} = q^2 - 2x + 1$, $\lambda_0 = xq + \tfrac{1}{2}q(q-1)$, $\lambda_1 = 2q(1-x)$, and 
  $\lambda_2 = 1+xq + \tfrac{1}{2}q(q-1)$. From $\lambda_1\ge 0$ and $x\in\N_0$ we conclude $x\in\{0,1\}$, which gives the cases (3) and (4).  
  In the following we assume $n\neq q^2+q+2$. 
  
  If $n=q^2+2q+2$, then the above equations simplify to $a_2 = x - 1$, $a_{q+2} = q^2 - 2x + q + 2$, $\lambda_0 = (x-1)q$, $\lambda_1=  q\cdot\left(q+2 -2x\right)$ , 
  and $\lambda_2 = xq + 1$. From $\lambda_0\ge 0$, $\lambda_1\ge 0$, and $x\in\N_0$ we conclude $x\in\left\{1,2,\dots,1+\left\lfloor\tfrac{q}{2}\right\rfloor\right\}$, 
  which gives the parametric case (5), where $i=x-1$. In the following we assume $n\neq q^2+2q+2$.

  If $n=q^2+3q+2$, then the above equations simplify to $a_2 = x - q - 2$, $a_{q+2} = q^2 - 2x + 2q + 3$, $\lambda_0 = xq - \tfrac{1}{2}q(q+5)$, 
  $\lambda_1 = 2q(q + 2-x)$, and $\lambda_2 = xq - \tfrac{1}{2}q(q+1)+ 1$. From $a_2\ge 0$ and $\lambda_1\ge 0$ we conclude $x=q+2$, which gives case (6). 
  In the following we assume $n\neq q^2+3q+2$. 
  
  Now we are ready to analyze the general situation. From $a_{q+2}\ge 0$ we conclude 
  $$
    n\ge \frac{2(xq + q^2 + q + 1)}{q + 1}\ge \frac{2(q^2+q+1)}{q+1}>2q,
  $$  
  so that $n\equiv 2\pmod q$ and $n\neq 2+2q$ implies $n\ge 2+3q$. The non-negativity of $\lambda_1$ gives
  $$
    n\le 2 +q\cdot\frac{q+3 - \sqrt{q^2\!-\!2q\!-\!7\!+\!8x}}{2} =2 +q\cdot\frac{q+3 - \sqrt{(q\!-\!3)^2\!+\!4(q\!-\!4)\!+\!8x}}{2} 
    \,\,\underset{q\ge 5}{\overset{x\ge 0}{<}}\,\, 3q+2  
  $$
  or
  \begin{equation}
    \label{ie_2mod5_param_range1}
    n\ge 2 +q\cdot\frac{q+3 + \sqrt{q^2 - 2q - 7+8x}}{2}=\frac{q^2+3q +4 +q\sqrt{q^2 - 2q - 7+8x}}{2}    
  \end{equation}
  where we only need to consider Inequality~(\ref{ie_2mod5_param_range1}), due to $n\ge 2+3q$. From $n\equiv 2\pmod q$, the 
  estimation
  $$
    2 +q\cdot\frac{q+3 + \sqrt{q^2 - 2q - 7+8x}}{2}=2 +q\cdot\frac{q+3 + \sqrt{(q\!-\!3)^2\!+\!4(q\!-\!4)\!+\!8x}}{2} 
    \,\,\underset{q\ge 5}{\overset{x\ge 0}{>}}\,\, q^2+2,
  $$ 
  and $n\notin\left\{q^2+q+2,q^2+2q+2,q^2+3q+2\right\}$ we conclude $n\ge q^2+4q+2$. From $a_2\ge 0$ we conclude
  \begin{equation}
    \label{ie_2mod5_param_range2}
    n\le \frac{q^3 + xq + 3q^2 + 3q + 2}{q + 1},
  \end{equation}  
  so that Inequality~(\ref{ie_2mod5_param_range1}) yields
  $$
    \frac{q^3 + xq + 3q^2 + 3q + 2}{q + 1} \ge \frac{q^2+3q +4 +q\sqrt{q^2 - 2q - 7+8x}}{2},
  $$
  which implies $x\le q + 2$ or $x\ge q^2 + q + 1$. If $x\ge q^2+q+1$, then $a_2+a_{q+2}+x=q^2+q+1$, and $a_2,a_{q+2}\ge 0$ imply 
  $a_2=0$, $a_{q+2}=0$, and $x=q^2+q+1$, so that $n=2(q^2+q+1)$. This is case (7). If $x\le q+2$, then Inequality~(\ref{ie_2mod5_param_range2}) 
  implies 
  $$
    n\le \frac{q^3 + q(q+2) + 3q^2 + 3q + 2}{q + 1}=q^2 + 3q + 2, 
  $$
  a range for $n$ that has been treated before.
\end{proof}

We remark that the cases $q\in\{2,3,4\}$ admit the same solutions of the standard equations and a few more:
\begin{eqnarray*}
  n=11, a_2=7, a_5=6, a_8=0, \lambda_0=6, \lambda_1=3, \lambda_2=4
\end{eqnarray*}
for $q=3$ and
\begin{eqnarray*}
   n=14, a_2=14, a_6=7, a_{10}=0, \lambda_0=14, \lambda_1=0, \lambda_2=7\\
   n=18, a_2=9, a_6=12, a_{10}=0, \lambda_0=12, \lambda_1=0, \lambda_2=9
\end{eqnarray*}
for $q=4$.
For $q=3$ the arc can be described as follows. The four $2$-points form an oval, all internal points are $1$-points, and all external points are $0$-points. 
A generator matrix of the corresponding code is e.g.\ given by  
$$
\begin{pmatrix}
  11111111100\\
  00001111210\\
  00110011201\\
\end{pmatrix}.
$$
For $q=4$ we can construct a corresponding projective $2$-divisible arc via $\cK'(P)=\cK(P)/2$ for all $P\in\cP$. The corresponding codes are $2$-weight codes and 
examples are given by the parametric families RT1 and RT3 in \cite{calderbank1986geometry}, respectively.


\begin{Proposition}
  \label{prop_strong_2_mod_q_odd_q}
  Let $q\ge 5$ be odd. For a strong $(2\mod q)$-arc $\cK$ in $\PG(2,q)$ we have the following possibilities:
  \begin{enumerate}
  \item[(I)] A lifted arc from a $2$-line with $\#\cK=2q+2$. There exist two possibilities:
  \begin{enumerate}
    \item[(I-1)] a double line; or
    \item[(I-2)] a sum of two different lines.
  \end{enumerate}  
  \item[(II)] A lifted arc from a $(q+2)$-line $L$ with $\#\cK=q^2+2q+2$ points. The line $L$ has $i$ double points, $q-2i+2$ single points, 
              and $i-1$ $0$-points, where $1\le i\le\tfrac{q+1}{2}$. We say that such an arc is of type (II-i) if it is lifted from a line with $i$ 
              double points.
  \item[(III)] A lifted arc from a $(2q+2)$-line, which is the same as two copies of the plane. Such an arc has $2(q^2+q+1)$ points.
  \item[(IV)] An exceptional $(2\mod q)$-arc for $q$ odd. It consists of the points of an oval, a fixed tangent to this oval, and two copies of each 
              internal point of the oval.                  
  \end{enumerate}
\end{Proposition}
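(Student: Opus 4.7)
My plan is to apply Lemma~\ref{lemma_2mod5_arcs_nonnegative_solutions}, which enumerates the seven feasible spectra, and handle each case separately. Cases (1), (2), (3), (5), (7) will correspond to (I-2), (I-1), (IV), (II-$(i+1)$), (III) respectively, while cases (4) and (6) will be ruled out using the hypothesis $q$ odd.

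The easy cases are handled by reading off the structure from the extremal lines. Case (7) forces every line to be fully doubled, giving $\cK=2\chi_{\PG(2,q)}$, which is (III). In case (2), the unique $(2q+2)$-line $L$ is fully doubled and $\lambda_0=q^2$ leaves nothing outside $L$, so $\cK=2\chi_L$ is (I-1). In case (1), incidence summation at the unique 2-point $P$ yields two $(q+2)$-lines $L_1,L_2$ through $P$; each carries $q$ 1-points, accounting for all $2q$ 1-points, and $\cK=\chi_{L_1}+\chi_{L_2}$ is (I-2). To eliminate case (6), $\lambda_1=0$ implies $\cK=2\widetilde{\cK}$ for a set $\widetilde{\cK}$, so every line mass is even, contradicting the existence of $(q+2)$-lines since $q+2$ is odd. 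In case (4), the unique doubled line $L$ is fully doubled, and $\cK':=\cK-2\chi_L$ is a $q$-divisible arc with line multiplicities in $\{0,q\}$, no 1-points, and $\lambda'_2>0$; any 2-point $Q$ of $\cK'$ lies only on $q$-lines, whose residual $q-2$ mass consists of $0$'s and $2$'s, forcing $q$ even, a contradiction.

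For case (5), the main step is to show that the $i+1$ doubled lines concur at a single point $P$. Incidence summation at each 2-point $Q$ yields $b_Q+2c_Q=q+2$ and $a_Q=c_Q-1$, where $a_Q,b_Q,c_Q$ count the $2$-, $(q+2)$- and $(2q+2)$-lines through $Q$, so $c_Q\geq 1$. Counting incidences $\sum_Q c_Q=(i+1)(q+1)$ and pairs of doubled lines $\sum_Q\binom{c_Q}{2}=\binom{i+1}{2}$ gives $\sum(c_Q-1)=i$ and $\sum c_Q(c_Q-1)=i(i+1)$, so the weighted average $\sum c_Q(c_Q-1)/\sum(c_Q-1)$ equals $i+1$. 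Since every $c_Q\leq i+1$, this identity forces $c_Q=i+1$ for a unique 2-point $P$ (with $c_Q=1$ elsewhere), so all doubled lines pass through $P$. Then $a_P=i$ and $b_P=q-2i$, and on any $(q+2)$-line $\ell$ through $P$ no off-$P$ point can be a 2-point (else it would lie on another doubled line through $P$, coinciding with $\ell$), so $\ell\setminus\{P\}$ consists of $q$ 1-points. Thus for any line $B$ not through $P$, the restriction $\cK|_B$ is a $(2\mod q)$-base arc of cardinality $q+2$ with exactly $i+1$ 2-points, and the constancy of $\cK$ on each line through $P$ off $P$ shows $\cK$ is its lift via Theorem~\ref{lifted_arcs_construction}, giving (II-$(i+1)$).

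For case (3), incidence summations at 2-, 1-, and 0-points yield $(a_Q,b_Q)=(0,q+1),(1,q),(2,q-1)$ respectively. Hence each 2-line carries exactly two 1-points and $q-1$ 0-points; the $q$ 2-lines pair the $2q$ 1-points, and their $\binom{q}{2}$ pairwise intersections exhaust the $q(q-1)/2$ 0-points, so no three 2-lines concur and they form a dual $q$-arc. The hard part will be to extend this dual $q$-arc to a conic envelope: since any five points in general position in $\PG(2,q)$ determine a unique conic and two distinct conics share at most four points, for $q\geq 5$ odd the dual $q$-arc extends to $q+1$ tangent lines of an oval $O$. Reading back through the duality, the missing tangent $T$, the oval $O$ with its distinguished tangent point $P_0$ (forced to be a 2-point as the unique oval point lying on no 2-line), and the $q(q-1)/2$ internal 2-points reconstruct the configuration in (IV); a direct verification that (IV) realizes the spectrum of case (3) closes the classification.
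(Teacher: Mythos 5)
Your overall strategy coincides with the paper's: both proofs run through the seven spectra of Lemma~\ref{lemma_2mod5_arcs_nonnegative_solutions}, match cases (1), (2), (5), (7), (3) with (I-2), (I-1), (II), (III), (IV), and kill (4) and (6) using oddness of $q$. Your treatments of (1), (2), (6), (7) are essentially the paper's; for (4) the same one-line parity observation you use for (6) ($\lambda_1=0$ forces every line multiplicity to be even, yet $a_{q+2}>0$ with $q+2$ odd) already suffices, so the detour through $\cK-2\chi_L$ is unnecessary, though not wrong. Your case (5) takes a genuinely different and rather elegant route: the paper locates the common point of the doubled lines by analysing the line pencil through a $0$-point, whereas you force concurrency from $\sum_Q(c_Q-1)=i$, $\sum_Q c_Q(c_Q-1)=i(i+1)$ and $c_Q\le i+1$. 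That argument is sound for $i\ge 1$, but for $i=0$ the quoted ratio is $0/0$ and there is no \emph{unique} $P$ with $c_P=i+1$, so you should treat $i=0$ separately (the paper does so by noting that subtracting $1$ from every point multiplicity leaves the characteristic function of a line); your subsequent lifting argument does still go through there with $P$ any point of the unique doubled line, but as written the claim of uniqueness is false in that subcase.

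The one genuine gap is in case (3). The assertion that a $q$-arc in $\PG(2,q)$, $q$ odd, extends to an oval (equivalently, lies on a conic) is true and is exactly what the paper invokes, but your justification --- ``five points in general position determine a unique conic and two distinct conics share at most four points'' --- does not prove it. Those two facts only show that \emph{if} all $q$ points of the dual arc lie on a common conic, then that conic is unique; they give no reason why the conic through one chosen $5$-subset should contain the remaining $q-5$ points, and for $q\ge 9$ two disjoint $5$-subsets could a priori determine different conics. The completion of $q$-arcs for odd $q$ is Segre's theorem, proved via the lemma of tangents, and has to be cited as a known result rather than derived from conic counting. Once that reference is supplied, your reconstruction of configuration (IV) from the dual oval is at the same level of detail as the paper's and the classification closes.
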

\begin{proof}  
  We apply Lemma~\ref{lemma_2mod5_arcs_nonnegative_solutions} and first note that the cases (4) and (6) cannot occur for odd field sizes $q$ since $\lambda_1=0$ but 
  $a_{q+2}>0$. First observe that each $(2q+2)$-line is a double line, i.e.\ each of the $q+1$ points has multiplicity $2$, and each $(q+2)$-line contains at least 
  one $2$-point. For case (1) there is a unique $2$-point which has to be contained on the two $(q+2)$-lines, so that the remaining $2q$ points on these two lines 
  are $1$-points. This is case (I-2) in the classification. For the case (2) the unique $(2q+2)$-line, $\lambda_1=0$, and $\lambda_2=q+1$ imply case (I-1). In case (7) 
  all points have multiplicity $2$, which corresponds to case (III). For case (5) let us first observe that there are no $0$-points for $i=0$, i.e., setting 
  $\cK'(P)=\cK(P)-1$ for all $P\in\cP$ gives a strong $(1\mod q)$-arc of cardinality $q+1$, which is the characteristic function of a line.  
  The multiset of points given by $\PG(2,q)$ and a line can also be described as in (II-1). For $i\ge 1$ there exist $0$-points, so that the distribution 
  of the multiplicities of the lines through a $0$-point is given by $2^1 (q+2)^q$. Due to the existence of a $(2q+2)$-line, the $2$-line through a $0$-point contains a 
  $2$-point. If $i=1$ there is a unique such $2$-point $Q$. For $i>1$ we observe that all such $2$-lines through $0$-points have to intersect in the same $2$-point (that we 
  also call $Q$). So, through the $2$-point $Q$ there are exactly $i$ two-lines, so that counting points give that the remaining lines through $Q$ split into $i+1$ lines 
  of multiplicity $2q+2$, which contain all $2$-points, and $q-2i$ lines of multiplicity $q+2$, which then consist of $q$ one-points and $Q$. This is the situation described 
  in case (II-$(i+1)$). For the remaining case (3) we consider the dual arc $\cK^\sigma$ with respect to $\sigma(x)=\tfrac{q+2-x}{q}$. With this, $\cK^\sigma$ is a 
  (projective) $(q,2)$-arc in $\PG(2,q)$ which is extendable. An extension point of $\cK^\sigma$ corresponds to a full line in $\cK$. After extending $\cK^\sigma$ we obtain 
  an oval, which yields the description for $\cK$ given in (IV).            
\end{proof}

From the coding theory perspective, Proposition~\ref{prop_strong_2_mod_q_odd_q} was implicitly proven in \cite{maruta2004new}. Having the implications of the standard equations, 
i.e., Lemma~\ref{lemma_2mod5_arcs_nonnegative_solutions} at hand, we can say a bit more. For even field sizes $q$ the case (4) in Lemma~\ref{lemma_2mod5_arcs_nonnegative_solutions} 
can be attained. Removing the unique double line from $\cK$ and halving all point multiplicities yields a projective $q/2$-divisible arc $\cK'$ with cardinality $q(q-1)/2$ and line 
multiplicities $0$ and $q/2$ in $\PG(2,q)$. A corresponding $2$-weight code is contained in the family TF2 in \cite{calderbank1986geometry}. In case (6) halving the point 
multiplicities yields a projective $q/2$-divisible arc $\cK'$ with cardinality $(q+1)(q+2)/2$ and line multiplicities $q(q+1)/2$ and $q(q+2)/2$ in $\PG(2,q)$. 
Corresponding $2$-weight codes are contained in the families TF1d and TF2d in \cite{calderbank1986geometry}.

The implicit classification result of strong $(2 \mod q)$-arcs in $\PG(2,q)$ for odd $q$ from \cite{maruta2004new}, i.e., Proposition~\ref{prop_strong_2_mod_q_odd_q}, 
was used in \cite{landjev2019divisible} to show:
\begin{Theorem}(\cite[Theorem 11]{landjev2019divisible},\cite[Theorem 5]{rousseva2015structure})
  \label{thm_hyperplane_in_support_special}
  Let $\cK$ be a strong $(2\mod q)$-arc in $\PG(v-1,q)$, where $v\ge 4$ and $q$ is odd. Then, $\cK$ is a lifted arc. In particular, for $v\ge 3$ and odd field sizes $q$ every 
  $(2\mod q)$-arc in $\PG(v-1,q)$ has a hyperplane in its support.   
\end{Theorem}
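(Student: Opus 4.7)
The plan is to prove the main statement by induction on $v$, with the substantive work concentrated in the base case $v=4$, and then to deduce the ``in particular'' assertion as a corollary.

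For the inductive step ($v\ge 5$), let $\cK$ be a strong $(2\mod q)$-arc in $\PG(v-1,q)$. Restriction to any hyperplane $H$ preserves both the max-multiplicity bound $\le 2$ and the $(2\mod q)$ line congruence, so $\cK|_H$ is again a strong $(2\mod q)$-arc in $H\cong\PG(v-2,q)$, and $v-1\ge 4$ places us in the range of the inductive hypothesis. Hence each $\cK|_H$ is lifted, and Theorem~\ref{thm_everything_is_lifted} immediately promotes this to $\cK$ itself being lifted. Everything therefore reduces to the case $v=4$.

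For the base case, Proposition~\ref{prop_strong_2_mod_q_odd_q} sorts every plane restriction $\cK|_H$ into one of the types (I), (II), (III), (IV). Types (I)--(III) are lifted by construction, so if no plane restriction of $\cK$ has type (IV) then Theorem~\ref{thm_everything_is_lifted} completes the argument. The crux of the proof, and the step I expect to be the main obstacle, is to rule out type (IV) as a plane restriction of a global strong $(2\mod q)$-arc in $\PG(3,q)$. The plan is to assume for contradiction that $\cK|_{H_0}$ has type (IV), with oval $\cO\subset H_0$, distinguished tangent $\ell$, and unique $2$-point $P_0=\ell\cap\cO$, and to probe the pencil of $q+1$ planes $H_0,H_1,\ldots,H_q$ through $\ell$. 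In every such plane $\ell$ is a $(q+2)$-line with exactly one $2$-point ($P_0$) and $q$ one-points. Matching this shape against the line spectra from Lemma~\ref{lemma_2mod5_arcs_nonnegative_solutions} kills type (III) (which would force every point of $\ell$ to be a $2$-point) and type (I-1) (which has no $(q+2)$-line at all); in every surviving type $P_0$ is forced to play the role of a distinguished point. Combining this local information with the counting identity
\[
  \sum_{j=0}^{q}\cK(H_j) \;=\; (q+1)\,\cK(\ell)+\bigl(n-\cK(\ell)\bigr) \;=\; n+q(q+2),
\]
the allowed cardinalities $\cK(H_j)\in\{2q+2,\,q^2+q+2,\,q^2+2q+2,\,2(q^2+q+1)\}$, and the $2$-point tallies from Lemma~\ref{lemma_2mod5_arcs_nonnegative_solutions}, the plan is to show that no distribution of types over $H_1,\ldots,H_q$ can simultaneously match the cardinality sum and the count of $2$-points off $\ell$ imposed by the type (IV) structure on $H_0$. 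Converting these compatibility constraints into a concrete numerical contradiction is the technical heart of the argument.

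For the ``in particular'' assertion, the case $v=3$ follows directly from Proposition~\ref{prop_strong_2_mod_q_odd_q}: in each of (I)--(IV) one verifies by inspection that a full line lies in the support, e.g.\ the distinguished tangent $\ell$ in type (IV). For strong $\cK$ with $v\ge 4$, the main theorem writes $\cK$ as a lift from some $\cK_0$ along a lifting point $P$, and the span $\langle P,S\rangle$ of $P$ with a hyperplane $S$ of the base lying in the support of $\cK_0$ (obtained inductively) is a hyperplane of $\PG(v-1,q)$ fully contained in the support of $\cK$, since every point of $\langle P,S\rangle$ either equals $P$, lies in $S$, or lies on a secant $\langle P,R\rangle$ with $R\in S$, each with positive $\cK$-value. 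For general non-strong $(2\mod q)$-arcs one reduces to the strong case by subtracting multiples of $q$ at overweight points, which preserves the $(2\mod q)$ property, and then transfers the hyperplane in the support back to the original arc.
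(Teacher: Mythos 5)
The paper does not actually prove this theorem; it imports it from \cite[Theorem~11]{landjev2019divisible} and \cite[Theorem~5]{rousseva2015structure}, remarking only that the proof there rests on the planar classification (Proposition~\ref{prop_strong_2_mod_q_odd_q}). Your overall frame --- induction on $v$ via Theorem~\ref{thm_everything_is_lifted}, with the planar classification driving the base case $v=4$ --- is the right skeleton and matches the cited strategy. However, your base case contains a fatal conceptual error: type (IV) plane restrictions \emph{cannot} be ruled out, so the ``numerical contradiction'' you are hoping to extract does not exist. Indeed, take a type (IV) arc $\cK_0$ in a plane $H$ of $\PG(3,q)$ and lift it from a point $P\notin H$ via Theorem~\ref{lifted_arcs_construction}. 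The result is a strong $(2\mod q)$-arc of cardinality $q^3+q^2+2q+2$ whose restriction to every one of the $q^3$ planes not through $P$ is projectively equivalent to $\cK_0$, hence again of type (IV). In this example the pencil through the distinguished tangent $\ell'$ of such a plane consists of $q$ planes of type (IV) and one plane of type (II-1) (the plane $\langle P,\ell'\rangle$), and one checks that this distribution satisfies your cardinality identity and all the $2$-point tallies from Lemma~\ref{lemma_2mod5_arcs_nonnegative_solutions}. So the compatibility system you set up is consistent, and no amount of refinement of that counting will close the argument. What the correct proof must do with a type (IV) plane is not eliminate it but show that its presence \emph{forces} $\cK$ to be lifted --- e.g.\ by locating the lifting point from the way the pencils through the $2$-lines (the tangents $\neq\ell$, whose ambient planes can only be of types (I-2) and (IV)) and through $\ell$ fit together. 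You also leave the decisive computation unexecuted even on its own terms, describing it only as ``the technical heart''.

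A secondary gap: in the ``in particular'' part you reduce a general $(2\mod q)$-arc to a strong one by ``subtracting multiples of $q$ at overweight points''. Reducing each point multiplicity modulo $q$ lands in $\{0,1,\dots,q-1\}$, not in $\{0,1,2\}$, so for $q\ge 5$ the resulting arc need not be strong and the reduction does not go through as stated. Your verification that each planar type (I)--(IV) has a full line in its support, and the span argument $\langle P,S\rangle$ for lifted arcs, are fine for the strong case.
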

In \cite[Remark 2]{landjev2019divisible} it was mentioned that Theorem~\ref{thm_hyperplane_in_support_special} provides an alternative proof of Maruta’s theorem \cite{maruta2004new} 
on the extendability of linear codes with weights $-2, -1, 0 \pmod q$ over $\F_q$. 

For strong $(t\mod q)$-arcs the situation is far more complicated if $t\ge 3$. E.g.\ for strong $(3\mod q)$-arcs in $\PG(2,q)$ we have 
many strong $(3 \mod q)$-arcs obtained as the sum of a strong $(2 \mod q)$- and a strong $(1\mod q)$-arc, but also some non-trivial indecomposable arcs, see 
Section~\ref{section_strong_3_mod_5_in_pg_2_5} where we fully classify all strong $(3 \mod 5)$-arcs in $\PG(2,5)$. For the sake of completeness we state some easy general observations.
\begin{Lemma}
  Let $\cK$ be a $(t \mod q)$-arc in $\PG(v-1,q)$, where $v\ge 3$. For every hyperplane $H$ in $\PG(v-1,q)$ the restricted arc $\cK|_H$ is a 
  $(t\mod q)$-arc in $\PG(v-2,q)$. If $\cK$ is strong, so is $\cK|_H$.
\end{Lemma}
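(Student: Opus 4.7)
The plan is to observe that restriction is well-behaved because every line of the hyperplane is also a line of the ambient geometry, so the modular condition is inherited directly from $\cK$.

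First I would fix a hyperplane $H$ of $\PG(v-1,q)$ and identify it with $\PG(v-2,q)$; since $v\ge 3$, this is a projective space of dimension at least $2$, so it contains lines and the notion of a $(t\mod q)$-arc on $H$ is meaningful. The restricted arc $\cK|_H\colon H\to\N_0$ is given by $\cK|_H(P)=\cK(P)$ for every point $P$ of $H$.

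The key step is the simple observation that every line $L$ of $H$ (viewed as a subset of $v-2$ projective dimension) is also a line of the ambient $\PG(v-1,q)$, and the set of points of $L$ is identical in both geometries. Consequently
\begin{equation*}
  \cK|_H(L)=\sum_{P\in L}\cK|_H(P)=\sum_{P\in L}\cK(P)=\cK(L)\equiv t\pmod q,
\end{equation*}
where the last congruence holds by assumption. Hence $\cK|_H$ satisfies the defining line-condition of a $(t\mod q)$-arc in $\PG(v-2,q)$.

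For the second assertion, I would note that if $\cK$ is strong, meaning $\cK(P)\le t$ for every point $P$ of $\PG(v-1,q)$, then in particular $\cK|_H(P)=\cK(P)\le t$ for every point $P$ of $H$, so $\cK|_H$ is again strong. No serious obstacle arises; the statement is essentially a sanity check ensuring that the class of (strong) $(t\mod q)$-arcs is closed under restriction to hyperplanes, a fact that is used repeatedly in the subsequent classification arguments.
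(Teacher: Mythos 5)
Your proof is correct: the paper states this lemma without proof as one of several ``easy general observations,'' and your argument (every line of $H$ is a line of the ambient space with the same point set, so the congruence $\cK(L)\equiv t\pmod q$ and the bound $\cK(P)\le t$ are inherited verbatim) is exactly the intended justification. Nothing is missing.
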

\begin{Lemma}
  \label{lemma_t_mod_q_arc_sum_of_hyperplanes}
  Let $\cK$ be a $(t \mod q)$-arc in $\PG(v-1,q)$, where $v\ge 2$. Then, we have $\#\cK\ge [v-1]_q\cdot t$.
\end{Lemma}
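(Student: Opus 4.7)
The plan is to do a short case analysis with no induction. The key preliminary observation is that for every line $L$ in $\PG(v-1,q)$ we have $\cK(L)\equiv t\pmod q$ with $\cK(L)\in\N_0$ and $1\le t\le q-1$, so $\cK(L)\ge t$.

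Next I would fix an arbitrary point $P\in\cP$ and exploit the fact that the $[v-1]_q$ lines through $P$ partition $\cP\setminus\{P\}$ into classes of size $q$. Counting the total multiplicity in two ways yields the identity
\begin{equation*}
  \sum_{L\ni P} \cK(L) \;=\; \#\cK + \bigl([v-1]_q-1\bigr)\cdot \cK(P).
\end{equation*}

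From here the two cases are immediate. If $\cK$ has a $0$-point $P$, the identity collapses to $\#\cK=\sum_{L\ni P}\cK(L)\ge [v-1]_q\cdot t$ using the preliminary observation. Otherwise every point has multiplicity at least $1$, so $\#\cK\ge [v]_q=q\cdot [v-1]_q+1>t\cdot [v-1]_q$, where the last inequality uses $t\le q-1<q$.

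There is no real obstacle; the only thing to double-check is the degenerate case $v=2$, where $[v-1]_q=1$ and the unique ``line'' through any point is the whole space $\PG(1,q)$ itself, so $\#\cK=\cK(\PG(1,q))\equiv t\pmod q$ with $\#\cK\ge 0$ forces $\#\cK\ge t$, consistent with both cases of the dichotomy.
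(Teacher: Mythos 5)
Your proof is correct. The paper states this lemma without proof, listing it among some ``easy general observations,'' so there is no argument in the text to compare against; your line-pencil argument is the natural one and all steps check out. In particular, the counting identity $\sum_{L\ni P}\cK(L)=\#\cK+\bigl([v-1]_q-1\bigr)\cK(P)$ is right (the $[v-1]_q$ lines through $P$ partition $\cP\setminus\{P\}$), the observation $\cK(L)\ge t$ follows correctly from $\cK(L)\equiv t\pmod q$, $\cK(L)\in\N_0$ and $1\le t\le q-1$, and the dichotomy (a $0$-point exists, versus $\#\cK\ge[v]_q=q[v-1]_q+1>t[v-1]_q$) covers all cases, including $v=2$. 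One small remark: your case split is not strictly necessary, since even when $\cK(P)>0$ the identity gives $\#\cK=\sum_{L\ni P}\cK(L)-\bigl([v-1]_q-1\bigr)\cK(P)$, and choosing $P$ of minimum multiplicity also works; but the version you wrote is perfectly fine and matches the fact (Proposition~\ref{prop_t_mod_q_arc_sum_of_hyperplanes}) that equality forces a sum of $t$ hyperplanes, which necessarily has $0$-points when $t<[v]_q$.
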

\begin{Lemma}
  \label{lemma_t_mod_q_hyperplane_in_support}
  If $\cK$ is a $(t \mod q)$-arc in $\PG(v-1,q)$ whose support contains a hyperplane $H$, where $t\ge 1$, then $\cK'=\cK-\chi_H$ is 
  a  $(t-1 \mod q)$-arc in $\PG(v-1,q)$.
\end{Lemma}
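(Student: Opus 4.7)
The plan is to check the two defining conditions of a $(t-1\mod q)$-arc directly from the definitions, namely that $\cK'=\cK-\chi_H$ takes values in $\N_0$ and that every line has multiplicity congruent to $t-1$ modulo $q$.

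First I would verify that $\cK'$ is a genuine multiset, i.e., that $\cK'(P)\ge 0$ for every $P\in\cP$. For $P\notin H$ we have $\chi_H(P)=0$, so $\cK'(P)=\cK(P)\ge 0$. For $P\in H$, the hypothesis that $H$ is contained in the support of $\cK$ gives $\cK(P)\ge 1=\chi_H(P)$, so again $\cK'(P)\ge 0$. Hence $\cK'\colon\cP\to\N_0$.

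Next I would compute $\cK'(L)\pmod q$ for an arbitrary line $L$ of $\PG(v-1,q)$. The key observation is that $\chi_H(L)\equiv 1\pmod q$ in every case: if $L\subseteq H$, then $\chi_H(L)=\#L=q+1\equiv 1\pmod q$, while if $L\not\subseteq H$, then $L$ intersects the hyperplane $H$ in a single point, giving $\chi_H(L)=1$. Combining this with $\cK(L)\equiv t\pmod q$, which holds by assumption since $\cK$ is a $(t\mod q)$-arc, yields
\begin{equation*}
  \cK'(L) \;=\; \cK(L) - \chi_H(L) \;\equiv\; t-1 \pmod q
\end{equation*}
for every line $L$. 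This is exactly the condition that $\cK'$ is a $(t-1\mod q)$-arc in $\PG(v-1,q)$, completing the proof.

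There is no real obstacle here: the argument reduces to the elementary incidence fact that a hyperplane meets any line in either the whole line or a single point, together with $q+1\equiv 1\pmod q$. The statement is essentially the inverse of peeling off a hyperplane from a lifted-type construction and will presumably be iterated later in the paper to reduce a strong $(t\mod q)$-arc with a hyperplane in its support to a strong $(t-1\mod q)$-arc.
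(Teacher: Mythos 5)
Your proof is correct and is exactly the argument the authors intend: the paper states this lemma without proof as an "easy general observation," and your two checks (nonnegativity of $\cK'$ from $H\subseteq\operatorname{supp}(\cK)$, and $\chi_H(L)\equiv 1\pmod q$ since a hyperplane meets a line in one point or in all $q+1$ points) are precisely the omitted details.
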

Note that it may happen that $\cK$ is strong while $\cK'$ is not strong. This is e.g.\ the case when $\cK$ has full support.
\begin{Proposition}
  \label{prop_t_mod_q_arc_sum_of_hyperplanes}  
  For $v\ge 2$, each $(t \mod q)$-arc $\cK$ in $\PG(v-1,q)$ of cardinality $[v-1]_q\cdot t$ is a sum of $t$ hyperplanes.
\end{Proposition}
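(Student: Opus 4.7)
The plan is to proceed by induction on $t$. The base case $t=0$ is trivial, since the zero arc is the empty sum of hyperplanes. For $t \ge 1$, by Lemma~\ref{lemma_t_mod_q_hyperplane_in_support} it suffices to exhibit a hyperplane $H \subseteq \operatorname{supp}(\cK)$: the arc $\cK - \chi_H$ is then a $(t-1 \mod q)$-arc of cardinality $[v-1]_q(t-1)$, which by the induction hypothesis decomposes as a sum of $t-1$ hyperplanes.

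To find such an $H$, I refine the equality case of Lemma~\ref{lemma_t_mod_q_arc_sum_of_hyperplanes}. If $\cK \ge \chi_{\cP}$, then $\cK - \chi_{\cP}$ is a $(t-1 \mod q)$-arc of cardinality $[v-1]_q t - [v]_q < [v-1]_q(t-1)$, contradicting Lemma~\ref{lemma_t_mod_q_arc_sum_of_hyperplanes} applied with $t-1$. Hence some $0$-point $P$ must exist, and the identity $\sum_{L \ni P}\cK(L) = \#\cK = [v-1]_q t$ combined with $\cK(L) \ge t$ on each of the $[v-1]_q$ lines through $P$ forces $\cK(L) = t$ for every such line. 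Two consequences are crucial: first, $\cK(Q) \le \cK(\langle P,Q\rangle) = t$ for every $Q \ne P$, so $\cK$ is strong and $P \notin \operatorname{supp}(\cK)$; second, every hyperplane $H \ni P$ satisfies $\cK(H) = [v-2]_q t$, i.e.\ $\cK|_H$ is a minimum $(t \mod q)$-arc in $H$.

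A secondary induction on $v$ (the base case $v=2$ being trivial, since any point of $\PG(1,q)$ in $\operatorname{supp}(\cK)$ is itself a hyperplane) then yields the desired $H$. By this inner induction, $\cK|_H = \sum_{i=1}^t \chi_{T_i^H}$ for certain $(v-2)$-subspaces $T_i^H \subseteq H$, and $\cK(P) = 0$ forces $P \notin T_i^H$ for each $i$. For any pair $H_1, H_2 \ni P$, restricting once more to the $(v-2)$-space $H_1 \cap H_2$ yields the equality $\sum_i \chi_{T_i^{H_1} \cap (H_1 \cap H_2)} = \sum_j \chi_{T_j^{H_2} \cap (H_1 \cap H_2)}$ of sums of $(v-3)$-subspaces, inducing a bijection $i \mapsto j(i)$ that matches components with the same trace on $H_1 \cap H_2$. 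Matched pairs $T_i^{H_1}$ and $T_{j(i)}^{H_2}$ intersect precisely in the common trace, a $(v-3)$-space, and therefore span a hyperplane of $\PG(v-1,q)$.

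The main obstacle is establishing the global coherence of these pairwise matchings as $H$ varies over the $[v-1]_q$ hyperplanes containing $P$, so that the matched components assemble into $t$ genuine hyperplanes of $\PG(v-1,q)$ whose sum is $\cK$. Once this coherence is in hand, each of the $t$ resulting hyperplanes lies inside $\operatorname{supp}(\cK)$, completing both the secondary induction on $v$ and, via Lemma~\ref{lemma_t_mod_q_hyperplane_in_support}, the primary induction on $t$.
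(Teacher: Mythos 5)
The paper states this Proposition without any proof, so there is nothing of the authors' to compare against; judged on its own, your submission is a plan with an unfilled hole at the decisive step, and you say so yourself (``The main obstacle is establishing the global coherence of these pairwise matchings \dots\ Once this coherence is in hand \dots''). Everything before that point is sound: the reduction via Lemma~\ref{lemma_t_mod_q_hyperplane_in_support} to exhibiting a hyperplane in the support, the existence of a $0$-point $P$ (indeed $\#\cK=[v-1]_q t<[v]_q$ already forces one), the fact that all $[v-1]_q$ lines through $P$ are $t$-lines and all hyperplanes through $P$ are minimal, and the appeal to the inner induction giving $\cK|_H=\sum_i\chi_{T_i^H}$. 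But assembling the $T_i^H$ into $t$ hyperplanes of $\PG(v-1,q)$ \emph{is} the content of the Proposition, and the mechanism you propose cannot work as stated: for $v=3$ --- the first nontrivial case of your inner induction --- two hyperplanes through $P$ are lines meeting only in $P$, every $T_i^{H}$ avoids $P$, so every trace $T_i^{H_1}\cap(H_1\cap H_2)$ is the zero subspace and the ``matching by common trace'' carries no information at all. For $v\ge 4$ the traces are nontrivial, but you still face the non-canonicity of the matching when several components share a trace and, above all, the transitivity of the matchings across the $[v-1]_q$ hyperplanes through $P$; neither is addressed.

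The gap is closable, but by a different device. For $v=3$ write $\cK(L)=t+qf_L$ with $f_L\in\N_0$ (possible since $\cK(L)\equiv t\pmod q$ forces $\cK(L)\ge t$); the standard equations give $\sum_L f_L=t$ and $\sum_{L\ni Q}f_L=\cK(Q)$ for every point $Q$, whence $\cK=\sum_L f_L\chi_L$ is literally a sum of $t$ lines --- no $0$-point and no matching needed. For $v\ge4$ the analogous computation with $\cK(H)=[v-2]_q t+qf_H$ only yields $\sum_H f_H=q^{v-3}t$ and $\sum_{H\ni Q}f_H=q^{v-3}\cK(Q)$, so an additional argument is still required, e.g.\ producing one full hyperplane of $\PG(v-1,q)$ inside $\operatorname{supp}(\cK)$, after which your outer induction on $t$ via Lemma~\ref{lemma_t_mod_q_hyperplane_in_support} finishes. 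That missing production step is exactly what your plan leaves open, so as it stands the argument is not a proof.
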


\section{Strong $(3\mod 5)$-arcs in $\PG(2,5)$}
\label{section_strong_3_mod_5_in_pg_2_5}
For the exhaustive classification of strong $(3\mod 5)$-arcs in $\PG(2,5)$ we utilize Theorem~\ref{thm_pg_2_q} and generate the corresponding minihypers 
as linear codes using the software package \texttt{LinCode} \cite{bouyukliev2020computer}. In Table~\ref{tab_isomorphism_types_of_3_mod_5_arcs_pg_2_5} 
we list the number of isomorphism types. 
\begin{table}[htp]
\begin{center}
  \begin{tabular}{|c|c|c|c|c|r|}
    \hline
    $\#\cK$ & $m$ & $\#\cB$ & line mult. & weights & \# isomorphism types \\ 
    \hline
    \hline
    18 &  3 &  3 & $0,1,2,3$     & $0,1,2,3$     &  4 \\ 
    23 &  4 &  9 & $1,2,3,4$     & $5,6,7,8$     &  1 \\
    28 &  5 & 15 & $2,3,4,5$     & $10,11,12,13$ &  1 \\
    33 &  6 & 21 & $3,4,5,6$     & $15,16,17,18$ & 10 \\
    38 &  7 & 27 & $4,5,6,7$     & $20,21,22,23$ & 23 \\
    43 &  8 & 33 & $5,6,7,8$     & $25,26,27,28$ & 53 \\
    48 &  9 & 39 & $6,7,8,9$     & $30,31,32,33$ & 49 \\
    53 & 10 & 45 & $7,8,9,10$    & $35,36,37,38$ & 17 \\
    58 & 11 & 51 & $8,9,10,11$   & $40,41,42,43$ & 11 \\
    63 & 12 & 57 & $9,10,11,12$  & $45,46,47,48$ &  9 \\
    68 & 13 & 63 & $10,11,12,13$ & $50,51,52,53$ &  6 \\
    73 & 14 & 69 & $11,12,13,14$ & $55,56,57,58$ &  0 \\
    78 & 15 & 75 & $12,13,14,15$ & $60,61,62,63$ &  0 \\
    83 & 16 & 81 & $13,14,15,16$ & $65,66,67,68$ &  0 \\
    88 & 17 & 87 & $14,15,16,17$ & $70,71,72,73$ &  0 \\
    93 & 18 & 93 & $15,16,17,18$ & $75,76,77,78$ &  1 \\
    \hline
  \end{tabular}
  \caption{Number of isomorphism types of strong $(3\mod 5)$-arcs in $\PG(2,5)$ and their corresponding minihypers.}
  \label{tab_isomorphism_types_of_3_mod_5_arcs_pg_2_5}
\end{center}
\end{table}

Strong $(3\mod 5)$-arcs in $\PG(2,5)$ with cardinality at most $33$ have been classified without the help of a computer in \cite{landjev2017characterization,rousseva2015structure}. 
In the latter reference nice pictures can be found. In order to provide some more information on the combinatorial structure of the strong $(3\mod 5)$-arcs in $\PG(2,5)$ we 
define the type of a line $L$ as a vector $\left(m_0,\dots, m_5\right)$, where $m_0\ge m_1\ge\dots\ge m_5$ and the $m_i$ are the multiplicities of the points 
on $L$. By assumption we have $m_i\in\{0,1,2,3\}$ for all $0\le i\le 5$. The possible line types are listed in Table~\ref{tab_line_types}.

\begin{table}[htp]
\begin{center}
  \begin{tabular}{lll}
  \hline
  $\cK(L)$ & type of $L$ & name \\ 
  \hline
  3  & $(3,0,0,0,0,0)$ & $A_1$\\ 
     & $(2,1,0,0,0,0)$ & $A_2$\\ 
     & $(1,1,1,0,0,0)$ & $A_3$\\
  \hline
  8  & $(3,3,2,0,0,0)$ & $B_1$\\ 
     & $(3,3,1,1,0,0)$ & $B_2$\\ 
     & $(3,2,2,1,0,0)$ & $B_3$\\ 
     & $(3,2,1,1,1,0)$ & $B_4$\\
     & $(3,1,1,1,1,1)$ & $B_5$\\ 
     & $(2,2,2,2,0,0)$ & $B_6$\\ 
     & $(2,2,2,1,1,0)$ & $B_7$\\ 
     & $(2,2,1,1,1,1)$ & $B_8$\\
  \hline
  13 & $(3,3,3,3,1,0)$ & $C_1$\\ 
     & $(3,3,3,2,2,0)$ & $C_2$\\     
     & $(3,3,3,2,1,1)$ & $C_3$\\
     & $(3,3,2,2,2,1)$ & $C_4$\\
     & $(3,2,2,2,2,2)$ & $C_5$\\
  \hline
  18 & $(3,3,3,3,3,3)$ & $D_1$\\
  \hline
  \end{tabular}     
    \caption{Different line types of strong $(3\mod 5)$-arcs in $\PG(2,5)$.}
  \label{tab_line_types}
\end{center}     
\end{table}

In Tables~\ref{tab_strong_3_mod_5_arcs_pg_2_5_n_18}-\ref{tab_strong_3_mod_5_arcs_pg_2_5_n_93} in the appendix we list the combinatorial details of the strong $(3\mod 5)$-arcs in 
$\PG(2,5)$, i.e., the counts of lines per type and the counts of points per multiplicity, where we give a separate table for each possible cardinality $18\le n\le 93$, $n\equiv 3\pmod 5$. 
As a possible application of this data we mention the following structure result for strong $(3\mod 5)$-arcs in $\PG(3,5)$.
\begin{Lemma}
  \label{lemma_strong_3_mod_5_pg_3_5_lower_bound_card}
  Let $\cK$ be a strong $(3\mod 5)$-arc in $\PG(3,5)$ without a full hyperplane in its support and $H$ be a hyperplane with $\cK(H)\ge 33$. Either we have $\#\cK\ge 125+\cK(H)$ or 
  $\cK|_H$ is one of the following cases:
  \setlength{\tabcolsep}{0.3mm}
  \begin{center}
  \begin{tabular}{|c||c|c|c|c|c|c|c|c|c|c|c|c|c|c|c|c|c||c|c|c|c||c||c|}
  \hline
  $\cK(H)$ & $A_1$ & $A_2$ & $A_3$ & $B_1$ & $B_2$ & $B_3$ & $B_4$ & $B_5$ & $B_6$ & $B_7$ & $B_8$ & $C_1$ & $C_2$ & $C_3$ & $C_4$ & $C_5$ & $D_1$ & $\lambda_0$ & $\lambda_1$ & $\lambda_2$ & $\lambda_3$ & $\#$ & $\#\cK\ge $\\
  \hline
  33 & 0 & 0 & 10 & 0 & 15 & 0  & 0 & 6 & 0 & 0 & 0 & 0 & 0 & 0 & 0 & 0 & 0 & 10 & 15 & 0 & 6 & 1 & 108 \\ 
  33 & 0 & 6 & 4  & 0 & 6  & 12 & 0 & 0 & 0 & 0 & 3 & 0 & 0 & 0 & 0 & 0 & 0 & 12 & 9  & 6 & 4 & 1 & 108 \\ 
  \hline
  43 & 2 & 0 & 0 & 0 & 0  & 25 & 0 & 1  & 0  & 0 & 0 & 0 & 0 & 0 & 0 & 2 & 1 & 10 & 5 & 10 & 6 & 2 & 118 \\ 
  43 & 2 & 0 & 0 & 0 & 25 & 0  & 0 & 2  & 0  & 0 & 0 & 0 & 0 & 0 & 0 & 0 & 2 & 10 & 10 & 0 & 11 & 2 & 118 \\ 
  \hline
  68 & 1 & 0 & 0 & 0 & 0 & 0 & 0 & 1 & 0 & 0 & 0 & 25 & 0 & 0 & 0 & 0 & 4 & 5 & 5 & 0 & 21 & 1 & 168 \\ 
  \hline
  \end{tabular}
  \end{center}
\end{Lemma}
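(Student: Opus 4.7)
The plan is to assume $\#\cK<125+\cK(H)$ and, using the classification of strong $(3\mod 5)$-arcs in $\PG(2,5)$ from Section~\ref{section_strong_3_mod_5_in_pg_2_5} together with the combinatorial data in the appendix, to deduce that $\cK|_H$ must be one of the five listed types.

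The main identity is the following. For any line $L\subset H$, the pencil through $L$ in $\PG(3,5)$ consists of $H$ together with five further planes $H_1,\dots,H_5$; these five planes partition $\cP\setminus H$ into disjoint blocks of $25$ points each, whence
\[
\cK(\cP\setminus H) \;=\; \sum_{i=1}^{5}\bigl(\cK(H_i)-\cK(L)\bigr),
\]
and the target bound $\#\cK\ge 125+\cK(H)$ is equivalent to $\sum_{i=1}^{5}\cK(H_i)\ge 125+5\cK(L)$. Each $H_i$ is itself a strong $(3\mod 5)$-arc in $\PG(2,5)$ containing $L$ as a line of multiplicity $\cK(L)\ge 3$; hence $\cK(H_i)\ge 18$ by Lemma~\ref{lemma_t_mod_q_arc_sum_of_hyperplanes}, its cardinality must appear in Table~\ref{tab_isomorphism_types_of_3_mod_5_arcs_pg_2_5} (the value $93$ is forbidden by the no-full-hyperplane hypothesis and $73,78,83,88$ do not occur), and its isomorphism type must be compatible with carrying a line of multiplicity $\cK(L)$, a constraint one reads off the appendix tables.

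I would then proceed cardinality-by-cardinality through $\cK(H)\in\{33,38,43,48,53,58,63,68\}$, enumerating the isomorphism types of $\cK|_H$ from the appendix. For each type I look for a witness line $L\subset H$ of smallest possible multiplicity --- typically a $3$-line of type $A_1$, $A_2$ or $A_3$ when one is present --- and check whether the local compatibility constraints above force $\sum_{i=1}^{5}\cK(H_i)\ge 125+5\cK(L)$. Whenever such a witness exists, the desired bound $\#\cK\ge 125+\cK(H)$ follows. For each of the five exceptional types in the statement one verifies directly that no line $L\subset H$ provides such a witness: every line admits at least one compatible pencil configuration with $\sum\cK(H_i)<125+5\cK(L)$, and the smallest admissible sum over all lines gives the weaker lower bounds $\#\cK\ge 108,118,168$ recorded in the last column of the table.

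The main obstacle is the size of the case analysis (up to $53$ isomorphism types for $\cK(H)=43$ and $49$ for $\cK(H)=48$), which makes computer assistance over the machine-readable classification of Section~\ref{section_strong_3_mod_5_in_pg_2_5} the natural way to organise the enumeration. A subtler point is that purely local admissibility at a single line $L$ does not always suffice: in borderline cases one must combine the constraints arising from several distinct lines through a common point, or invoke the global consistency of the five pencil planes as part of $\cK$, in order to eliminate configurations that are locally but not globally realisable.
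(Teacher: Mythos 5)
Your starting identity and the overall plan (run through the appendix classification of $\cK|_H$ and bound $\#\cK$ via the pencil of planes through a line of $H$) are the same as the paper's. However, two concrete points in your execution would make the case analysis fail to close.

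First, the choice of witness line. A $3$-line of type $A_1$, $A_2$ or $A_3$ can \emph{never} certify $\#\cK\ge 125+\cK(H)$: each of these types occurs in a strong $(3\bmod 5)$-arc of cardinality $18$ in $\PG(2,5)$ (see Table~\ref{tab_strong_3_mod_5_arcs_pg_2_5_n_18}), so the best the pencil bound can give from such a line is $\#\cK\ge\cK(H)+5(18-3)=\cK(H)+75$. The whole content of the paper's proof is the identification of the line types for which $m(L)-\cK(L)=25$, namely the $8$-lines of types $B_1,B_4,B_6,B_7$ (every plane containing one has cardinality at least $33$) and the $13$-lines of types $C_2,C_3$ (minimum ambient cardinality $38$); any residual arc containing one of these forces $\#\cK\ge\cK(H)+125$ immediately, and the residuals avoiding all six types are exactly the candidates for the exceptional list. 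Note also that the constraint you propagate to the planes $H_1,\dots,H_5$ must be the full \emph{type} of $L$ (its point multiplicity distribution), not merely its multiplicity $\cK(L)$: an $8$-line of type $B_8$ sits inside an $18$-plane while an $8$-line of type $B_1$ does not, and only the latter fact is useful.

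Second, the residual arcs with $\lambda_0=0$. Several entries in the appendix with $\cK(H)\in\{43,68\}$ (e.g.\ the $43$-arc of type $B_5^{30}D_1$, the one with $B_8^{25}$, and the first two $68$-arcs) contain none of the six expensive line types, so no pencil argument of the above form excludes them; they must be discarded because $\lambda_0=0$ means every point of $H$ lies in $\operatorname{supp}(\cK)$, contradicting the no-full-hyperplane hypothesis. You invoke that hypothesis only against the cardinality-$93$ residual, but it is needed for every residual without a $0$-point. With these two ingredients a \emph{single} well-chosen line suffices in every case, and the ``combine several lines / global consistency'' fallback you anticipate is not needed; the weaker bounds $108$, $118$, $168$ in the last column then come from the best remaining line type in each exceptional residual (a $B_2$ or $B_3$ line with $m=23$ for the first four, a $C_1$ line with $m=33$ for the last).
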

\begin{proof}
  We use Tables~\ref{tab_strong_3_mod_5_arcs_pg_2_5_n_18}-\ref{tab_strong_3_mod_5_arcs_pg_2_5_n_93}, where the possible parameters of the strong $(3\mod 5)$-arcs in $\PG(2,5)$ are listed. 
  Note that we have $\cK(H')\ge 33$ for each hyperplane $H'$ that contains 
  a line of type $B_1$, $B_4$, $B_6$, or $B_7$. So, if $\cK|_H$ contains a line of type $B_1$, $B_4$, $B_6$, or $B_7$, then we have $\#\cK\ge \cK(H)+33\cdot 5-5\cdot 8=125+\cK(H)$. 
  Since each hyperplane $H'$ that contains a line of type $C_2$ or $C_3$ satisfies $\cK(H')\ge 38$, we have $\#\cK\ge \cK(H)+38\cdot 5-5\cdot 13=125+\cK(H)$ if $H$ contains a line of 
  type $C_2$ or $C_3$. If there are no $0$-points in $\cK|_H$, then $\cK$ contains a full hyperplane in its support. All other cases are summarized in the above table. It remains to 
  explain how a lower bound for $\#\cK$ can be obtained. For each line $L$ in $H$ let $m(L)$ denote the minimum cardinality of a strong $(3\mod 5)$-arc in $\PG(2,5)$ that contains 
  a line with the same type as $L$, so that $\#\cK\ge \cK(H)-5\cK(L)+5m(L)$ gives a lower bound. We take the minimum over all possibilities for the type of $L$ in $H$. as an example 
  we consider the two cases where $\cK(H)=33$. There is always a line of type $B_2$ in $H$ with is not contained in an $18$-plane, so that $\cK\ge 33+5\cdot 23-5\cdot 8=108$.       
\end{proof}

\section{Strong $(3\mod 5)$-arcs in $\PG(3,5)$}
\label{section_strong_3_mod_5_in_pg_3_5}
The aim of this section is an exhaustive classification of all strong $(3\mod 5)$-arcs $\cK$ in $\PG(3,5)$. Having Lemma~\ref{lemma_t_mod_q_hyperplane_in_support} at hand, 
or coming from the application of extendable arcs, we will assume that the support of $\cK$ does not contain a full hyperplane. For each strong $(3\mod 5)$-arc $\cK'$ 
in $\PG(2,5)$, see Section~\ref{section_strong_3_mod_5_in_pg_3_5}, of cardinality $m$, the lifting construction in Theorem~\ref{lifted_arcs_construction} gives 
a strong $(3\mod 5)$-arc in $\PG(3,5)$ of cardinality $5m+3$. The question arises if there are any other strong $(3\mod 5)$-arcs in $\PG(3,5)$. It will turn out that there are 
exactly three additional examples, one for the cardinalities $128$, $143$, and $168$, respectively. This contradicts \cite[Theorem 3.3]{landjev2016non}, 
\cite[Theorem 4.1]{landjev2017characterization}, and \cite[Theorem 6]{rousseva2015structure}.  

\begin{Remark}
  Strong $(3\mod 5)$-arcs in $\PG(2,5)$ with cardinality $33$ and type distribution $A_2^6 A_3^4 B_2^6 B_3^{12} B_8^3$ or $A_3^{10} B_2^{15} B_5^6$ of the lines,  
  indeed exist. Generator matrices of the complements of the corresponding minihypers, see Theorem~\ref{thm_pg_2_q}, are given by
  $$
    \begin{pmatrix}
      111111111111111110000\\
      000111222333344441111\\
      134012134023402341234
    \end{pmatrix}  
  $$ 
  and
  $$
    \begin{pmatrix}
      111111111111111110000\\
      000111222333344441111\\
      234134034012301241234\\
    \end{pmatrix}.
  $$
  The existence of these two arcs contradicts \cite[Lemma 4.2]{landjev2017characterization} showing that the corresponding proof is flawed.
\end{Remark}

Given the combinatorial data of the strong $(3 \mod 5)$-arcs in $\PG(2,5)$ in the appendix, it is indeed possible to completely characterize the combinatorial 
data of all $(3 \mod 5)$-arcs in $\PG(3,5)$ with cardinality at most $168$, see \cite[Chapter 8]{kurz2020advanced} for the details. However, the argumentation is rather 
lengthy and error-prone. 
In principle, it should be possible to 
obtain the necessary data from the appendix by computer-free hand-calculations and theoretical arguments. While it is a worthwhile project to obtain a computer-free 
classification of all $(3 \mod 5)$-arcs in $\PG(3,5)$ with cardinality, say, at most $168$, we aim at a full classification using computer enumerations.    

The idea is to automate the kind of reasonings used in e.g.\ \cite{kurz2020advanced,landjev2017characterization,rousseva2015structure}. The main ingredient is the 
known combinatorial data of the strong $(3\mod 5)$-arcs in $\PG(2,5)$, see the appendix. Being general on the one hand and easing the notation on the other hand we 
assume that we are considering strong $(3\mod q)$-arcs in $\PG(3,q)$ for a moment. Given a residual arc $\cK_H$ of a strong $(3 \mod q)$-arc $\cK$ in $\PG(3,q)$, 
i.e.\ a restriction $\cK_H:=\cK|_H$ to a hyperplane $H$, we call the collection of a point $P$ and the $[2]_q=q+1$ lines incident with $P$ a \emph{point-line configuration} 
(also called \emph{line pencil} in the literature). Without restricting to a hyperplane, we call the collection of a point $P$ and the $[3]_q=q^2+q+1$ lines incident with $P$ 
a \emph{full point-line configuration}. For each strong $(3 \mod 5)$-arc in $\PG(2,5)$ we can easily determine the contained point-line configuration. We coarsen our notion 
to a purely combinatorial description, i.e., from now on a (full) point-line configuration is the multiplicity of a point and the counting vector (or distribution) of the 
incident lines per type. As an example we consider the unique strong $(3 \mod 5)$-arc in $\PG(2,5)$ of cardinality $28$. It contains only three different point-line 
configurations:
\begin{itemize}
  \item a $0$-point with line distribution $A_1^2A_3^2B_2^2$;  
  \item a $1$-point with line distribution $A_3^3B_2^3$; and
  \item a $3$-point with line distribution $A_1^1B_2^5$
\end{itemize}   
cf.~\cite{rousseva2015structure}. From now on we assume that we try to classify non-lifted strong $(3\mod 5)$-arcs in $\PG(3,5)$ 
that do not contain a full hyperplane with a fixed target cardinality $\#\cK$, i.e., we prescribe the cardinality. The assumption that $\cK$ does not contain 
a full hyperplane excludes a few of the strong $(3\mod 5)$-arcs in $\PG(2,5)$, e.g.\ the residual arc of cardinality $93$ cannot occur, so that $\cK(H)\le 68$.

Our first kind of reasonings are simple cardinality bounds. Consider a line $L$ of a given type and the hyperplanes $H_0,\dots,H_q$ through it. Since 
\begin{equation}
  \#\cK =\sum_{i=0}^q \cK\!\left(H_i\right) \,-\,q\cK(L),
\end{equation}
it is sufficient to know the cardinality $\cK(L)$ of the line and the possible cardinalities $\cK\!\left(H_i\right)$ of the hyperplanes through $L$ in order 
to exclude a few cardinalities. As an example we consider a line of type $C_4$ or $C_5$, i.e.\ a special line of cardinality $13$. The possible cardinalities of the hyperplanes 
containing a line of type $C_4$ are contained in $\{33, 38,\dots, 63\}$ and those containing a line of type $C_5$ are contained in $\{38, 38,\dots, 63\}$. Thus, if $\#\cK<133$ 
or $\#\cK>313$, then $\cK$ cannot contain a line of type $C_4$ and if $\#\cK<163$ or $\#\cK>313$, then $\cK$ cannot a line of type $C_5$. So, prescribing the target cardinality 
$\#\cK$ of the arc may result in some excluded line types. As an example we consider $\#\cK=128$ and state that the line types contained in $\left\{B_1,B_4,B_6,B_7,C_1,C_2,C_3,C_4\right\}$ 
are excluded with the above reasoning. Clearly, a point-line configuration that contains an excluded line is excluded itself and a residual arc that contains an excluded 
point-line configuration is excluded itself. For our example $\#\cK=128$ the $8$ excluded line types imply the exclusion of $1221$ out of the $1288$ possible point-line configurations, 
which then imply the exclusion of $166$ out of the $178$ residual arcs containing at least one point with multiplicity $0$. Of course the exclusion of point-line configurations 
and residual arcs may imply tighter restrictions for the sets of possible cardinalities of a hyperplane containing a line of a certain type, so that the above reasoning 
may be applied iteratively. In our example the line types $C_5$ and $D_1$ are excluded in the next iteration, which then implies the exclusion of $19$ further point-line 
configurations and $6$ residual arcs. Instead of starting from a line, we can also start from a point-line configuration of a certain type, fix a contained line type 
and consider the possible cardinalities of the hyperplanes through this line. The difference to the previous reasoning is that we have fixed one of the hyperplane cardinalities, 
e.g.\ $\cK(H_0)$. If the target cardinality $\#\cK$ cannot be reached, then the starting point-line configuration can be excluded. In our example $13$ further point-line 
configurations are excluded this way. We can also exclude point-line configurations which are only contained in already excluded residual arcs ($13$ cases in our example). 
We remark that the execution of the checks described above is computationally cheap, i.e., the running time is negligible. So, we apply them recursively until no more 
new exclusions are found. In our example $\#\cK=128$ we end up with $7$ remaining line types, $22$ remaining point-line configurations, and $6$ remaining residual arcs.       

Next we try to enumerate candidates for full point-line configurations. Starting from a line type and a point multiplicity contained in this line type, we can loop over 
all multisets of $q+1$ point-line configurations that contain the prescribed line type (and point multiplicity) and go in line with $\#\cK$. Having a candidate for a 
full point-line configuration at hand we can eventually exclude it if it corresponds to a lifted arc. This local information must of course be consistent. So, assume that 
$f$ is a full point-line configuration for a line of type $t$ centered at a point of multiplicity $m$. If $t'\neq t$ is another line type containing a point of multiplicity $m$, 
then $f$ has to be contained in the list of possible full point-line configurations for a line of type $t'$ centered at a point of multiplicity $m$. If this is not the 
case, then we can remove the full point-line configuration $f$ from the list for $(m,t)$. In our example with $\cK=128$ we first enumerate $12$ possible 
full point-line configurations for a line of type $A_1$ centered at a line of multiplicity $0$. The consistency check leaves only the possibility
$A_1^6 A_2^{12} A_3^6 B_2^3 B_3^4$ with $\left(\lambda_0,\lambda_1,\lambda_2,\lambda_3\right)=(80,40,20,16)$. Lines of type $B_5$ centered at a point of multiplicity $1$ 
do not admit a consistent full point-line configuration (in the above sense). Thus lines of type $B_5$ are excluded and we can restart with the simple cardinality bounds 
ending up with $6$ remaining line types, $16$ remaining point-line configurations, and $4$ remaining residual arcs. These possibilities correspond to exactly 
those substructures that occur in the strong $(3 \mod 5)$-arc of cardinality $128$ in $\PG(3,5)$ that is combinatorially described in 
Theorem~\ref{thm_3_mod_5_arcs_pg_3_5_card_small_or_large}. 

It may happen that the exhaustive enumeration of candidates for full point-line configurations via line types results in no further exclusions or is computationally too expensive. 
Alternatively, we may also start from a residual arc. For each contained point-line configuration we can exhaustively enumerate the candidates for full point-line configurations 
via the contained line types. The computational advantage for the enumeration is that one point-line configuration is already fixed. For the same point-line 
configuration $t$ the possibilities for the full point-line configurations have to be consistent for all lines contained in $t$. As a further consistency check we also compute the 
point multiplicity distributions $\mathbf{\lambda}=\left(\lambda_0,\lambda_1,\lambda_2,\lambda_3\right)$ for the full point-line configurations. A given full point-line 
configuration $f$ with point multiplicity distribution $\mathbf{\lambda}$ is inconsistent if there exists a point-line configuration $t$ (in our currently considered residual arc) 
and a line type in $t$ that does not admit a full point-line configuration $f'$ with point multiplicity distribution $\mathbf{\lambda}$. As an example we consider 
$\#\cK=143$. Here the checks based on simple cardinality bounds leave $5$ lines, $15$ point-line configurations, and $6$ residual arcs. The exhaustive enumeration of candidates 
for full point-line configurations via line types results leaves at least one possible full point-line configuration for each of the remaining line types.\footnote{More  
sophisticated conclusions might still be drawn. For lines of type $D_1$ (centered at a point of multiplicity $3$) there remains a unique possibility leading to 
$\mathbf{\lambda}=(75,50,0,31)$. For lines of type $B_5$  (centered at a point of multiplicity $3$) there remains a unique possibility leading to $\mathbf{\lambda}=(65,65,0,26)$. 
In all other cases we have more than one remaining possibility for the full point-line configurations. Nevertheless, we might conclude that $\cK$ cannot contain a line of type 
$B_5$ and a line of type $D_1$.} The refined exhaustive enumeration of candidates for full point-line configurations via residual arcs can exclude the two remaining residual 
arcs of cardinality $43$. Applying the checks based on simple cardinality bounds leave $4$ line types, $9$ point-line configurations, and $3$ residual arcs. These possibilities 
correspond to exactly those substructures that occur in the strong $(3 \mod 5)$-arc of cardinality $143$ in $\PG(3,5)$ that is combinatorially described in 
Theorem~\ref{thm_3_mod_5_arcs_pg_3_5_card_small_or_large}. If some point-line configurations are already excluded, then the following check can sometimes eliminate a few possible 
full point-line configurations. If $l$ and $l'$ are two lines with different types and all point-line configurations that can be spanned by $l$ and $l'$ are already 
excluded, then in a given full point-line configuration not both entries, for the type of $l$ and for the type of $l'$, can be strictly positive. If $l$ and $l'$ are of 
the same type, then the condition for a full point-line configuration is that the corresponding entry is at most $1$. (Of course, the multiplicity of the intersection 
point of $l$ and $l'$ must coincide with the multiplicity of the center of the full point-line configuration.)        

For cardinalities $\#\cK$ that are neither small nor large the computation times for the exhaustive enumeration of full point-line configurations dramatically 
increase, so that further tools are needed. One, conceptionally easy, approach is to prescribe a point multiplicity distribution $\mathbf{\lambda}$ in a separate 
computation. If the computation ends up with a computational impossibility proof, then we can exclude this specific point multiplicity distribution $\mathbf{\lambda}$, 
which results in excluded full point-line configurations in the subsequent computations. As an example we consider cardinality $\#\cK=173$. Applying the 
exhaustive enumeration of candidates for full point-line configurations via line types results it turns out that for line type $C_5$ centered at points 
of multiplicity $3$ there are just two different full point-line configurations: $(6,0,0,1,1,0,0,10,5,0,0,0,0,0,0,4,4)$ and $(5,0,0,1,1,0,0,10,7,0,0,0,0,0,0,2,5)$. 
Both possibilities correspond to point multiplicity distribution $\mathbf{\lambda}=(64,33,37,22)$. This specific vector for $\mathbf{\lambda}$ can be computationally 
excluded in just a few seconds. Then explicitly excluding point multiplicity distribution $\mathbf{\lambda}=(64,33,37,22)$ computationally implies the existence of 
lines of type $C_5$, so that we can continue from an easier starting position.

\begin{Theorem}
  \label{thm_3_mod_5_arcs_pg_3_5_card_small_or_large}
  Let $\cK$ be a strong $(3\mod 5)$-arc in $\PG(3,5)$ that is neither lifted nor contains a full hyperplane. Then either 
  $178\le \#\cK\le 273$ or $\cK$ is given by one of the following three possibilities: 
  \begin{itemize}
    \item $\#\cK=128$, $\left(a_{18},a_{23},a_{28},a_{33}\right)=(20,80,16,40)$;
    \begin{center}\hspace*{-5mm}
    \begin{tabular}{lcccc}
    \hline 
    $\cK(P)$ for $P\!\in\!\cP$ & 0 & 1 & 2 & 3\\ 
    \hline
    $\#$              & 80 & 40 & 20 & 16 \\ 
    line distr. & $A_1^6 A_2^{12} A_3^6 B_2^3 B_3^4$ & $A_2^6 A_3^{12} B_2^6 B_3^4 B_8^3$ & $A_2^{12}B_3^{16}B_8^3$ & $A_1^6 B_2^{15} B_3^{10}$\\
    \hline
    \end{tabular}
    \end{center}
    \begin{center}\hspace*{-5mm}
    \begin{tabular}{lcccccc}
    \hline 
    line type & $A_1$ & $A_2$ & $A_3$ & $B_2$ & $B_3$ & $B_8$\\ 
    \hline
    $\#$              & 96 & 240 & 160 & 120 & 160 & 30 \\      
    hyp.~distr. & $23^5 28^1$ & $18^1 23^4 33^1$ & $18^2 23^2 28^1 33^1$ & $23^2 28^2 33^2$ & $23^3 33^3$ & $18^2 33^4$\\ 
    \hline
    \end{tabular}
    \end{center}
      Generator matrix given by the concatenation of
      $${\tiny
        \left(\begin{smallmatrix}
        0000000000000000000000000000000001111111111111111111111111111111\\  
        0001111111111111111111111111111110000000000000000000011111111111\\  
        1110001111111122222222333333334440001111111122233344400011122233\\  
        1140130112233311233344011122232240221112344400113311123333302300\\  
        \end{smallmatrix}\right)}
      $$
      and
      $${\tiny
        \left(\begin{smallmatrix}
        1111111111111111111111111111111111111111111111111111111111111111\\
        1111111112222222222222223333333333333333333344444444444444444444\\
        3333334440001112223334440001112223334444444400011111111222333444\\
        1112334441240120132440330244440443330111224412201113334002111144\\
        \end{smallmatrix}\right)}
      $$  
      with a corresponding automorphism group of order $7680$.
    \item $\#\cK=143$, $\left(a_{18},a_{23},a_{28},a_{33}\right)=(26,0,65,65)$;
    \begin{center}\hspace*{-5mm}
    \begin{tabular}{lcccc}
    \hline 
    $\cK(P)$ for $P\!\in\!\cP$ & 0 & 1 & 2 & 3\\ 
    \hline
    $\#$ & 65 & 65 & 0 & 26 \\ 
    line distr. & $A_1^6 A_3^{15} B_2^{10}$ & $A_3^{15} B_2^{10} B_5^6$ & & $A_1^3 B_2^{25} B_5^{3}$\\ 
    \hline
    \end{tabular}
    \end{center}
    \begin{center}\hspace*{-5mm}
    \begin{tabular}{lcccc}
    \hline 
    line type & $A_1$ & $A_3$ & $B_2$ & $B_5$ \\ 
    \hline 
    $\#$              & 78 & 325 & 325 & 78 \\      
    hyp.~distr. & $18^1 28^5$ & $18^2 28^2 33^2$ & $28^3 33^3$ & $18^1 33^5$\\
    \hline
    \end{tabular}
    \end{center}
      Generator matrix given by the concatenation of
      $${\tiny
        \left(\begin{smallmatrix}
        000000000000000000000000000000000111111111111111111111111111111111111111\\ 
        000000001111111111111111111111111000000000000000000001111111111111111111\\ 
        011111110000000112222222334444444000000011223344444440000000112233333334\\ 
        100033340122234020111444030123334000122223030111123330123334043400012341\\ 
        \end{smallmatrix}\right)}
      $$
      and
      $${\tiny
        \left(\begin{smallmatrix}
        11111111111111111111111111111111111111111111111111111111111111111111111\\
        11111122222222222222222222333333333333333333333333344444444444444444444\\
        44444400000001122222223344001122222223333333444444400112222222333333344\\
        11222402223331200011131302043400012341112224012333403011112333000122223\\
        \end{smallmatrix}\right)}
      $$  
      with a corresponding automorphism group of order $62400$.
    \item $\#\cK=168$, $\left(a_{28},a_{33},a_{43}\right) =(60,60,36)$;
    \begin{center}\hspace*{-5mm}
    \begin{tabular}{lcccc}
    \hline 
    $\cK(P)$ for $P\!\in\!\cP$ & 0 & 1 & 2 & 3\\ 
    \hline
    $\#$ & 60 & 60 & 0 & 36 \\ 
    line distr. & $A_1^6 A_3^{10} B_2^{15}$ & $A_3^{10}B_2^{15}B_5^6$ & & $A_1^2B_2^{25} B_5^2 D_1^2$ \\ 
    \hline
    \end{tabular}
    \end{center}
    \begin{center}\hspace*{-5mm}
    \begin{tabular}{lccccc}
    \hline 
    line type & $A_1$ & $A_3$ & $B_2$ & $B_5$ & $D_1$\\ 
    \hline 
    $\#$              & 72 & 200 & 450 & 72 & 12 \\      
    hyp.~distr. & $28^5 43^1$ & $28^3 33^3$ & $28^2 33^2 43^2$ & $33^5 43^1$ & $43^6$\\ 
    \hline
    \end{tabular}
    \end{center}
      Generator matrix given by the concatenation of
      $${\tiny
        \left(\begin{smallmatrix}
        00000000000000000000000000000000000000000001111111111111111111111111111111111111111111\\ 
        00000000000000000011111111111111111111111110000000000000000000000000111111111111111111\\ 
        00011111111111111100000111112222233333444440000011111222223333344444000001111122222333\\ 
        11100011122233344403444034440344403444034440222412444111340133300023000140344423334122\\ 
        \end{smallmatrix}\right)}
      $$
      and
      $${\tiny
        \left(\begin{smallmatrix}
        1111111111111111111111111111111111111111111111111111111111111111111111111111111111\\
        1111111222222222222222222222222233333333333333333333333334444444444444444444444444\\
        3344444222223333333333333334444400000111112222233333444440000011111222223333344444\\
        2301112012340001112223334440123412223233340344400014011120002301333111341244402224\\
        \end{smallmatrix}\right)}
      $$
      with a corresponding automorphism group of order $57600$.
  \end{itemize}
\end{Theorem}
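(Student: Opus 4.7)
The plan is to turn the methodology sketched in the paragraphs preceding the theorem into a systematic case analysis indexed by the prescribed cardinality $n=\#\cK$, where $n\equiv 3\pmod 5$. The input data is the complete list of strong $(3\mod 5)$-arcs in $\PG(2,5)$ (the possible residual arcs $\cK|_H$), together with their line-type distributions and their point--line configurations, as listed in the appendix.

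First I would establish the a priori range of admissible cardinalities. Lemma~\ref{lemma_t_mod_q_arc_sum_of_hyperplanes} gives $\#\cK\ge 3\cdot[3]_5=93$, while $\cK(P)\le 3$ forces $\#\cK\le 3\cdot[4]_5=468$. Since $\cK$ contains no full hyperplane, the residual arc of cardinality $93$ is excluded, so $\cK(H)\le 68$ for every hyperplane $H$; together with Lemma~\ref{lemma_strong_3_mod_5_pg_3_5_lower_bound_card} and its analogue for larger hyperplane multiplicities, this trims the candidate list substantially on both ends.

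For each remaining cardinality $n\in\{98,103,\dots,173\}$ and each $n$ in the upper tail $n>273$, I would run the fixed-point iteration described in the text. Namely: \textbf{(i)} compute, for every line type $\tau$, the set of hyperplane cardinalities realised by an $H\supset L$ with $L$ of type $\tau$; \textbf{(ii)} use the identity $\#\cK=\sum_{i=0}^{5}\cK(H_i)-5\cK(L)$ to discard those line types incompatible with $n$; \textbf{(iii)} propagate: discarded line types eliminate point--line configurations, which eliminate residual arcs, which shrink the hyperplane-cardinality sets from step (i); iterate until stable. Then enumerate candidate full point--line configurations as multisets of $[3]_5-[2]_5=q^2=25$ line slots grouped into $q+1=6$ hyperplane pencils, subject to the prescribed $n$, and close under the consistency rule that a full configuration centred at a point of multiplicity $m$ must appear in the admissible list for every line type through that point. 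When this terminates with an empty admissible set, $n$ is ruled out; when it terminates with a unique consistent structure, that structure matches one of the three tables in the statement, and the generator matrices provide witnesses whose automorphism groups are then checked (e.g.\ with \texttt{nauty}) to obtain the orders $7680$, $62400$, $57600$.

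The main obstacle, and where the paper already foreshadows the need for extra ideas, is the middle-range cardinalities where the above closure does not terminate cheaply: around $n\sim 150$--$170$ the number of candidate full point--line configurations blows up. Here I would prescribe a point-multiplicity distribution $\boldsymbol{\lambda}=(\lambda_0,\lambda_1,\lambda_2,\lambda_3)$ compatible with $\sum\lambda_i=[4]_5=156$ and $\sum i\lambda_i=n$, run the closure in the restricted universe, and iterate over all admissible $\boldsymbol{\lambda}$; each time the closure is empty, that $\boldsymbol{\lambda}$ is excluded, which in turn excludes further full point--line configurations at the original cardinality. The $n=173$ illustration in the text (where fixing $\boldsymbol{\lambda}=(64,33,37,22)$ is cheaply contradicted and then forces lines of type $C_5$) is the template. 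A secondary difficulty is certifying that the fixed-point iteration has really been closed, which I would address by re-running each elimination step after every exclusion until no further change occurs; for $n\in\{128,143,168\}$ the iteration stabilises exactly on the combinatorial data displayed in the three tables, and existence is then confirmed by the explicit generator matrices.
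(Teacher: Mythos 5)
Your overall strategy is the same as the paper's: iterate cardinality-based exclusions of line types, point--line configurations and residual arcs to a fixed point, enumerate candidate full point--line configurations with consistency checks, and for stubborn middle-range cardinalities branch on the point-multiplicity distribution $\boldsymbol{\lambda}$ (the paper does exactly this for $n=173$ and for all $178\le n\le 273$). Up to that point your proposal reproduces the paper's computational proof of impossibility for all $n\notin\{128,143,168\}$ outside the stated window.

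There is, however, a genuine gap at the final step for $n\in\{128,143,168\}$. The fixed-point closure only stabilises on \emph{combinatorial} data: a list of surviving line types, point--line configurations and residual arcs. That does not prove that every arc realising this data is the one displayed in the theorem, nor even that the data is realisable by a unique isomorphism class; exhibiting generator matrices certifies existence but not uniqueness, and the theorem is an exhaustive classification. The paper closes this gap with a separate exhaustive enumeration: an integer linear programming model with indicator variables $x_{P,i}$ for point multiplicities, congruence constraints $\sum_{P\in L}\sum_i i\,x_{P,i}=3+5y_L$ on lines and the analogous constraints on hyperplanes, with the residual arc of largest cardinality prescribed in a fixed hyperplane $H^\star$; all feasible solutions are generated (via \texttt{CPLEX populate}) and filtered up to isomorphism. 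This step is not cosmetic: for $n=168$ one of the two surviving $43$-point residual arcs admits no completion at all, and the other admits completions falling into \emph{two} isomorphism classes, one of which is a lifted arc that must be recognised and discarded. Your proposal has no mechanism that would detect either phenomenon, since both arcs at $n=168$ are indistinguishable at the level of the combinatorial closure. You need to add an explicit exhaustive realisation step (ILP, clique search, or orderly generation over point multiplicities) and an isomorphism filter, together with a test for liftedness of the resulting arcs, before the classification claim is complete.
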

\begin{proof}
  We have implemented the exclusion arguments described above in a computer program. Running the program for all cardinalities with 
  either $\#\cK\le 173$ or $\#\cK\ge 278$ resulted in a computational proof of impossibility in all cases except $\#\cK\in\{128,143,168,173\}$. 
  For $\#\cK=128$ the output of the program is:
{\tiny\begin{verbatim}
6 line types remain.
16 point-line types remain.
4 residual arcs remain.
Remaining line 0 with cardinality 3: 5 0 0 1 
Remaining line 2 with cardinality 3: 4 1 1 0 
Remaining line 5 with cardinality 3: 3 3 0 0 
Remaining line 6 with cardinality 8: 0 4 2 0 
Remaining line 7 with cardinality 8: 2 1 2 1 
Remaining line 8 with cardinality 8: 2 2 0 2 
Remaining point-line configuration 9: 2 2 2 2 2 6 6 
Remaining point-line configuration 10: 1 2 5 5 5 5 6 
Remaining point-line configuration 11: 0 2 2 2 5 5 5 
Remaining point-line configuration 12: 3 0 0 7 7 8 8 
Remaining point-line configuration 13: 2 2 2 2 7 7 7 
Remaining point-line configuration 14: 0 0 0 2 2 5 7 
Remaining point-line configuration 15: 1 2 2 5 5 7 8 
Remaining point-line configuration 16: 0 0 2 2 2 2 8 
Remaining point-line configuration 17: 3 0 8 8 8 8 8 
Remaining point-line configuration 18: 0 0 0 5 5 8 8 
Remaining point-line configuration 19: 1 5 5 5 8 8 8 
Remaining point-line configuration 21: 2 2 6 7 7 7 7 
Remaining point-line configuration 25: 0 2 2 5 7 7 8 
Remaining point-line configuration 35: 1 5 5 6 7 7 8 
Remaining point-line configuration 38: 3 7 7 7 8 8 8 
Remaining point-line configuration 55: 1 2 2 6 6 8 8 
Remaining hyperplane 3 with cardinality 18: 9 10 11 
Remaining hyperplane 4 with cardinality 23: 12 13 14 15 16 
Remaining hyperplane 5 with cardinality 28: 17 18 19 
Remaining hyperplane 9 with cardinality 33: 21 25 38 35 55 
\end{verbatim}}
  For $\#\cK=143$ the output of the program is:
{\tiny\begin{verbatim}
4 line types remain.
9 point-line types remain.
3 residual arcs remain.
Remaining line 0 with cardinality 3: 5 0 0 1 
Remaining line 3 with cardinality 8: 0 5 0 1 
Remaining line 5 with cardinality 3: 3 3 0 0 
Remaining line 8 with cardinality 8: 2 2 0 2 
Remaining point-line configuration 6: 3 0 0 0 3 3 3 
Remaining point-line configuration 7: 1 3 5 5 5 5 5 
Remaining point-line configuration 8: 0 0 5 5 5 5 5 
Remaining point-line configuration 17: 3 0 8 8 8 8 8 
Remaining point-line configuration 18: 0 0 0 5 5 8 8 
Remaining point-line configuration 19: 1 5 5 5 8 8 8 
Remaining point-line configuration 56: 1 3 3 5 5 8 8 
Remaining point-line configuration 57: 3 3 8 8 8 8 8 
Remaining point-line configuration 58: 0 5 5 5 8 8 8 
Remaining hyperplane 2 with cardinality 18: 6 7 8 
Remaining hyperplane 5 with cardinality 28: 17 18 19 
Remaining hyperplane 10 with cardinality 33: 56 57 58 
\end{verbatim}}
  For $\#\cK=168$ the output of the program is:
{\tiny\begin{verbatim}
5 line types remain.
10 point-line types remain.
4 residual arcs remain.
Remaining line 0 with cardinality 3: 5 0 0 1 
Remaining line 1 with cardinality 18: 0 0 0 6 
Remaining line 3 with cardinality 8: 0 5 0 1 
Remaining line 5 with cardinality 3: 3 3 0 0 
Remaining line 8 with cardinality 8: 2 2 0 2 
Remaining point-line configuration 17: 3 0 8 8 8 8 8 
Remaining point-line configuration 18: 0 0 0 5 5 8 8 
Remaining point-line configuration 19: 1 5 5 5 8 8 8 
Remaining point-line configuration 56: 1 3 3 5 5 8 8 
Remaining point-line configuration 57: 3 3 8 8 8 8 8 
Remaining point-line configuration 58: 0 5 5 5 8 8 8 
Remaining hyperplane 5 with cardinality 28: 17 18 19 
Remaining hyperplane 10 with cardinality 33: 56 57 58 
Remaining hyperplane 61 with cardinality 43: 487 488 489 490 
Remaining hyperplane 62 with cardinality 43: 487 488 489 490
\end{verbatim}}
In order to exhaustively enumerate the strong $(3\mod 5)$-arcs in $\PG(3,5)$ with a cardinality in $\{128,143,168\}$ we utilize an integer linear programming (ILP) formulation 
based on binary indicator variables $x_{P,i}\in\{0,1\}$ for the combinations of the $156$ points $P\in\cP$ and the point multiplicities $i\in \{0,1,2,3\}$ satisfying 
\begin{equation}
  \sum_{i=0}^3 x_{P,i}=1
\end{equation}
for each point $P\in\cP$. For each line $L$ we have 
\begin{equation}
  \sum_{P\in L}\sum_{i=0}^3 x_{P,i}=3+5y_L,
\end{equation} 
where $y_L\in\N_0$ with $y_L\le 3$. Similarly, for each hyperplane $H$ we have
\begin{equation}
  \sum_{P\in H}\sum_{i=0}^3 x_{P,i}=18+5z_H,
\end{equation} 
where $z_H\in\N_0$ with $z_H\le 15$. The restrictions on the cardinality of lines and hyperplanes implied by the above data result in improved upper bounds 
for $y_L$ and $z_H$, respectively. I.e., if the maximum line multiplicity is $8$, then we can use $y_L\le 1$ for all lines $L$, and if the maximum hyperplane 
multiplicity is $43$, then we can use $z_H\le 5$ for all hyperplanes $H$. Additionally we implement the stored isomorphism type of the residual arc with the largest 
possible cardinality by prescribing the $x_{P,i}$ for all points $P$ contained in an arbitrary but fixed hyperplane $H^\star$. Using \texttt{CPLEX} 
and the \texttt{populate} command we enumerate all feasible solutions and filter the corresponding codes up to isomorphy. For $\#\cK=128$ and $\#\cK=143$ there is a unique 
possible residual arc of cardinality $33$ and all ILP solutions correspond to the stated example. For $\#\cK=168$ there are two possible residual arcs of cardinality 
$43$. For hyperplane nr~$61$ there are no ILP solutions at all. 
For hyperplane nr~$62$ (note the equal combinatorial data) there are ILP solutions, 
which correspond to 
arcs with a generator matrix given by the concatenation of
$$
  {\tiny\left(\begin{smallmatrix} 
    000000000000000000000000000000000000000000011111111111111111111111111111111111111111\\
    000000000000000000111111111111111111111111100000000000000000000000001111111111111111\\
    000111111111111111000001111122222333334444400000111112222233333444440000011111222223\\
    111000111222333444034440344403444034440344401234012340123401234012340111201112011120\\
  \end{smallmatrix}\right)}
$$
and
$$
  {\tiny\left(\begin{smallmatrix}
    111111111111111111111111111111111111111111111111111111111111111111111111111111111111\\
    111111111222222222222222222222222233333333333333333333333334444444444444444444444444\\
    333344444000001111122222333334444400000111112222233333444440000011111222223333344444\\
    111201112022240222402224022240222400023000230002300023000230111201112011120111201112\\
  \end{smallmatrix}\right)}   
$$
or
$$
  {\tiny\left(\begin{smallmatrix} 
    000000000000000000000000000000000000000000011111111111111111111111111111111111111111\\
    000000000000000000111111111111111111111111100000000000000000000000001111111111111111\\
    000111111111111111000001111122222333334444400000111112222233333444440000011111222223\\
    111000111222333444034440344403444034440344402224124441113401333000230001403444233341\\
  \end{smallmatrix}\right)}
$$   
and
$$
  {\tiny\left(\begin{smallmatrix}
    111111111111111111111111111111111111111111111111111111111111111111111111111111111111\\
    111111111222222222222222222222222233333333333333333333333334444444444444444444444444\\
    333344444222223333333333333334444400000111112222233333444440000011111222223333344444\\
    222301112012340001112223334440123412223233340344400014011120002301333111341244402224\\
  \end{smallmatrix}\right)}.
$$
The first arc is lifted\footnote{In the special situation we could have avoided the corresponding ILP solutions by requiring a lower cardinality bound of $28$ for every  
hyperplane. For the general case we remark that it is easily possible to exclude lifted arcs by additional constraints (and variables). However, such a model extension 
resulted in significantly increased computation times in our experiments, so that we do not go into details here.} and the second is the stated example.

For $\#\cK=173$ we exclude the specific cases of a point multiplicity distribution
\begin{eqnarray*}
  \mathbf{\lambda}&\in&\big\{(64,33,37,22), (55,50,30,21), (54,53,27,22),\\ 
  && (60,40,35,21), (58,46,29,23), (63,36,34,23)\big\}
\end{eqnarray*}
in separate computations. Afterwards the program yielded a computational proof of non-existence.  
\end{proof}
We remark that the computations times for $\#\cK\le 158$ (without the ILP computations) are negligible. For $\#\cK=163$ the computation took 2~minutes,  
for $\#\cK=168$ the computation took less than 27~minutes, and for $\#\cK=173$ the computation took slightly less than 869~minutes. The latter computation 
times heavily depend on the algorithmic details of the implementation as well as on the order which specific substructure is tried to exclude first and 
parameters controlling the abort of exhaustive enumerations. We remark that the exclusion of specific point multiplicity distributions can of course be 
performed in parallel. Using the same techniques as in the proof of Theorem~\ref{thm_3_mod_5_arcs_pg_3_5_card_small_or_large}, but without explicitly 
listing the separately excluded point multiplicity distributions $\mathcal{E}_n$, we computationally show that $178\le\#\cK\le 273$ is impossible. 
In Table~\ref{tab_cluster} we state the number of cases $\#\mathcal{E}_n$ and the corresponding computation times. All jobs ran in parallel on a 
large-scale computing cluster of the University of Bayreuth. Candidates for $\#\mathcal{E}_n$ were obtained by sampling and choosing the most frequent ones.  

\begin{table}[htp]
\begin{center}
  \begin{tabular}{lrrrrrrrrrr}
  \hline
  $n$               & 178  & 183 & 188 & 193 & 198  & 203  & 208  & 213  & 218  & 223  \\ 
  $\#\mathcal{E}_n$ & 31   & 36  & 46  & 75  & 180  & 174  & 176  & 179  & 177  & 179  \\ 
  time in h         & 3078 & 351 & 998 & 972 & 1434 & 1787 & 2368 & 2661 & 3214 & 3110 \\
  \hline
  $n$               & 228  & 233  & 238  & 243  & 248  & 253  & 258  & 263  & 268  & 273  \\ 
  $\#\mathcal{E}_n$ & 176  & 180  & 177  & 170  & 176  & 170  & 161  & 173  & 148  & 111  \\ 
  time in h         & 3477 & 3448 & 3396 & 3150 & 2848 & 2042 & 1752 & 855  & 911  & 683 \\
  \hline
  \end{tabular}
  \caption{Details for the computations for $178\le \#\cK\le 273$.}
  \label{tab_cluster}
\end{center}
\end{table}

\begin{Theorem}
  Let $\cK$ be a strong $(3\mod 5)$-arc in $\PG(3,5)$ that is neither lifted nor contains a full hyperplane. Then $\#\cK\in\{128,143,168\}$ and $\cK$ 
  is given as specified in Theorem~\ref{thm_3_mod_5_arcs_pg_3_5_card_small_or_large}. 
\end{Theorem}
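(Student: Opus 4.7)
The plan is to combine Theorem~\ref{thm_3_mod_5_arcs_pg_3_5_card_small_or_large}, which already handles all cardinalities with either $\#\cK\le 173$ or $\#\cK\ge 278$, with a separate treatment of the middle range $178\le \#\cK\le 273$. For the latter, I would run exactly the same exclusion machinery (recursive elimination of line types, point-line configurations, and residual arcs via simple cardinality bounds across the hyperplane pencil through a line, plus consistency checks on candidate full point-line configurations) for each cardinality $n\equiv 3\pmod 5$ in the range, terminating at a computational proof of impossibility. Since $\cK$ is assumed to be non-lifted and have no full hyperplane, the admissible residual arcs $\cK|_H$ are those listed in the appendix subject to the restrictions already used in the proof of Theorem~\ref{thm_3_mod_5_arcs_pg_3_5_card_small_or_large}, so the same bookkeeping applies verbatim.

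The main obstacle is that, as noted in the paper, the running times blow up once $\#\cK$ leaves the extreme ranges: for $\#\cK=173$ the enumeration already took nearly $869$ minutes, and for the larger cardinalities the naive enumeration of candidate full point-line configurations is no longer tractable. To get past this, the strategy is to prescribe a point multiplicity distribution $\mathbf{\lambda}=(\lambda_0,\lambda_1,\lambda_2,\lambda_3)$ as an extra input to the solver; each such $\mathbf{\lambda}$ yields a much smaller subproblem that can be computationally refuted, and aggregating these refutations into a list $\mathcal{E}_n$ of excluded distributions then feeds back into the main exclusion loop by pruning every candidate full point-line configuration whose $\mathbf{\lambda}$ lies in $\mathcal{E}_n$. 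After enough distributions are excluded, the residual search collapses and the main run certifies non-existence.

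Concretely, for each $n\in\{178,183,\dots,273\}$ I would (i) sample a large number of tentative completions to harvest the most frequent $\mathbf{\lambda}$-vectors, (ii) dispatch each $\mathbf{\lambda}\in\mathcal{E}_n$ as an independent job on the cluster and verify computational impossibility, and (iii) then rerun the global exclusion procedure with $\mathcal{E}_n$ hardcoded, obtaining a proof of non-existence for that cardinality. The parallelism is essential, as the cumulative runtime (summarised in Table~\ref{tab_cluster}) reaches thousands of CPU-hours per cardinality. Once every $n$ in the intermediate range is ruled out, combined with Theorem~\ref{thm_3_mod_5_arcs_pg_3_5_card_small_or_large} the only surviving possibilities are $\#\cK\in\{128,143,168\}$ with the explicit arcs already classified there, which is exactly the statement of the theorem.

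The hardest and most fragile step will be item (i): producing a list $\mathcal{E}_n$ that is both complete (every $\mathbf{\lambda}$ consistent with the simple cardinality bounds must be either refuted individually or ruled out by the subsequent global run) and small enough to fit within the cluster budget. This is a heuristic engineering problem rather than a mathematical one, but it controls the feasibility of the whole argument; once $\mathcal{E}_n$ is correctly populated, the underlying exclusion calculus is the same deterministic recursion that already settled the small and large cardinality regimes.
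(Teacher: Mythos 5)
Your proposal matches the paper's own argument: the middle range $178\le\#\cK\le 273$ is settled by the same recursive exclusion machinery as in Theorem~\ref{thm_3_mod_5_arcs_pg_3_5_card_small_or_large}, augmented by separately refuting sampled point multiplicity distributions $\mathcal{E}_n$ on a computing cluster (Table~\ref{tab_cluster}), and then combining with the earlier theorem. This is essentially identical to the approach taken in the paper, including the observation that $\mathcal{E}_n$ need only be large enough for the subsequent global run to terminate in an impossibility proof.
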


While the utilized method is applicable in principle also for larger field sizes, it remains an algorithmical challenge to speed up the computations so that 
the strong $(3\mod 7)$-arc in $\PG(3,7)$ may be classified in reasonable time. As the found non-lifted strong $(3\mod 5)$-arcs in $\PG(3,5)$ have quite some 
automorphisms one may also heuristically search non-lifted $(3\mod q)$-arcs in $\PG(3,q)$ by prescribing suitable subgroups of the automorphism group or try 
to unveil their geometric structure

\begin{Conjecture}
  Every strong $(3\mod 5)$-arc in $\PG(v-1,5)$ is lifted for $v\ge 5$.
\end{Conjecture}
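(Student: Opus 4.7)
The plan is to proceed by induction on $v$, using Theorem~\ref{thm_everything_is_lifted} as the engine. For $v\ge 6$, assuming the conjecture holds in $\PG(v-2,5)$, every hyperplane restriction of a strong $(3\mod 5)$-arc $\cK$ in $\PG(v-1,5)$ is lifted, whence $\cK$ itself is lifted. The whole content of the conjecture therefore concentrates in the base case $v=5$, where Theorem~\ref{thm_3_mod_5_arcs_pg_3_5_card_small_or_large} ensures that each hyperplane restriction $\cK|_H$ is either lifted or one of the three exceptional arcs of cardinality $128$, $143$, $168$. It suffices to show that none of these three arcs can occur as a residual arc in a strong $(3\mod 5)$-arc $\cK$ in $\PG(4,5)$.

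For the $168$-arc I would proceed structurally. It contains twelve $D_1$-lines (all six points of multiplicity $3$). Fix such a line $L\subset H$; in $\PG(4,5)$ there are $[3]_5=31$ planes through $L$, of which exactly $[2]_5=6$ lie in $H$ (these are the $43$-hyperplanes of the $168$-arc through $L$), while each of the remaining $25$ planes $\pi$ restricts to a strong $(3\mod 5)$-arc in $\PG(2,5)$ containing $L$ as a $D_1$-line. Cataloging from the appendix which $\PG(2,5)$-cardinalities admit a $D_1$-line, and which multiplicity patterns are realized on the affine part $\pi\setminus L$, yields tight constraints on the multiplicity of every point outside $H$. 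Aggregating these constraints through all twelve $D_1$-lines and matching against the fine combinatorial data of the $168$-arc (the $\lambda_i$ and the spectrum $a_{43}=36$) should force a numerical contradiction in the global multiplicity function of $\cK$.

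For the $128$-arc and $143$-arc, which contain no $D_1$-lines, the structural route is more delicate, so I would lift the algorithmic framework of the proof of Theorem~\ref{thm_3_mod_5_arcs_pg_3_5_card_small_or_large} up one dimension. Replace line types by plane types drawn from the classification of strong $(3\mod 5)$-arcs in $\PG(3,5)$, define \emph{point-plane configurations} and full point-plane configurations analogously, and iterate the same simple cardinality bounds, consistency checks, and ILP finishes, prescribing a fixed hyperplane of the forbidden type. The distinguishing features of the two arcs, especially $\lambda_2=0$ for the $143$-arc and the sparse hyperplane spectrum $(a_{18},a_{23},a_{28},a_{33})=(20,80,16,40)$ of the $128$-arc, should yield very restrictive local information, in particular on how often a $\PG(3,5)$-residual of cardinality $18$ can appear as a plane of $\PG(4,5)$.

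The main obstacle will be computational scale: the catalogue of residual types in $\PG(3,5)$ dwarfs that of $\PG(2,5)$, so a naive lift of the enumeration used in Theorem~\ref{thm_3_mod_5_arcs_pg_3_5_card_small_or_large} is likely intractable without substantial pruning. The real challenge is therefore to isolate, in advance of the enumeration, rigid local obstructions keyed to the automorphism groups of orders $7680$, $62400$, $57600$ of the three exceptional arcs, most plausibly via the $D_1$-line count for the $168$-arc, the absence of $2$-points in the $143$-arc, and the rigidity of $18$-hyperplanes in the $128$-arc. Any such obstruction, combined with the inductive step above, would close the conjecture.
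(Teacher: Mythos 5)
This statement is a \emph{conjecture}: the paper offers no proof of it, so there is nothing to compare your attempt against, and your proposal does not close the gap either. Your inductive scaffold is sound as far as it goes: for $v\ge 6$ every hyperplane restriction lives in $\PG(v-2,5)$ with $v-1\ge 5$, so the induction hypothesis plus Theorem~\ref{thm_everything_is_lifted} does reduce everything to the base case $v=5$. But your treatment of that base case is a research plan, not an argument. Every decisive step is conditional (``should force a numerical contradiction'', ``should yield very restrictive local information'', ``any such obstruction \dots would close the conjecture''), and you yourself flag that the lifted enumeration is ``likely intractable without substantial pruning''. Nothing in the proposal actually rules out any of the three exceptional arcs as a residual of a strong $(3\mod 5)$-arc in $\PG(4,5)$.

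There is also a concrete logical gap in the base case. Theorem~\ref{thm_3_mod_5_arcs_pg_3_5_card_small_or_large} (and the unconditional theorem following it) classifies strong $(3\mod 5)$-arcs in $\PG(3,5)$ that are \emph{neither lifted nor contain a full hyperplane in their support}. So a hyperplane restriction $\cK|_H$ in your base case has three possible shapes, not two: lifted, one of the three exceptions, or an arc whose support contains a full plane. The last class is not asserted to be lifted anywhere in the paper --- Lemma~\ref{lemma_t_mod_q_hyperplane_in_support} lets one subtract $\chi_\pi$, but the resulting $(2\mod 5)$-arc need not be strong, so Theorem~\ref{thm_hyperplane_in_support_special} does not apply to it. Your claim that ``each hyperplane restriction is either lifted or one of the three exceptional arcs'' therefore overstates what is known, and even a complete exclusion of the $128$-, $143$- and $168$-point residuals would not let you invoke Theorem~\ref{thm_everything_is_lifted}, which requires \emph{every} hyperplane restriction to be lifted. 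Any genuine proof of the base case must also dispose of residuals containing a full plane in their support.
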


\section{The non-existence of $\mathbf{(104,22)}$-arcs in $\mathbf{\PG(3,5)}$}
\label{sec_non-existence_104_22_4_5}

The aim of this section is to show the non-existence of a $(104,22)$-arc in $\PG(3,5)$, i.e., to fix the gap in the corresponding proof of \cite{landjev2016non} due to the 
flawed classification of strong $(3\mod 5)$ in $\PG(3,5)$. By $\gamma_i$ we denote the maximum multiplicity of an $i$-space, i.e., $\gamma_1$ is the maximum point multiplicity.  

\begin{Lemma}
  \label{lemma_residual}
  The maximum multiplicity of a hyperline, i.e., a subspace of codimension $2$, in an $m$-hyperplane of an $(n,\le s)$-arc $\cK$ in $\PG(v-1,q)$, where $v\ge 3$, is 
  at most $\left\lfloor (sq+m-n)/q\right\rfloor$.
\end{Lemma}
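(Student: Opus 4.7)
The plan is to use the standard pencil argument through a hyperline. Fix an $m$-hyperplane $H$ and a hyperline $L\subseteq H$, i.e., a subspace of codimension $2$. Since $[2]_q=q+1$, there are exactly $q+1$ hyperplanes through $L$; call them $H=H_0,H_1,\dots,H_q$.

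The key identity comes from double counting. Every point of $L$ lies in all $q+1$ of the $H_i$, while every point of $\cP\setminus L$ lies in exactly one of them (the hyperplanes through $L$ form a pencil partitioning the complement of $L$). Therefore
\begin{equation*}
  \sum_{i=0}^{q} \cK(H_i) \;=\; (q+1)\cK(L) + \bigl(n-\cK(L)\bigr) \;=\; n + q\,\cK(L).
\end{equation*}
Now use the two kinds of information we have on the right-hand side: $\cK(H_0)=m$ by assumption, and $\cK(H_i)\le s$ for $i=1,\dots,q$ because $\cK$ is an $(n,\le s)$-arc. Hence
\begin{equation*}
  n + q\,\cK(L) \;\le\; m + qs,
\end{equation*}
which rearranges to $\cK(L) \le (sq+m-n)/q$. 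Since $\cK(L)\in\N_0$, we may take the floor, obtaining the claimed bound $\cK(L)\le \lfloor (sq+m-n)/q\rfloor$.

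There is no real obstacle: the argument is a one-line consequence of the standard pencil identity plus the hyperplane bound. The only small thing to verify carefully is the combinatorial identity for the pencil through a codimension-$2$ subspace, and this is immediate from the fact that any point outside $L$ spans, together with $L$, a unique hyperplane.
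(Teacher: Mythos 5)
Your proposal is correct and follows exactly the same route as the paper's proof: the pencil identity $\sum_{i=0}^{q}\cK(H_i)=n+q\,\cK(L)$ combined with $\cK(H_0)=m$ and $\cK(H_i)\le s$ for the remaining $q$ hyperplanes, then rounding down because $\cK(L)$ is a non-negative integer. No issues.
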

\begin{proof}
  Let $S$ be an arbitrary hyperline and $H_0,\dots, H_q$ the $q+1$ hyperplanes through $S$. With this and $\cK(H_0)=m$ we have 
  $$
    n=\sum_{i=0}^q \cK(H_i)-q\cdot \cK(S)\le m+q\cdot s-q\cdot \cK(S),  
  $$
  so that
  $$
    \cK(S)\le \frac{qs+m-n}{q}.
  $$
  Note that $\cK(S)$ is a non-negative integer and $m\le s$.
\end{proof}

\begin{Lemma}
  \label{lemma_104_22_3_5_straightforward}
  Let $\cK$ be a $(104,22)$-arc in $\PG(3,5)$ with spectrum $\left(a_i\right)$. Then:
  \begin{enumerate}
    \item[(a)] The maximal multiplicity of a line in an $m$-plane is $\left\lfloor(6+m)/5\right\rfloor$.
    \item[(b)] $\gamma_1=1$, $\gamma_2=5$, $\gamma_3=22$.
    \item[(c)] There do not exist planes with $2$, $3$, $7$, $8$, $12$, $13$, $17$, or $18$ points.
  \end{enumerate}  
\end{Lemma}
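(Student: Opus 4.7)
The plan is as follows. Part~(a) will follow immediately from Lemma~\ref{lemma_residual} with $v=4$, $q=5$, $n=104$, $s=22$, since hyperlines in $\PG(3,5)$ are exactly lines: the bound becomes $\lfloor(5\cdot 22+m-104)/5\rfloor=\lfloor(m+6)/5\rfloor$.

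For~(b), $\gamma_3=22$ is built into the definition of a $(104,22)$-arc, and applying~(a) to a $22$-plane gives $\gamma_2\le\lfloor 28/5\rfloor=5$. For any point $P\in\cP$ I would use that the $[3]_5=31$ lines through $P$ partition $\cP\setminus\{P\}$ into blocks of size $5$ to obtain
\[
\sum_{L\ni P}\cK(L)\;=\;n+30\,\cK(P)\;=\;104+30\,\cK(P).
\]
Combining with $\cK(L)\le 5$ forces $\cK(P)\le 51/30<2$, hence $\gamma_1\le 1$; equality follows from $n>0$. Evaluating the same sum at a $1$-point yields $134>31\cdot 4$, so at least one line through it has multiplicity $5$, proving $\gamma_2=5$.

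For~(c), for each $m$ in the stated list I would suppose an $m$-plane $H$ exists. Since $\gamma_1=1$, $H$ contains exactly $m$ one-points and $31-m$ zero-points, and by~(a) every line in $H$ has multiplicity at most $M:=\lfloor(m+6)/5\rfloor\in\{1,2,3,4\}$. The standard equations for $\cK|_H$,
\[
\sum_i a_i=31,\quad\sum_i i\,a_i=6m,\quad\sum_i\binom{i}{2}a_i=\binom{m}{2},
\]
together with $a_i=0$ for $i>M$, immediately settle the low-$m$ cases: $m\in\{2,3\}$ contradicts the third equation; $m=8$ forces $a_1=-8<0$; for $m\in\{7,13\}$ the unique non-negative solution makes the line-type profile at any zero-point require $2t_2=7$ (respectively $3t_3=13$), both impossible.

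The harder cases $m\in\{12,17,18\}$ form the main obstacle, because the standard equations admit several non-negative solutions. The strategy is to enumerate the possible line-type profiles $(t_0,\dots,t_M)$ at a $1$-point via $\sum_i t_i=6$ and $\sum_i i\,t_i=m+5$; the resulting short list (just one or two profiles in each case) imposes linear $(1\text{-point},i\text{-line})$ incidence identities that typically determine $(a_i)$ uniquely, and that last vector is then excluded by a short incidence argument on low-multiplicity lines. Concretely, for $m=18$ the sole profile $(0,0,0,1,5)$ gives the non-integer equation $5m=4a_4$; for $m=12$ the unique profile $(0,0,1,5)$ pins down $(a_0,a_1,a_2,a_3)=(5,0,6,20)$, after which the $11$ zero-points forced to be of type $(t_0,t_2,t_3)=(2,0,4)$ must inject into the $\binom{a_0}{2}=\binom{5}{2}=10$ pairs of zero-lines; for $m=17$ the reduction leaves the vector $(2,0,4,6,19)$, excluded because the two zero-lines of $H$ meet at a zero-point with $t_0\ge 2$, outside the admissible profile catalogue. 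The principal difficulty is the systematic bookkeeping required to exhaust the finitely many profile/$(a_i)$ combinations, but each is killed by a short counting argument of the same flavor.
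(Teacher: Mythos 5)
Your parts (a) and (b) are correct and essentially identical to the paper's: (a) is a direct application of Lemma~\ref{lemma_residual}, and (b) is the same line-pencil count through a point (the paper phrases the $\gamma_2=5$ step as a contradiction from $\gamma_2\le 4$ rather than evaluating the pencil sum at a $1$-point, but the computation is the same). Part (c) is where you genuinely diverge. The paper disposes of all eight values of $m$ in one line by combining (a) with the classical non-existence of $(2,1)$-, $(7,2)$-, $(12,3)$- and $(17,4)$-arcs in $\PG(2,5)$, i.e.\ the known values $m_2(2,5)=6$, $m_3(2,5)=11$, $m_4(2,5)=16$ (the cases $m=3,8,13,18$ then follow by deleting a point). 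You instead reprove the needed instances from scratch via the planar standard equations and line-pencil profiles; I checked your case analysis and it is sound: $m\in\{2,3\}$ and $m=8$ die on the standard equations alone, $m\in\{7,13\}$ on a parity/divisibility obstruction at a $0$-point, $m=18$ on the non-integrality of $4a_4=90$, and for $m=12$ and $m=17$ the profile constraints do pin down the spectra $(5,0,6,20)$ and $(2,0,4,6,19)$ uniquely, which are then killed by your counting of $0$-point/$0$-line incidences (at least $11$ points of type $(2,0,4)$ injecting into $\binom{5}{2}=10$ pairs of $0$-lines, resp.\ the intersection point of the two $0$-lines having pencil sum at most $16<17$). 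What the paper's route buys is brevity by leaning on standard facts about maximal arcs in $\PG(2,5)$; what yours buys is self-containedness, at the cost of the bookkeeping you acknowledge. Either is acceptable here.
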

\begin{proof}$\,$\\[-3mm]
  \begin{enumerate}
    \item[(a)] Apply Lemma~\ref{lemma_residual}. 
    \item[(b)] $\gamma_3=22$ follows from the definition of the arc. (a) implies $\gamma_2\le 5$. If $\gamma_2\le 4$, then considering the $31$ lines 
               through a point of multiplicity at least $1$ would yield $\#\cK\le 1+31\cdot 3<104$. Obviously $\gamma_1\ge 1$. Considering the $31$ lines 
               through a point of multiplicity at least $2$ would yield $\#\cK\le 5+31\cdot 3<104$.
    \item[(c)] Using (a), this follows from the non-existence of $(2,1)$-, $(7,2)$-, $(12,3)-$, and $(17,4)$-arcs in $\PG(2,5)$.
  \end{enumerate}
\end{proof}    
Thus, a $(104,22)$-arc $\cK$ in $\PG(3,5)$ is $3$-quasi-divisible with divisor $5$ and gives rise to a $(3\mod 5)$-arc $\widetilde{\cK}$ in $\PG(3,5)$. 
Since the possibilities for $\widetilde{\cK}$ are completely classified, we can utilize an integer linear programming (ILP) formulation for $\cK$ given $\widetilde{\cK}$. 
We use binary variables $x_P\in\{0,1\}$, with the meaning $x_P=\cK(P)$, for all $P\in\cP$. For each hyperplane $H\in\cH$ we require
\begin{equation}
  5y_H+\sum_{P\in\cP\,:\, P\le H} x_P=22-\widetilde{\cK}(\widetilde{H}),
\end{equation} 
where $y_H=0$ if $\widetilde{\cK}(\widetilde{H})=0$. In general the $y_H$ are non-negative integers and model the fact that $\cK(H)\equiv 22-\widetilde{\cK}(\widetilde{H})\pmod 5$. 
If $\cK(H)\equiv 22\pmod 5$, then $\cK(H)=22$ due to Lemma~\ref{lemma_104_22_3_5_straightforward}, which is translated to $y_H=0$ in that case. 

The infeasibility of those ILPs for all different choices for $\widetilde{\cK}$ yields:
\begin{Theorem}
  \label{thm_no_104_22}
  No $(104,22)$-arc in $\PG(3,5)$ exists.
\end{Theorem}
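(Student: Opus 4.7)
The plan is to reduce the existence of $\cK$ to a finite family of integer linear programming (ILP) infeasibility problems, one for each isomorphism type of strong $(3\mod 5)$-arc in $\PG(3,5)$.

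First I would establish that $\cK$ is $3$-quasi-divisible with divisor $5$. Lemma~\ref{lemma_104_22_3_5_straightforward}(c) rules out plane multiplicities in $\{2,3,7,8,12,13,17,18\}$, so the admissible multiplicities $\{0,1,4,5,6,9,10,11,14,15,16,19,20,21,22\}$ lie only in the residues $\{0,1,2,4\}$ modulo $5$; since $n=104\equiv 4\pmod 5$ and $s=22\equiv n+3\pmod 5$, the definition of $t$-quasi-divisibility from Section~\ref{sec_preliminaries} applies with $t=3$. Equation~(\ref{eq_quasidivisible_dual}) then yields a dual arc $\widetilde{\cK}\colon \cH\to\{0,1,2,3\}$, which is a strong $(3\mod 5)$-arc in $\PG(3,5)$ by Theorem~\ref{Theorem_quasidivisile_implies_highly_divisible}.

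Next I would invoke the full classification of strong $(3\mod 5)$-arcs in $\PG(3,5)$ from Section~\ref{section_strong_3_mod_5_in_pg_3_5}. Every such arc falls into one of three classes: (a) lifted arcs obtained via Theorem~\ref{lifted_arcs_construction} from one of the strong $(3\mod 5)$-arcs in $\PG(2,5)$ enumerated in Table~\ref{tab_isomorphism_types_of_3_mod_5_arcs_pg_2_5}, (b) arcs containing a full hyperplane in their support, which by iterated application of Lemma~\ref{lemma_t_mod_q_hyperplane_in_support} reduce to a strong $(t'\mod 5)$-arc with $t'<3$ whose structure is already classified, or (c) one of the three exceptional arcs of cardinality $128$, $143$, $168$ from Theorem~\ref{thm_3_mod_5_arcs_pg_3_5_card_small_or_large}. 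This gives an explicit finite list of candidates for $\widetilde{\cK}$. For each such candidate I would set up the following ILP. Since $\gamma_1=1$ by Lemma~\ref{lemma_104_22_3_5_straightforward}(b), introduce binary variables $x_P\in\{0,1\}$ identified with $\cK(P)$ for every point $P\in\cP$. For every hyperplane $H$ with dual point $\widetilde H$ impose
\begin{equation*}
  \sum_{P\le H} x_P \,+\, 5y_H \,=\, 22 - \widetilde{\cK}\!\left(\widetilde H\right),
\end{equation*}
where $y_H\in\N_0$ is a slack variable, fixed to $0$ whenever $\widetilde{\cK}(\widetilde H)=0$ (since then $\cK(H)\equiv 22\pmod 5$ forces $\cK(H)=22$ by $\gamma_3=22$), together with the global constraint $\sum_{P\in\cP} x_P = 104$. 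Infeasibility of every one of these ILPs yields the desired contradiction.

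The main obstacle is the sheer size of the classification combined with solver running time. Lifted candidates benefit from strong structural constraints: a lifted $\widetilde{\cK}$ forces many hyperplanes $H$ to have $\widetilde{\cK}(\widetilde H)=0$, pinning $\cK(H)$ to exactly $22$ for those $H$ and usually collapsing the ILP rapidly. The qualitatively new difficulty lies with the three sporadic $\widetilde{\cK}$ of cardinality $128$, $143$, $168$ unearthed in Section~\ref{section_strong_3_mod_5_in_pg_3_5}; these were precisely the configurations missed by the original argument of \cite{landjev2016non}, and each must be processed with its own ILP run in order to close the gap.
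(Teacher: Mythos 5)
Your proposal follows essentially the same route as the paper: establish $3$-quasi-divisibility via Lemma~\ref{lemma_104_22_3_5_straightforward}, pass to the dual strong $(3\mod 5)$-arc $\widetilde{\cK}$, invoke the classification from Section~\ref{section_strong_3_mod_5_in_pg_3_5}, and verify infeasibility of the resulting ILPs (with the same variables $x_P$, slacks $y_H$, and the constraint $y_H=0$ when $\widetilde{\cK}(\widetilde H)=0$). The only cosmetic difference is how the full-hyperplane candidates are dispatched — the paper notes (in Subsection~\ref{subsec_shortcuts}) that $\widetilde{\cK}$ cannot contain a full hyperplane in its support by Theorem~\ref{thm_quasidivisible_extendability} and the non-existence of a $(105,22)$-arc, while you reduce them via Lemma~\ref{lemma_t_mod_q_hyperplane_in_support} — but this does not change the substance of the argument.
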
 
Using \texttt{CPLEX} all ILPs were solved in less than 2~minutes in total, which is mainly due to the fact that many hyperplanes have to be of multiplicity $22$ and these are exactly 
characterized by $\widetilde{\cK}$.

Similarly, as Lemma~\ref{lemma_104_22_3_5_straightforward} we can show:
\begin{Lemma}
  \label{lemma_103_22_3_5_straightforward}
  Let $\cK$ be a $(103,22)$-arc in $\PG(3,5)$ with spectrum $\left(a_i\right)$. Then:
  \begin{enumerate}
    \item[(a)] The maximal multiplicity of a line in an $m$-plane is $\left\lfloor(7+m)/5\right\rfloor$.
    \item[(b)] $\gamma_1=1$, $\gamma_2=5$, $\gamma_3=22$.
    \item[(c)] There do not exist planes with $2$, $7$, $12$, or $17$.
  \end{enumerate}  
\end{Lemma}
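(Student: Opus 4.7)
The plan is to mirror the proof of Lemma~\ref{lemma_104_22_3_5_straightforward} with the cardinality $104$ replaced by $103$; the statement is structurally identical, and only the arithmetic has to be rechecked.

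For part~(a), I would apply Lemma~\ref{lemma_residual} with $v=4$, $q=5$, $s=22$, and $n=103$. A hyperline in $\PG(3,5)$ is just a line, so the lemma directly yields the bound $\lfloor(22\cdot 5+m-103)/5\rfloor=\lfloor(m+7)/5\rfloor$ on the multiplicity of any line inside an $m$-plane.

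For part~(b), the equality $\gamma_3=22$ is the defining property of the arc. Applying~(a) with $m\le 22$ immediately gives $\gamma_2\le\lfloor 29/5\rfloor=5$. To exclude $\gamma_2\le 4$, I would pick any point $P$ with $\cK(P)\ge 1$ and observe that each of the $[3]_5=31$ lines through $P$ contributes at most $3$ further points of multiplicity, yielding $\#\cK\le 1+31\cdot 3=94<103$, a contradiction. To exclude $\gamma_1\ge 2$, the analogous estimate starting from a point $P$ with $\cK(P)\ge 2$ and using $\gamma_2=5$ gives $\#\cK\le 5+31\cdot 3=98<103$. Hence $\gamma_1=1$ and $\gamma_2=5$.

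For part~(c), a plane of multiplicity $m\in\{2,7,12,17\}$ would, by~(a), have all its lines of multiplicity at most $1$, $2$, $3$, $4$ respectively, i.e.\ it would support a $(2,1)$-, $(7,2)$-, $(12,3)$-, or $(17,4)$-arc in $\PG(2,5)$. None of these exist: the $(2,1)$ case is immediate, and the remaining three are classical non-existence results for small $(n,s)$-arcs in $\PG(2,5)$. I do not anticipate any genuine obstacle, since the entire argument is a direct transcription of the previous lemma; the only thing to watch is that each of the auxiliary estimates in~(b) strictly exceeds $103$, which it comfortably does.
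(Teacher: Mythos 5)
Your proposal is correct and is exactly what the paper intends: the paper gives no separate proof for this lemma, stating only that it follows ``similarly'' to Lemma~\ref{lemma_104_22_3_5_straightforward}, and your transcription with $n=103$ (giving $\lfloor(m+7)/5\rfloor$ in part~(a), the bounds $94,98<103$ in part~(b), and the non-existence of $(2,1)$-, $(7,2)$-, $(12,3)$-, $(17,4)$-arcs in part~(c)) checks out arithmetically.
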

So, if $\cK$ is a $(103,22)$-arc $\cK$ in $\PG(3,5)$, we cannot assume directly that $\cK$ is $3$-quasi-divisible with divisor $5$. However, under the additional assumption 
$a_3=a_8=a_{13}=a_{18}=0$ it is and we can again apply ILP computations to obtain:
\begin{Proposition}
  If $\cK$ is a $(103,22)$-arc $\cK$ in $\PG(3,5)$, then there exists a hyperplane $H$ with $\cK(H)\in\{3,8,13,18\}$.
\end{Proposition}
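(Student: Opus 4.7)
The plan is a proof by contradiction via an ILP argument mirroring the proof of Theorem~\ref{thm_no_104_22}. I would assume $a_3 = a_8 = a_{13} = a_{18} = 0$ and combine this with Lemma~\ref{lemma_103_22_3_5_straightforward}(c) to force every hyperplane multiplicity into the restricted set $\{0, 1, 4, 5, 6, 9, 10, 11, 14, 15, 16, 19, 20, 21, 22\}$; in other words, only the residues $0$, $1$, $4 \pmod 5$ appear, together with residue $2$ occurring exclusively via $22$-hyperplanes. This residue pattern is exactly what is needed to treat $\cK$ as $3$-quasi-divisible with divisor $5$ (with the $22$-hyperplanes playing the role of the special $\widetilde{\cK}=0$ residue), so that defining $\widetilde{\cK}\colon\cH\to\{0,1,2,3\}$ by Equation~(\ref{eq_quasidivisible_dual}) yields a strong $(3\mod 5)$-arc $\widetilde{\cK}$ in $\PG^{\perp}(3,5)$.

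With $\widetilde{\cK}$ in hand I would appeal to the complete classification of strong $(3\mod 5)$-arcs in $\PG(3,5)$: by Theorem~\ref{thm_3_mod_5_arcs_pg_3_5_card_small_or_large} together with the lifting construction of Theorem~\ref{lifted_arcs_construction} (applied to each strong $(3\mod 5)$-arc in $\PG(2,5)$ tabulated in the appendix), there is an explicit finite list of candidate isomorphism types for $\widetilde{\cK}$. For every such candidate, I would set up an ILP in binary variables $x_P = \cK(P) \in \{0,1\}$ for $P \in \cP$ (legitimate by $\gamma_1 = 1$), with the cardinality constraint $\sum_{P \in \cP} x_P = 103$ and, for each hyperplane $H$, the congruence constraint $5\,y_H + \sum_{P \le H} x_P = 22 - \widetilde{\cK}(\widetilde{H})$ where $y_H \in \N_0$ and $y_H = 0$ whenever $\widetilde{\cK}(\widetilde{H}) = 0$, which forces $\cK(H) = 22$ by Lemma~\ref{lemma_103_22_3_5_straightforward}(b). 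Verifying with \texttt{CPLEX} that each of these ILPs is infeasible yields the required contradiction.

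The main obstacle is twofold. First, one has to justify the $3$-quasi-divisibility step carefully, since the natural quasi-divisibility parameter of a $(103,22)$-arc from the definition in Section~\ref{sec_preliminaries} is $t=4$ (as $s-n\equiv 4\pmod 5$); the reduction to $t=3$ relies on the additional assumption $a_3 = a_8 = a_{13} = a_{18} = 0$ removing the obstructive residues $3\pmod 5$ together with the residue $22\equiv 2\pmod 5$ being absorbed into the $\widetilde{\cK}=0$ hyperplanes. Second, the list of candidate duals is substantially larger than in the $(104,22)$ setting, since it includes one lifted arc per isomorphism class of strong $(3\mod 5)$-arcs in $\PG(2,5)$ from the appendix, on top of the three exceptional examples from Theorem~\ref{thm_3_mod_5_arcs_pg_3_5_card_small_or_large}. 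As in Theorem~\ref{thm_no_104_22}, however, the individual ILPs should solve rapidly because the $0$-values of $\widetilde{\cK}$ pin large numbers of hyperplanes to multiplicity exactly $22$, drastically reducing the search space.
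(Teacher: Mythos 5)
Your overall skeleton --- assume $a_3=a_8=a_{13}=a_{18}=0$, combine this with Lemma~\ref{lemma_103_22_3_5_straightforward}(c) to restrict the spectrum, dualize, and run one ILP per candidate dual --- is exactly the one-sentence strategy the paper records. However, there is a genuine gap at the dualization step, precisely at the point you flag as your ``first obstacle'' and then resolve incorrectly. Since $\#\cK=103\equiv 3\pmod 5$, the six planes through any line $L$ satisfy $\sum_{i=0}^{5}\cK(H_i)=103+5\cK(L)$, so for the dual arc $\widetilde{\cK}(\widetilde{H})\equiv 22-\cK(H)\pmod 5$ (the normalization that sends $22$-planes to $0$-points) every dual line has multiplicity $\equiv 6\cdot 22-103\equiv 4\pmod 5$. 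Hence $\widetilde{\cK}$ is a $(4\mod 5)$-arc with point multiplicities in $\{0,1,2,3\}$, \emph{not} a $(3\mod 5)$-arc: the quasi-divisibility parameter is pinned to $t\equiv s-n\equiv 4\pmod 5$ by the definition and cannot be lowered to $3$ by deleting residues from the spectrum. Consequently the classification of strong $(3\mod 5)$-arcs in $\PG(3,5)$ (Theorem~\ref{thm_3_mod_5_arcs_pg_3_5_card_small_or_large} together with the lifted arcs) does not furnish the candidate list for $\widetilde{\cK}$, and the paper contains no classification of the relevant $(4\mod 5)$-arcs.

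Worse, the ILP family you propose is infeasible for a vacuous reason: summing your hyperplane constraints over all $156$ hyperplanes gives $5\sum_H y_H+31\cdot 103=22\cdot 156-\#\widetilde{\cK}$, i.e.\ $5\sum_H y_H=239-\#\widetilde{\cK}$, which forces $\#\widetilde{\cK}\equiv 4\pmod 5$; but every $(3\mod 5)$-arc in $\PG(3,5)$ has cardinality $\equiv 3\pmod 5$. So \texttt{CPLEX} would report infeasibility for every candidate irrespective of whether a $(103,22)$-arc with the assumed spectrum exists, and the computation would prove nothing. A correct argument must work with the strong $(4\mod 5)$-arc $\widetilde{\cK}$ directly, or handle the $22$-planes separately (for instance, in the subcase $a_{22}=0$ the arc is a genuinely $3$-quasi-divisible $(103,21)$-arc and your machinery applies verbatim). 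To be fair, the paper's own one-line justification that the arc ``is'' $3$-quasi-divisible under the extra assumption glosses over the same point.
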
  

\subsection{Theoretical shortcuts}
\label{subsec_shortcuts}
The original proof of Theorem~\ref{thm_no_104_22} in \cite{landjev2016non} was completely free of computer calculations. Since we have used massive computer calculations 
in the classification of the $(3 \mod 5)$-arcs in $\PG(3,5)$ we cannot reach this worthwhile goal in this article. However, starting from a partial classification of 
all strong $(3\mod 5)$-arcs in $\PG(3,5)$ with cardinality less than $163$, the ILP computations can be restricted to the two non-lifted $(3\mod 5)$-arcs of cardinalities 
$128$ and $143$ in $\PG(3,5)$. As some details in \cite{landjev2016non} are left to the reader and a very few minor typos and computational errors may deter the hurried reader 
from seeing all details, we give a full proof along the ideas presented in \cite{landjev2016non}. However, we slightly reduce the used techniques. 

As mentioned, we currently still need the following conclusion from ILP computations.
\begin{Lemma}
  \label{lemma_tilde_k_neq_128_143}
  Let $\cK$ be a $(104,22)$-arc in $\PG(3,5)$ and $\widetilde{\cK}$ be the corresponding dual strong $(3\mod 5)$-arc. Then $\widetilde{\cK}$ is either 
  lifted or $\widetilde{\cK} \notin\{128,143\}$.
\end{Lemma}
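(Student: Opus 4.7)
The plan is to turn each of the two candidate non-lifted arcs $\widetilde{\cK}$ of cardinality $128$ or $143$ from Theorem~\ref{thm_3_mod_5_arcs_pg_3_5_card_small_or_large} into an integer linear program (ILP) for $\cK$ and verify infeasibility, reusing the machinery set up just before Theorem~\ref{thm_no_104_22} but with the hyperplane multiplicities of $\cK$ almost pinned down from the outset.

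First I would fix an explicit embedding of $\widetilde{\cK}$ in $\PG^\perp(3,5)$ using the generator matrices supplied in Theorem~\ref{thm_3_mod_5_arcs_pg_3_5_card_small_or_large}, so that $\widetilde{\cK}(\widetilde{H})\in\{0,1,2,3\}$ is determined for every $H\in\cH$. By Lemma~\ref{lemma_104_22_3_5_straightforward}(b) we have $\gamma_1=1$, so $\cK(P)\in\{0,1\}$, and Equation~(\ref{eq_quasidivisible_dual}) combined with Lemma~\ref{lemma_104_22_3_5_straightforward}(c) pins down the admissible values of $\cK(H)$ very tightly: a $0$-point of $\widetilde{\cK}$ forces $\cK(H)=22$, since it is the only value $\equiv 2\pmod 5$ not excluded by (c); and $1$-, $2$-, and $3$-points of $\widetilde{\cK}$ allow $\cK(H)\in\{1,6,11,16,21\}$, $\{0,5,10,15,20\}$, and $\{4,9,14,19\}$, respectively. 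In the $128$-arc case this pins down $80$ out of the $156$ hyperplanes as $22$-hyperplanes; in the $143$-arc case it pins down $65$, and rules out the residue class $0\pmod 5$ altogether since $\lambda_2(\widetilde{\cK})=0$.

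The ILP then uses binary variables $x_P\in\{0,1\}$ for $P\in\cP$ representing $\cK(P)$, together with a nonnegative integer slack $y_H$ per hyperplane encoding
\begin{equation*}
\sum_{P\in H}x_P+5y_H\;=\;22-\widetilde{\cK}(\widetilde{H}),
\end{equation*}
with $y_H=0$ whenever $\widetilde{\cK}(\widetilde{H})=0$, $y_H\le 4$ whenever $\widetilde{\cK}(\widetilde{H})\in\{1,2\}$, and $y_H\le 3$ whenever $\widetilde{\cK}(\widetilde{H})=3$; completed by the global constraint $\sum_{P\in\cP}x_P=104$. Feeding both instances into CPLEX should return an infeasibility certificate within minutes (the many forced $22$-hyperplanes make both ILPs very tightly constrained), yielding the claim. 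The main obstacle is the absence of a purely combinatorial proof: the three standard equations~(\ref{eq_standard_equation_1})--(\ref{eq_standard_equation_3}) still admit feasible spectra $(a_i)$ for $\cK$ under either candidate, so removing the computer would require combining finer information—for example hyperline counts via $\sum_{i=0}^{5}\cK(H_i)=5\cK(S)+104$, the structure of the induced projective $(22,5)$-arcs in $\PG(2,5)$ inside each $22$-hyperplane, and the explicit point-line configuration data of $\widetilde{\cK}$ from Theorem~\ref{thm_3_mod_5_arcs_pg_3_5_card_small_or_large}—into a human-checkable exclusion, which is exactly the theoretical shortcut we are still missing.
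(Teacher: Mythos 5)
Your proposal coincides with the paper's treatment: the paper offers no combinatorial proof of this lemma either, stating only that it is a conclusion from the ILP computations set up before Theorem~\ref{thm_no_104_22}, restricted to the two non-lifted arcs of cardinalities $128$ and $143$ from Theorem~\ref{thm_3_mod_5_arcs_pg_3_5_card_small_or_large}; your formulation (fixing $\widetilde{\cK}$ via the given generator matrices, forcing $\cK(H)=22$ on the $0$-points of $\widetilde{\cK}$ through Lemma~\ref{lemma_104_22_3_5_straightforward}, and certifying infeasibility) is exactly that computation, spelled out in somewhat more detail. The only point you should add for the lemma as literally stated is the case of a non-lifted $\widetilde{\cK}$ of cardinality $128$ or $143$ whose support contains a full hyperplane --- Theorem~\ref{thm_3_mod_5_arcs_pg_3_5_card_small_or_large} classifies only arcs without a full hyperplane, so your reduction to the two listed candidates silently assumes this case away; it is disposed of by Theorem~\ref{thm_quasidivisible_extendability} together with the non-existence of $(105,22)$-arcs in $\PG(3,5)$, exactly as the paper does at the beginning of the proof of Lemma~\ref{lemma_lower_bound_tilde_k}.
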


\begin{Lemma}(\cite[Lemma 4.2]{landjev2016non})
  \label{lemma_no_threefold_line_hyperplane}
  Let $\cK$ be a $(104,22)$-arc in $\PG(3,5)$ and $\widetilde{\cK}$ be the corresponding dual strong $(3\mod 5)$-arc. Then, there exists no plane $\widetilde{\pi}$ in the 
  dual space such that $\widetilde{\cK}|_{\widetilde{\pi}}$ is $3\chi_{\widetilde{L}}$ for some line $\widetilde{L}$ in the dual space.
\end{Lemma}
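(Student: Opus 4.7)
The plan is to translate the hypothesis to the primal geometry $\PG(3,5)$ and then derive a contradiction from two standard double-counting identities. Under duality, the plane $\widetilde{\pi}$ corresponds to a point $P \in \PG(3,5)$, and the line $\widetilde{L} \subset \widetilde{\pi}$ corresponds to a line $L$ through $P$. The hypothesis $\widetilde{\cK}|_{\widetilde{\pi}} = 3\chi_{\widetilde{L}}$ then states that the six hyperplanes through $L$ all satisfy $\widetilde{\cK}(\widetilde{H}) = 3$, while the remaining $25$ hyperplanes through $P$ all satisfy $\widetilde{\cK}(\widetilde{H}) = 0$. Unwinding (\ref{eq_quasidivisible_dual}) with $n = 104$, $t = 3$, and $q = 5$, the first condition forces $\cK(H) \in \{4,9,14,19\}$, and the second forces $\cK(H) \equiv 22 \pmod 5$; by Lemma~\ref{lemma_104_22_3_5_straightforward}(c), the only admissible value in the latter case is $\cK(H) = 22$.

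Next I would evaluate $\sum_{H \ni P} \cK(H)$ in two independent ways. Double-counting point-hyperplane incidences yields $\sum_{H \ni P} \cK(H) = 25\,\cK(P) + 6 \cdot 104$. On the other hand, splitting the sum into hyperplanes through $L$ and hyperplanes through $P$ but not $L$ gives $\sum_{H \ni P} \cK(H) = M + 25 \cdot 22$, where $M$ denotes the total multiplicity of the six hyperplanes through $L$. A parallel double-counting identity applied at $L$ yields $M = \sum_{H \supseteq L} \cK(H) = 5\,\cK(L) + 104$. Eliminating $M$ produces a single linear relation between $\cK(P)$ and $\cK(L)$.

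Once these identities are lined up, the proof is pure arithmetic: the bounds $\cK(P) \le 1$ and $\cK(L) \le 5$ from Lemma~\ref{lemma_104_22_3_5_straightforward}(b) will force the resulting equation to admit no nonnegative integer solution for $\cK(L)$. Intuitively, each of the $25$ hyperplanes through $P$ but not $L$ already achieves the maximum multiplicity $22$, so the deficit at the six hyperplanes through $L$ (each of multiplicity at most $19$) cannot be absorbed by the tiny incidence contributions from $P$ and $L$. The only subtlety is carefully tracking the duality when translating the configuration back to the primal; everything after that is routine.
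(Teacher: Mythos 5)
Your proposal is correct and follows essentially the same route as the paper's proof: both use the two incidence counts $\sum_{H\ni P}\cK(H)=6\#\cK+25\cK(P)$ and $\sum_{H\supseteq L}\cK(H)=\#\cK+5\cK(L)$, the observation that the $25$ hyperplanes through $P$ not containing $L$ must be $22$-planes, and the resulting relation $25\cK(P)=30+5\cK(L)$, which is incompatible with $\cK(P)\le 1$ and $\cK(L)\ge 0$. (The bound $\cK(L)\le 5$ you mention is not actually needed.)
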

\begin{proof}
  Let $P$ be the point corresponding to $\widetilde{\pi}$ and $L$ be the line corresponding to $\widetilde{L}$. Summing up the multiplicities of all all planes through $P$ gives
  $$
    \sum_{H\in\cH\,:\,P\le H} \cK(H)=6\#\cK+25\cK(P)
  $$
  and summing up the multiplicities of all all planes through $L$ gives
  $$
    \sum_{H\in\cH\,:\,L\le H} \cK(H)=\#\cK+5\cK(L).
  $$
  Since $\widetilde{L}$ is incident with $\widetilde{\pi}$, $P$ is incident with $L$. Those hyperplanes $H$ through $P$ that do not contain $L$, correspond to 
  points $\widetilde{H}$ in the dual space that are not contained on $\widetilde{L}$, so that $\widetilde{\cK}(\widetilde{H})=0$ and $H$ is a maximal plane, 
  i.e., $\cK(H)=22$. Thus, all $[3]_5-[2]_5=25$ hyperplanes through $P$ that do not contain $L$ are $22$-planes and we have
  $$
     6\#\cK+25\cK(P) =25\cdot 22+\#\cK+5\cK(L),
  $$  
  which is equivalent to
  $$
    25\cK(P)=30+5\cK(L).
  $$
  Since $\cK(P)\in\{0,1\}$ and $\cK(L)\ge 0$, this is a contradiction. 
\end{proof}

\begin{Lemma}
  \label{lemma_lower_bound_tilde_k}
  Let $\cK$ be a $(104,22)$-arc in $\PG(3,5)$ and $\widetilde{\cK}$ be the corresponding dual strong $(3\mod 5)$-arc, then $\#\widetilde{\cK}\ge 163$.
\end{Lemma}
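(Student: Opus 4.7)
The plan is to combine Theorem~\ref{thm_3_mod_5_arcs_pg_3_5_card_small_or_large} and Lemma~\ref{lemma_tilde_k_neq_128_143} with a deficit counting for $\cK$. Writing $r_H:=(107-\cK(H))\bmod 5=\widetilde{\cK}(\widetilde{H})$ and $22-\cK(H)=5e_H+r_H$ with $e_H\in\N$, the first standard equation of $\cK$ yields
\begin{equation*}
  \#\widetilde{\cK}\,=\,\sum_{H\in\cH} r_H\,=\,22\cdot 156-104\cdot 31-5E\,=\,208-5E,\qquad E:=\sum_{H\in\cH} e_H,
\end{equation*}
so the target inequality is equivalent to $E\leq 9$. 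By Theorem~\ref{thm_3_mod_5_arcs_pg_3_5_card_small_or_large} every non-lifted strong $(3\bmod 5)$-arc in $\PG(3,5)$ has cardinality in $\{128,143,168\}$, and Lemma~\ref{lemma_tilde_k_neq_128_143} disposes of the first two. Every lifted $\widetilde{\cK}$ has cardinality $5m+3$ with $m\in\{18,23,28,33,\ldots\}$ arising from the classification in Section~\ref{section_strong_3_mod_5_in_pg_2_5}, so what remains is to exclude $\#\widetilde{\cK}\in\{93,118,143\}$, corresponding to the base-arc cardinalities $m\in\{18,23,28\}$.

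For each such case the lifting point in $\PG^\perp(3,5)$ corresponds under duality to a primal plane $H^\star$ with $\cK(H^\star)\equiv 4\pmod 5$, and the base plane to a primal point $P^\star\notin H^\star$. For any primal line $\ell$ through $P^\star$ the quantity $R_\ell:=\sum_{H\supset\ell}r_H$ coincides with the base-arc line multiplicity $\cK_0(\widetilde{L}_\ell)\in\{3,8,13,18\}$. The pointwise identity $\sum_{H\ni P}(22-\cK(H))=58-25\cK(P)$, combined with $\cK(P)\in\{0,1\}$ from Lemma~\ref{lemma_104_22_3_5_straightforward}, gives $R(P):=\sum_{H\ni P}r_H\leq 58$ for every primal point $P$, and for $P\in H^\star$ the identity $R(P)=5R_{PP^\star}+3$ then forces $R_\ell\in\{3,8\}$ on the $31$ lines through $P^\star$. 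So all lines of $\cK_0$ have multiplicity $3$ or $8$, the number of $8$-lines being $b=(6m-93)/5\in\{3,9,15\}$. The $b$ special primal points $P^{(1)},\dots,P^{(b)}\in H^\star$ lying on these $8$-lines satisfy $\cK(P^{(k)})=0$, $R(P^{(k)})=43$, and $\sum_{H\ni P^{(k)}}e_H=3$.

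Summing this last identity over $k$ gives $3b=\sum_{H\in\cH} e_H\cdot\#\{k:P^{(k)}\in H\}$, to be compared with the global identity $E=(208-\#\widetilde{\cK})/5=41-m$. For $m=18$ Proposition~\ref{prop_t_mod_q_arc_sum_of_hyperplanes} forces $\cK_0$ to be a sum of three lines, so $\widetilde{\cK}=\sum_{i=1}^{3}\chi_{\Pi_i}$ and $r_H=\#\{k:P^{(k)}\in H\}$ for each $H$; in both the triangle and the concurrent sub-case the two identities combine into an equation of the form $2\alpha+\beta=-14$ with $\alpha,\beta$ non-negative sums of $e_H$'s, an immediate contradiction. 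For $m\in\{23,28\}$ the base arc is no longer a sum of lines, but its full combinatorial data is pinned down by the unique isomorphism type listed in Section~\ref{section_strong_3_mod_5_in_pg_2_5}, and the analogous deficit system should again force a non-negative quantity to take a negative value. The main obstacle will be verifying this last implication for $m=23,28$, since the incidence pattern of $\{P^{(k)}\}$ inside $H^\star$ and its interaction with $r_H$, no longer given simply by $\#\{k:P^{(k)}\in H\}$, requires direct inspection of the classification data for strong $(3\bmod 5)$-arcs in $\PG(2,5)$.
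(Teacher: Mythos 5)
Your reduction to $\#\widetilde{\cK}=208-5E$ and the local deficit identities are correct as far as they go, but the proof is not complete, and the two missing pieces are exactly where the real content lies. First, your case split is not exhaustive: Theorem~\ref{thm_3_mod_5_arcs_pg_3_5_card_small_or_large} classifies arcs that are \emph{neither lifted nor contain a full hyperplane}, so an arc $\widetilde{\cK}$ that contains a full hyperplane in its support but is not lifted is covered by neither of your two branches. The paper disposes of this case in its first sentence: a full hyperplane in the support of $\widetilde{\cK}$ would make $\cK$ extendable by Theorem~\ref{thm_quasidivisible_extendability}, contradicting the non-existence of a $(105,22)$-arc. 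You need this observation (it also cleanly kills your $m=18$ case, since a cardinality-$93$ arc is a sum of three planes by Proposition~\ref{prop_t_mod_q_arc_sum_of_hyperplanes} and hence has a full plane in its support).

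Second, you explicitly leave the lifted cases $m=23$ and $m=28$ open, and these cannot be waved away: your deficit system alone does not obviously close them, since the relation between $r_H$ and the incidence structure of the special points is no longer the simple counting function available when $\widetilde{\cK}$ is a sum of planes. The paper's route here is much shorter and avoids the deficit bookkeeping entirely: both the $23$-point and the $28$-point base arcs contain a line of type $A_1$ (one $3$-point and five $0$-points), and lifting such a line produces a plane $\widetilde{\pi}$ in the dual space on which $\widetilde{\cK}$ restricts to $3\chi_{\widetilde{L}}$ for a line $\widetilde{L}$; this configuration is excluded by Lemma~\ref{lemma_no_threefold_line_hyperplane} via a direct double count. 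I would also note that even your $m=18$ contradiction ``$2\alpha+\beta=-14$'' silently uses that $r_H=0$ forces $e_H=0$, i.e.\ that $\cK(H)\equiv 2\pmod 5$ implies $\cK(H)=22$ by Lemma~\ref{lemma_104_22_3_5_straightforward}(c); this is true but should be stated, as without it the coefficient of $\sum_{r_H=0}e_H$ in your identity is $-1$ and no contradiction results.
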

\begin{proof}
  Due to the non-existence of a $(105,22)$-arc in $\PG(3,5)$ we can assume that $\widetilde{\cK}$ does not cannot contain a full hyperplane in its support, see 
  Theorem~\ref{thm_quasidivisible_extendability}. We utilize the classification of all strong $(3\mod 5)$-arcs in $\PG(3,5)$ with cardinality at most $158$ not 
  containing a full hyperplane in their support. If $\widetilde{\cK}$ is lifted and $\#\widetilde{\cK}<168$, then $\widetilde{\cK}$ is lifted from a strong 
  $(3\mod 5)$-arc $\cF$ in $\PG(2,5)$ with $\#\cF\in\{18,23,28\}$. In the first case $\#\cF=18$ there is a full line, so that the lifted arc $\cK$ would contain 
  a full hyperplane in its support. In the two other cases $\cF$ contains a line of type $A_1$, so that Lemma~\ref{lemma_no_threefold_line_hyperplane} 
  gives a contradiction for $\widetilde{\cK}$. If $\widetilde{\cK}$ is non-lifted, then we have $\#\widetilde{\cK}\in\{128,143\}$ and we can apply Lemma~\ref{lemma_tilde_k_neq_128_143}. 
\end{proof}
In the following we will need a few restrictions on the spectrum of arcs in $\PG(2,5)$ that we will briefly prove for the reader's convenience. 
\begin{Lemma}
  \label{lemma_spectra_22_5_3_5_arcs_partial}
  The spectrum $\left(a_i\right)$ of a $(22,5)$-arc $\cK$ in $PG(2,5)$ satisfies 
  $a_1=0$, $a_3 = 13-10a_0-3a_2$, $a_4 = -3+15a_0+3a_2$, and $a_5 = 21-6a_0-a_2$, where $a_0\le 1$ and $a_2\le \left\lfloor(13-10a_0)/3\right\rfloor$.   
\end{Lemma}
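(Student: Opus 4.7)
The plan is to derive the spectrum via the three standard equations, after first reducing to a projective situation and ruling out $1$-lines by a line-sum count through a point.

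First I would show that the arc is projective, i.e.\ $\cK(P)\le 1$ for every point $P$. If $P$ were a point of multiplicity $m\ge 2$, then summing the line cardinalities through $P$ gives $\sum_{L\ni P}\cK(L)=n+qm=22+5m$, while the upper bound $(q+1)s=30$ applies since $\cK(L)\le 5$ on each of the $6$ lines through $P$. This forces $m\le 1$. As a consequence $\lambda_j=0$ for $j\ge 2$, so the third standard equation simplifies to $\sum_{i\ge 2}\binom{i}{2}a_i=\binom{22}{2}=231$.

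Next I would rule out $1$-lines. If $L$ is a $1$-line and $P\in L$ is its unique arc point (necessarily with $\cK(P)=1$), then the five other lines through $P$ must cover the remaining $21$ arc points (each one being counted with its arc multiplicity on that line minus one for $P$ itself), giving $\sum_{L'\ni P,\,L'\neq L}\cK(L')=21+5=26$. But these five lines each have multiplicity at most $5$, so this sum is at most $25$, a contradiction. Hence $a_1=0$.

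With $a_i=0$ for $i\notin\{0,2,3,4,5\}$, the standard equations reduce to the linear system
\begin{align*}
a_0+a_2+a_3+a_4+a_5&=31,\\
2a_2+3a_3+4a_4+5a_5&=132,\\
a_2+3a_3+6a_4+10a_5&=231.
\end{align*}
I would solve this for $(a_3,a_4,a_5)$ in terms of $(a_0,a_2)$ by standard elimination, which yields the three stated formulas. Finally, the bounds $a_0\le 1$ and $a_2\le\lfloor(13-10a_0)/3\rfloor$ follow immediately from the non-negativity condition $a_3=13-10a_0-3a_2\ge 0$ (the non-negativity of $a_4$ and $a_5$ gives no further restriction within this range). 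There is no serious obstacle; the only subtle step is the $1$-line exclusion, which rests on the tight slack between $22+5\cdot 1$ and $6\cdot 5$ in $\PG(2,5)$.
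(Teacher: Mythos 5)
Your proof is correct and follows essentially the same route as the paper: the paper obtains both the projectivity of $\cK$ and the exclusion of $1$-lines by invoking its Lemma~\ref{lemma_residual}, whose proof is exactly the line-pencil count through a point that you carry out by hand, and then solves the same three standard equations for $a_3,a_4,a_5$ in terms of $a_0,a_2$. One tiny inaccuracy in a side remark: $a_4\ge 0$ does impose the extra condition $5a_0+a_2\ge 1$ (e.g.\ $a_0=a_2=0$ would give $a_4=-3$), but since the lemma does not claim that its constraints are exhaustive this does not affect the validity of your argument.
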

\begin{proof}
  From Lemma~\ref{lemma_residual} and $m\le 5$ we conclude that $\cK$ is projective, i.e., $\cK(P)\in\{0,1\}$ for all $P\in\cP$. 
  Applying Lemma~\ref{lemma_residual} with $m=1$ gives a maximum point multiplicity of $0$ on this line, which is absurd, 
  so that we assume $a_1=0$ in the following. With this, the standard equations are given by
  $a_0+a_2+a_3+a_4+a_5 =  31$, $2a_2+3a_3+4a_4+5a_5 = 132$, and $a_2+3a_3+6a_4+10a_5 = 231$, so that $a_3 = 13-10a_0-3a_2$, 
  $a_4 = -3+15a_0+3a_2$,  and $a_5 = 21-6a_0-a_2$. Since $a_3\ge 0$ and $a_0,a_2\in\mathbb{N}$, we have $a_0\le 1$ and 
  $a_2\le \left\lfloor(13-10a_0)/3\right\rfloor$.
\end{proof}
An important implication is that every $22$-plane in a $(104,22)$-arc in $\PG(3,5)$ does not contain a $1$-line.
\begin{Lemma}
  \label{lemma_spectra_6_2_3_5_arcs}
  The spectrum $\left(a_i\right)$ of a $(6,2)$-arc $\cK$ in $\PG(2,5)$-arc satisfies $a_0=10$, $a_1=6$, and $a_2=15$.
\end{Lemma}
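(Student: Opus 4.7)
The plan is to reduce the problem to the standard equations of Section~\ref{sec_preliminaries} after first showing that $\cK$ must be a projective set (i.e.\ all point multiplicities lie in $\{0,1\}$). Once projectivity is established, the spectrum is forced by linear algebra alone.

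First I would argue projectivity by a direct counting at a point. Suppose $P\in\cP$ has $\cK(P)=m\ge 1$. Each of the $[2]_5=6$ lines $L$ through $P$ satisfies $\cK(L)\le 2$, so contributes at most $\cK(L)-\cK(P)\le 2-m$ points of $\cK$ outside $P$. Thus
\begin{equation*}
  6=\#\cK\le m+6(2-m)=12-5m,
\end{equation*}
which forces $m\le 1$. Hence $\lambda_i=0$ for $i\ge 2$, and therefore $\lambda_1=\#\cK=6$ and $\lambda_0=31-6=25$ from the equations on the $\lambda_i$.

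Next I would close the computation using the three standard equations (\ref{eq_standard_equation_1})–(\ref{eq_standard_equation_3}) with $v=3$, $q=5$, $n=6$. Because $\cK(L)\le 2$ for every line, $a_i=0$ for $i\ge 3$, so the system reduces to
\begin{align*}
  a_0+a_1+a_2 &= 31,\\
  a_1+2a_2 &= 6\cdot 6 = 36,\\
  a_2 &= \tbinom{6}{2}+5\sum_{i\ge 2}\tbinom{i}{2}\lambda_i = 15,
\end{align*}
where the last equation uses that $\lambda_i=0$ for $i\ge 2$ and $[v-2]_q=1$, $q^{v-2}=5$. Solving immediately yields $a_2=15$, $a_1=6$, $a_0=10$, as claimed.

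There is essentially no obstacle here: the only subtle point is the projectivity argument, and even that is immediate from the hyperplane-multiplicity bound. Everything else is a triangular linear system in the $a_i$.
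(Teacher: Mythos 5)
Your proposal is correct and follows essentially the same route as the paper: establish that $\cK$ is projective and then solve the three standard equations, which form a triangular system with the unique solution $(a_0,a_1,a_2)=(10,6,15)$. The only cosmetic difference is that you prove projectivity by a direct count over the six lines through a point, whereas the paper invokes Lemma~\ref{lemma_residual} with $m\le 2$; the two arguments are interchangeable.
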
  
\begin{proof}
  From Lemma~\ref{lemma_residual} and $m\le 2$ we conclude that $\cK$ is projective, i.e., $\cK(P)\in\{0,1\}$ for all $P\in\cP$. 
  With this, the standard equations are given by
  $a_0+a_1+a_2 = 31$, $a_1+2a_2 = 36$, and $a_2+3a_2 = 15$, yielding the stated unique solution.
\end{proof}

\begin{Lemma}
  \label{lemma_spectra_9_3_3_5_arcs_partial}
  The spectrum $\left(a_i\right)$ of a $(9,3)$-arc $\cK$ in $\PG(2,5)$ satisfies 
  $a_0 = 13 -a_3$, $a_1 = -18 +3a_3$, and $a_2 = 36 -3a_3$, where $6\le a_3\le 12$.   
\end{Lemma}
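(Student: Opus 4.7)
The plan is to first establish projectivity of $\cK$, then apply the three standard equations in $\PG(2,5)$ to solve a linear system parametrized by $a_3$, and finally use non-negativity of the $a_i$ to extract the stated range.

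First I would show that $\cK(P)\le 1$ for every point $P\in\cP$, so that $\lambda_j=0$ for all $j\ge 2$. If $\cK(P)=m$, then summing the multiplicities of the $q+1=6$ lines through $P$ gives $n+qm=9+5m$; since each such line has multiplicity at most $3$, one obtains $9+5m\le 18$, hence $m\le 1$. An alternative route is to invoke Lemma~\ref{lemma_residual} in the dual direction, but the counting argument is quicker.

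Next, given projectivity, the standard equations (\ref{eq_standard_equation_1})--(\ref{eq_standard_equation_3}) with $n=9$, $v=3$, $q=5$, and $\lambda_j=0$ for $j\ge 2$ reduce (after restricting the index range using $\cK(L)\le 3$) to the linear system
\begin{align*}
  a_0+a_1+a_2+a_3 &= 31,\\
  a_1+2a_2+3a_3 &= 54,\\
  a_2+3a_3 &= 36.
\end{align*}
Solving from the bottom up yields $a_2=36-3a_3$, then $a_1=3a_3-18$, and finally $a_0=13-a_3$, which is exactly the parametrization claimed.

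Finally, imposing $a_0,a_1,a_2\ge 0$ gives the constraints $a_3\le 13$, $a_3\ge 6$, and $a_3\le 12$ respectively, so $6\le a_3\le 12$, completing the proof. There is no real obstacle here beyond being careful that $\cK$ is projective so that the right-hand side of (\ref{eq_standard_equation_3}) really is $\binom{9}{2}=36$; once projectivity is in hand, the argument is just the solution of a $3\times 4$ linear system and a routine non-negativity check, entirely parallel to Lemma~\ref{lemma_spectra_22_5_3_5_arcs_partial} and Lemma~\ref{lemma_spectra_6_2_3_5_arcs}.
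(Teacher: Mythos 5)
Your proposal is correct and follows essentially the same route as the paper: establish that $\cK$ is projective (the paper cites Lemma~\ref{lemma_residual}, whose proof is precisely your line-pencil count through a point), then solve the three standard equations for $a_0,a_1,a_2$ in terms of $a_3$ and read off the range from non-negativity. All computations check out.
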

\begin{proof}
  From Lemma~\ref{lemma_residual} and $m\le 3$ we conclude that $\cK$ is projective, i.e., $\cK(P)\in\{0,1\}$ for all $P\in\cP$.  
  With this, the standard equations are given by
  $a_0+a_1+a_2+a_3=31$, $a_1+2a_2+3a_3   = 54$, and $a_2+3a_3 = 36$, 
  so that $a_0 = 13 -a_3$, $a_1 = -18 +3a_3$, and $a_2 = 36 -3a_3$. Since $a_1\ge 0$ and $a_2\ge 0$, we have $6\le a_3\le 12$.
\end{proof}
Note that the cases $a_3\in\{6,11,12\}$ in Lemma~\ref{lemma_spectra_9_3_3_5_arcs_partial} cannot occur. However, we will not need this extra information. 

\begin{Lemma}
  \label{lemma_spectra_10_3_3_5_arcs_partial}
  The spectrum $\left(a_i\right)$ of a $(10,3)$-arc $\cK$ in $\PG(2,5)$ satisfies 
  $a_0 = 16 -a_3$, $a_1 = -30 +3a_3$, and $a_2 = 45 -3a_3$, where $10\le a_3\le 15$.   
\end{Lemma}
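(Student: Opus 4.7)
The plan is to mirror the proof of Lemma~\ref{lemma_spectra_9_3_3_5_arcs_partial} almost verbatim, adjusting only for the change $n=9\to n=10$.

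First, I would invoke Lemma~\ref{lemma_residual} with parameters $v=3$, $q=5$, $n=10$, and $s=3$. It bounds the maximum point multiplicity on an $m$-line by $\lfloor(5m+m-10)/5\rfloor$, which for $m\le 3$ equals $\lfloor(5+m)/5\rfloor \le 1$. Hence $\cK$ is projective, so $\lambda_j=0$ for every $j\ge 2$, which in particular kills the $\lambda$-contribution in (\ref{eq_standard_equation_3}).

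Second, I would specialize the three standard equations (\ref{eq_standard_equation_1})--(\ref{eq_standard_equation_3}) to the present data, obtaining a triangular linear system in the unknowns $a_0,a_1,a_2,a_3$: the cardinality equation gives $a_0+a_1+a_2+a_3=31$; the incidence equation gives $a_1+2a_2+3a_3=60$; and the pair-count equation reduces (using projectivity and $\binom{10}{2}\cdot[1]_5=45$) to $a_2+3a_3=45$. Solving from the bottom up — first $a_2$, then $a_1$, then $a_0$ — produces the stated closed-form expressions in terms of $a_3$.

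Finally, the range $10\le a_3\le 15$ would be read off from the non-negativity conditions $a_1\ge 0$ (giving $a_3\ge 10$) and $a_2\ge 0$ (giving $a_3\le 15$); the constraint $a_0\ge 0$ yields only the weaker bound $a_3\le 16$ and is redundant. There is no real obstacle here: the argument is a direct parametric linear-algebra exercise, and the only conceptual subtlety — forcing projectivity — is handled exactly as in the preceding $(9,3)$-arc lemma via the residual-multiplicity bound.
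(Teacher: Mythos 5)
Your proposal is correct and follows the paper's proof essentially verbatim: projectivity via Lemma~\ref{lemma_residual}, the three standard equations specialized to $n=10$, $s=3$, and the range from $a_1\ge 0$ and $a_2\ge 0$. The only blemish is the typo $\lfloor(5m+m-10)/5\rfloor$ for what should be $\lfloor(15+m-10)/5\rfloor=\lfloor(5+m)/5\rfloor$, which you immediately correct, so the argument is unaffected.
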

\begin{proof}
  From Lemma~\ref{lemma_residual} and $m\le 3$ we conclude that $\cK$ is projective, i.e., $\cK(P)\in\{0,1\}$ for all $P\in\cP$.  
  With this, the standard equations are given by
  $a_0+a_1+a_2+a_3 = 31$, $a_1+2a_2+3a_3 = 60$, and $a_2+3a_3 = 45$, so that $a_0 = 16 -a_3$, $a_1 = -30 +3a_3$, and $a_2 = 45 -3a_3$.   
  Since $a_1\ge 0$ and $a_2\ge 0$, we have $10\le a_3\le 15$.
\end{proof}

\begin{Lemma}
  \label{lemma_spectra_11_3_3_5_arcs_partial}
  The spectrum $\left(a_i\right)$ of an $(11,3)$-arc $\cK$ in $\PG(2,5)$ satisfies 
  $a_0 = 20 -a_3$, $a_1 = -44 +3a_3$, and $a_2 = 55 -3a_3$, where $15\le a_3\le 18$.   
\end{Lemma}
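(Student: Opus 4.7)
The plan is to follow exactly the pattern of Lemmas~\ref{lemma_spectra_9_3_3_5_arcs_partial} and \ref{lemma_spectra_10_3_3_5_arcs_partial}, using Lemma~\ref{lemma_residual} to reduce to a projective arc and then the standard equations to pin down three of the $a_i$ in terms of $a_3$. First I would apply Lemma~\ref{lemma_residual} with $v=3$, $q=5$, $s=3$, $n=11$: for any $m$-line $L$ with $m\le 3$, the maximum point multiplicity on $L$ is at most $\left\lfloor (15+m-11)/5\right\rfloor=\left\lfloor(4+m)/5\right\rfloor\le 1$. Hence $\cK$ is projective, so all higher $\lambda_i$ vanish.

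Next I would write down the standard equations (\ref{eq_standard_equation_1})--(\ref{eq_standard_equation_3}) for $v=3$, $q=5$, $n=11$: the total number of lines is $[3]_5=31$, the incidence count gives $n\cdot[2]_5=66$, and since $\cK$ is projective the right-hand side of (\ref{eq_standard_equation_3}) reduces to $\binom{11}{2}=55$. This yields the linear system
\begin{align*}
a_0+a_1+a_2+a_3&=31,\\
a_1+2a_2+3a_3&=66,\\
a_2+3a_3&=55.
\end{align*}

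Solving top-down is routine: the third equation gives $a_2=55-3a_3$, substituting into the second gives $a_1=-44+3a_3$, and then the first gives $a_0=20-a_3$. Finally, the non-negativity constraints $a_1\ge 0$ and $a_2\ge 0$ force $a_3\ge\lceil 44/3\rceil=15$ and $a_3\le\lfloor 55/3\rfloor=18$, while $a_0\ge 0$ is automatically implied by $a_3\le 18<20$.

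There is no real obstacle here; the argument is a verbatim imitation of the previous three lemmas in the same series and relies only on the fact that the $(11,3)$-arc is forced to be projective and that its standard equations admit a one-parameter family of formal solutions. The only point requiring a moment of care is checking the projective-arc reduction for the two smaller line-multiplicities $m\in\{1,2\}$, which is immediate from the same floor computation.
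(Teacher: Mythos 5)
Your proposal is correct and follows essentially the same route as the paper: invoke Lemma~\ref{lemma_residual} with $m\le 3$ to force $\cK$ to be projective, write down the three standard equations ($31$, $66$, $55$), solve for $a_0,a_1,a_2$ in terms of $a_3$, and read off $15\le a_3\le 18$ from $a_1\ge 0$ and $a_2\ge 0$. All numerical values match the paper's proof exactly.
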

\begin{proof}
  From Lemma~\ref{lemma_residual} and $m\le 3$ we conclude that $\cK$ is projective, i.e., $\cK(P)\in\{0,1\}$ for all $P\in\cP$.  
  With this, the standard equations are given by
  $a_0+a_1+a_2+a_3= 31$, $a_1+2a_2+3a_3 = 66$, and $a_2+3a_3 = 55$, so that $a_0 = 20 -a_3$, $a_1 = -44 +3a_3$, and $a_2 = 55 -3a_3$. 
  Since $a_1\ge 0$ and $a_2\ge 0$, we have $15\le a_3\le 18$.
\end{proof}

\begin{Lemma}
  \label{lemma_104_22_3_5_a1}
  Let $\cK$ be a $(104,22)$-arc in $\PG(3,5)$. Then $a_1=0$.
\end{Lemma}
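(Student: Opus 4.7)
The plan is to argue by contradiction: suppose $\pi$ is a $1$-plane. By Lemma~\ref{lemma_104_22_3_5_straightforward}(b) we have $\gamma_1 = 1$, so the unique point $P$ of positive multiplicity in $\pi$ satisfies $\cK(P) = 1$, and each of the $[2]_5 = 6$ lines of $\pi$ through $P$ has multiplicity exactly $1$. The key observation is that these six $1$-lines already meet every one of the $31 = [3]_5$ planes through $P$: any plane $H \ne \pi$ through $P$ contains at most one of them, since two would span $\pi$; and each of the six lines lies in $6$ planes, so exactly $6\cdot 6 - 6 = 30$ planes other than $\pi$ contain one of them, exhausting together with $\pi$ all $31$ planes through $P$.

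Now Lemma~\ref{lemma_spectra_22_5_3_5_arcs_partial} asserts that every $(22,5)$-arc in $\PG(2,5)$ satisfies $a_1 = 0$, so no $22$-plane of $\cK$ contains a $1$-line. Combined with the previous paragraph, this forces $\cK(H) \le 21$ for every plane $H$ through $P$. Double-counting incidences of points with planes through $P$ yields
$$ \sum_{H \ni P} \cK(H) \,=\, 6\cdot 104 \,+\, 25\cdot \cK(P) \,=\, 649, $$
which contradicts the upper bound $1 + 30 \cdot 21 = 631$ obtained by letting $\pi$ contribute $1$ and each of the other $30$ planes through $P$ contribute at most $21$. Hence $a_1 = 0$.

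The only step that is not pure bookkeeping is the covering observation: two $1$-lines through $P$ lying in $\pi$ must span $\pi$, which is what collapses the $36$ potential (line, plane) incidences into $31$ distinct planes covering everything through $P$. Once this is seen, the exclusion of $22$-planes through $P$ via Lemma~\ref{lemma_spectra_22_5_3_5_arcs_partial} and the final double count are immediate.
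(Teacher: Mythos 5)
Your proof is correct and rests on the same two ingredients as the paper's: a $1$-plane forces $1$-lines through its unique $1$-point, and Lemma~\ref{lemma_spectra_22_5_3_5_arcs_partial} excludes $22$-planes containing a $1$-line, so a counting contradiction follows. The paper reaches the contradiction more quickly by summing only over the six planes through a single $1$-line $L$ (giving $\#\cK\le 1+5\cdot 21-5\cdot 1=101<104$), whereas you sum over all $31$ planes through $P$, which requires the additional (correct) covering observation that every plane through $P$ other than $\pi$ meets $\pi$ in one of the six $1$-lines, but yields the same conclusion.
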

\begin{proof}
  Assume that $H_0$ is a $1$-plane and consider a $1$-line $L$ in $H_0$. By $H_1,\dots,H_5$ we denote the other $5$ planes through $L$.  
  From Lemma~\ref{lemma_spectra_22_5_3_5_arcs_partial} we conclude $\cK(H_i)\le 21$ for all $1\le i\le 5$, so that 
  $\#\cK =\sum_{i=0}^5 \cK(H_i)-5\cdot\cK(L)\le 101<104$, which is a contradiction.
\end{proof}

The following implication of the standard equations will be important in the remaining part.
\begin{Lemma}
  \label{lemma_hyperplane_contribution}
  The spectrum $\left(a_i\right)$ of an $(n,s)$-arc $\cK$ in $\PG(k-1,q)$ satisfies
  $$
    \sum_{H\in\cH} { {s-\cK(H)} \choose 2} = {s\choose 2}\cdot[k]_q -n(s-1)\cdot[k-1]_q+{n\choose 2}\cdot[k-2]_q+q^{k-2}\cdot\sum_{i\ge 2} {i\choose 2}\lambda_i.
  $$
\end{Lemma}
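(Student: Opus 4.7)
The plan is to reduce the claim to the three standard equations (\ref{eq_standard_equation_1})--(\ref{eq_standard_equation_3}) by means of a single binomial identity applied termwise over all hyperplanes. The key algebraic observation is the polynomial identity
\begin{equation*}
  \binom{s-m}{2}=\binom{s}{2}-(s-1)m+\binom{m}{2},
\end{equation*}
valid for all integers $s,m\ge 0$, which one verifies by expanding both sides or by noting that the difference is a polynomial in $m$ of degree $2$ that vanishes at $m=0,1,2$.

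Next I would substitute $m=\cK(H)$ and sum the identity over all hyperplanes $H\in\cH$:
\begin{equation*}
  \sum_{H\in\cH}\binom{s-\cK(H)}{2}=\binom{s}{2}\sum_{H\in\cH} 1-(s-1)\sum_{H\in\cH}\cK(H)+\sum_{H\in\cH}\binom{\cK(H)}{2}.
\end{equation*}
Rewriting the right-hand sums by grouping hyperplanes according to their multiplicity, the three sums are exactly $\sum_{i\ge 0}a_i$, $\sum_{i\ge 0}ia_i$, and $\sum_{i\ge 0}\binom{i}{2}a_i$.

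Finally I would substitute the values provided by the three standard equations: $\sum_{i\ge 0}a_i=[k]_q$, $\sum_{i\ge 0}ia_i=n[k-1]_q$, and $\sum_{i\ge 0}\binom{i}{2}a_i=\binom{n}{2}[k-2]_q+q^{k-2}\sum_{i\ge 2}\binom{i}{2}\lambda_i$. Collecting the resulting expressions yields the stated formula, finishing the proof. There is no real obstacle here; the only thing to double-check is the normalization of the binomial identity (i.e.\ that the linear coefficient is $-(s-1)$ rather than $-s$), which is easily confirmed by testing the cases $m\in\{0,1,2\}$.
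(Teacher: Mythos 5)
Your proof is correct and follows essentially the same route as the paper: the paper likewise obtains the identity by expanding $\binom{s-i}{2}$ in terms of $1$, $i$, $\binom{i}{2}$ and substituting the three standard equations, merely stating this more tersely. Your explicit verification of the binomial identity $\binom{s-m}{2}=\binom{s}{2}-(s-1)m+\binom{m}{2}$ is a welcome addition but does not change the argument.
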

\begin{proof}
  From the standard equations we conclude 
  $$
    \sum_{i=0}^s {{s-i}\choose 2} a_i= {s\choose 2}\cdot[k]_q -n(s-1)\cdot[k-1]_q+{n\choose 2}\cdot[k-2]_q+q^{k-2}\cdot\sum_{i\ge 2} {i\choose 2}\lambda_i  
  $$
  and replace the left-hand side by $\sum_{H\in\cH} { {s-\cK(H)} \choose 2}$.
\end{proof}
The idea is to use some information on $\widetilde{\cK}$ to bound the left hand side of the equation in Lemma~\ref{lemma_hyperplane_contribution}. So, for a given 
$(n,s)$-arc $\cK$ in $\PG(k-1,q)$, where $k\ge 3$ and $H_0$ is a fixed hyperplane, we denote by $H_1(S),\dots,H_q(S)$ the $q$  other hyperplanes through $S$ 
and set     
\begin{equation}
  \label{eq_eta_i_j}
  \eta_{i,j}(H_0)=\underset{S\,:\,\cK(S)=i,\tilde{\cK}(\tilde{S})=j,S\le H_0,\dim(S)=k-2}{\max} \,\,\sum_{h=1}^q {{w-\cK(H_h(S))}\choose 2}.
\end{equation}  
If there exists no hyperline $S$ with $\cK(S)=i$ or $\tilde{\cK}(\tilde{S})=j$, then we set $\eta_{i,j}=0$. 
We abbreviate $\eta_{i,j}(H_0)$ as $\eta_{i,j}$ whenever $H_0$ is clear from the context. With this and $\sum_{i\ge 2} {i\choose 2}\lambda_i\ge 0$ we 
directly obtain: 
\begin{Lemma}
  \label{lemma_hyperplane_contribution_refined}
  Let $\cK$ be an $(n,s)$-arc in $\PG(k-1,q)$, where $k\ge 3$, $H_0$ be a hyperplane,  
  $b_{i,j}$ be the number of hyperlines $S$ in $H_0$ with $\cK(S)=i$ and $\tilde{\cK}(\tilde{S})=j$ of the restriction $\cK|_{H_0}$,   
  and $\widehat{\eta}_{i,j}$ some numbers satisfying $\eta_{i,j}\le\widehat{\eta}_{i,j}$ for all $i,j\in\N_0$. Then, we have
  \begin{eqnarray}
    \sum_{i,j} b_{i,j}\widehat{\eta}_i + {{s-\cK(H_0)}\choose 2} &\!\!\!\ge\!\!\!& {s\choose 2}\cdot[k]_q-n(s\!-\!1)[k\!-1\!]_q+{n\choose 2}\cdot[k\!-\!2]_q.\label{ie_hyperplane_contribution_refined}
  \end{eqnarray}   
\end{Lemma}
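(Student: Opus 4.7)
The plan is to derive this from Lemma~\ref{lemma_hyperplane_contribution} by bounding the sum on the left via a partition of the hyperplanes according to how they meet $H_0$. Since the term $q^{k-2}\cdot\sum_{i\ge 2}{i\choose 2}\lambda_i$ in Lemma~\ref{lemma_hyperplane_contribution} is non-negative, we immediately obtain the lower bound
\begin{equation*}
\sum_{H\in\cH}{s-\cK(H)\choose 2}\,\ge\,{s\choose 2}[k]_q-n(s-1)[k-1]_q+{n\choose 2}[k-2]_q.
\end{equation*}
Thus it suffices to prove that $\sum_{H\in\cH}{s-\cK(H)\choose 2}$ is at most ${s-\cK(H_0)\choose 2}+\sum_{i,j}b_{i,j}\widehat{\eta}_{i,j}$, and then combine the two inequalities.

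For the upper bound I would separate out the contribution of $H_0$ itself and partition the remaining hyperplanes by their intersection with $H_0$. Every hyperplane $H\ne H_0$ meets $H_0$ in a unique hyperline $S=H\cap H_0$ of $H_0$, and conversely every hyperline $S\le H_0$ lies in exactly $q+1$ hyperplanes of $\PG(k-1,q)$, namely $H_0$ together with the $q$ hyperplanes $H_1(S),\dots,H_q(S)$ from the definition (\ref{eq_eta_i_j}). This yields the disjoint decomposition
\begin{equation*}
\sum_{H\in\cH}{s-\cK(H)\choose 2}\,=\,{s-\cK(H_0)\choose 2}+\sum_{S\le H_0,\,\dim S=k-2}\,\sum_{h=1}^q{s-\cK(H_h(S))\choose 2}.
\end{equation*}

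By the very definition of $\eta_{i,j}(H_0)$ in (\ref{eq_eta_i_j}), the inner sum over $h$ is at most $\eta_{\cK(S),\widetilde{\cK}(\widetilde{S})}(H_0)\le\widehat{\eta}_{\cK(S),\widetilde{\cK}(\widetilde{S})}$. Grouping the hyperlines $S\le H_0$ by the pair $(i,j)=(\cK(S),\widetilde{\cK}(\widetilde{S}))$ and using the counts $b_{i,j}$ collapses the outer sum to $\sum_{i,j}b_{i,j}\widehat{\eta}_{i,j}$, which gives the desired upper bound. Combining with the lower bound from Lemma~\ref{lemma_hyperplane_contribution} produces (\ref{ie_hyperplane_contribution_refined}). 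No serious obstacle is expected: the only points to be careful about are that each $H\ne H_0$ really contributes to exactly one hyperline in $H_0$ (so the double counting is clean), and that the convention $\eta_{i,j}=0$ when no hyperline of the prescribed type exists is compatible with $b_{i,j}=0$ in that case so that no spurious terms appear.
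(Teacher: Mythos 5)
Your proposal is correct and is exactly the argument the paper intends: the paper gives no explicit proof, stating the lemma follows "directly" from Lemma~\ref{lemma_hyperplane_contribution}, the non-negativity of $\sum_{i\ge 2}{i\choose 2}\lambda_i$, and the definition of $\eta_{i,j}$, and your decomposition of $\cH$ into $H_0$ plus the $q$ other hyperplanes through each hyperline of $H_0$ is precisely that intended double-counting step (you also correctly read the $w$ in Equation~(\ref{eq_eta_i_j}) as $s$).
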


Plugging in our specific data $k=4$, $n=104$, $s=22$, and $q=5$, into Inequality~(\ref{ie_hyperplane_contribution_refined}) gives
\begin{equation}
  \label{ie_no_104_22_main_1}
  \sum_{i,j} b_{i,j}\widehat{\eta}_{i,j} \,+\, {{22-\cK(H_0)}\choose 2} \ge 468. 
\end{equation}
Summing up the multiplicities of the lines $\tilde{L}$ through $\tilde{H}_0$ gives
\begin{equation}
  \label{ie_no_104_22_main_2}
  \#\tilde{\cK}=\tilde{\cK}\!\left(\tilde{H}_0\right)+\sum_{i,j} b_{i,j}\left(j-\tilde{\cK}\!\left(\tilde{H}_0\right)\right)\ge 163,
\end{equation}
taking Lemma~\ref{lemma_lower_bound_tilde_k} into account. The strategy of the remaining argumentation is the following. We pick a not excluded possibility 
for the multiplicity $\cK(H_0)$ of a hyperplane $H_0$ and determine some information on the spectrum $\left(b_i\right)$ of $\cK|_{H_0}$ and compute 
values $\widehat{\eta}_{i,j}$ based on the current knowledge of the possible hyperplane multiplicities with respect to $\cK$. Surely, the unknown 
values $b_{i,j}\in\N_0$ are linked to the $b_i$ via
$$
  \sum_j b_{i,j}=b_i
$$
for all $i\in \N_0$. Then we will show that Inequality~(\ref{ie_no_104_22_main_1}) and Inequality~(\ref{ie_no_104_22_main_2}) cannot be satisfied simultaneously. 

In the following lemmas we always start with a hyperplane $H_0$ of a $(104,22)$-arc $\cK$ in $\PG(3,5)$. For an arbitrary fixed line $L$ in $H_0$ we denote by  
$H_1(L),\dots,H_5(L)$ the other $5$ planes through $L$. For brevity, we write $H_i$ instead of $H_i(L)$, where $1\le i\le 5$.  
\begin{Lemma}
  \label{lemma_104_22_3_5_a0}
  Let $\cK$ be a $(104,22)$-arc in $\PG(3,5)$. Then $a_0=0$.
\end{Lemma}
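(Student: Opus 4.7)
The plan is by contradiction: suppose $H_0$ is a $0$-plane. Then $\cK(H_0)=0$, which via $\widetilde{\cK}(\widetilde{H}_0)\equiv 104+3-\cK(H_0)\pmod 5$ gives $\widetilde{\cK}(\widetilde{H}_0)=2$, and every line $L\subset H_0$ has $\cK(L)=0$. In the notation of Lemma~\ref{lemma_hyperplane_contribution_refined} this forces $b_{i,j}=0$ whenever $i\neq 0$ and $\sum_j b_{0,j}=31$. With $\cK(H_0)=0$ Inequality~(\ref{ie_no_104_22_main_1}) becomes $\sum_j b_{0,j}\widehat{\eta}_{0,j}\ge 468-231=237$, and Inequality~(\ref{ie_no_104_22_main_2}) combined with $\#\widetilde{\cK}\ge 163$ from Lemma~\ref{lemma_lower_bound_tilde_k} becomes $\sum_j b_{0,j}(j-2)\ge 161$.

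The next step is to determine which values of $j=\widetilde{\cK}(\widetilde{L})\in\{3,8,13,18\}$ are feasible, and in the feasible cases to compute a sharp upper bound for $\widehat{\eta}_{0,j}$. For every line $L\subset H_0$ the other five planes $H_1,\dots,H_5$ through $L$ must satisfy $\sum_{h=1}^5\cK(H_h)=104$ and $\sum_{h=1}^5\widetilde{\cK}(\widetilde{H}_h)=j-2$, where the admissible values of $\cK(H_h)$ are controlled by $\widetilde{\cK}(\widetilde{H}_h)\in\{0,1,2,3\}$ through $\cK\in\{22\},\{6,11,16,21\},\{0,5,10,15,20\},\{4,9,14,19\}$ respectively. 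The case $j=18$ is immediately impossible since the required dual sum $16$ exceeds $5\cdot 3=15$. For $j=13$ I would enumerate the distributions $(c_0,c_1,c_2,c_3)$ of dual multiplicities with $\sum c_k=5$ and $c_1+2c_2+3c_3=11$; in each of the four resulting cases the maximum of $\sum_h \cK(H_h)$ equals $99<104$, so $b_{0,13}=0$.

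For the two remaining values I would again enumerate: $j=3$ admits only the distribution $(4,1,0,0)$ and the sum constraint forces four $22$-planes together with one $16$-plane, giving $\widehat{\eta}_{0,3}=\binom{6}{2}=15$; for $j=8$ the six admissible distributions each lead to a tightly constrained multiset of hyperplane multiplicities, and a short comparison shows that the maximum of $\sum_h\binom{22-\cK(H_h)}{2}$ equals $2\binom{3}{2}=6$, attained at $(c_0,c_1,c_2,c_3)=(3,0,0,2)$ with multiplicities $22,22,22,19,19$; hence $\widehat{\eta}_{0,8}=6$.

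With these values the three conditions on $(b_{0,3},b_{0,8})$ read $b_{0,3}+b_{0,8}=31$, $b_{0,3}+6b_{0,8}\ge 161$ and $15b_{0,3}+6b_{0,8}\ge 237$. The first two give $5b_{0,8}\ge 130$, i.e.\ $b_{0,8}\ge 26$, while eliminating $b_{0,3}$ in the third yields $9b_{0,8}\le 228$, i.e.\ $b_{0,8}\le 25$, which is the desired contradiction. The main obstacle is the detailed but routine case analysis needed to discard $j\in\{13,18\}$ and to pin down the sharp values of $\widehat{\eta}_{0,3}$ and $\widehat{\eta}_{0,8}$; once these are in hand the final arithmetic is immediate.
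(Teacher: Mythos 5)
Your proposal is correct and follows essentially the same route as the paper: it applies Inequalities~(\ref{ie_no_104_22_main_1}) and~(\ref{ie_no_104_22_main_2}) to a putative $0$-plane $H_0$ with the same sharp bounds $\widehat{\eta}_{0,3}=15$ (from $(0,22,22,22,22,16)$) and $\widehat{\eta}_{0,8}=6$ (from $(0,22,22,22,19,19)$), and derives the same contradiction, merely parameterized by $b_{0,8}$ instead of the paper's $x=b_{0,3}$. Your explicit elimination of $j\in\{13,18\}$ is a welcome spelling-out of what the paper subsumes under ``looping over all possibilities.''
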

\begin{proof}
  Let $H_0$ be a $0$-plane, so that $\cK(L)=0$ for each line $L$ in $H_0$. Looping over all possibilities, while taking into account 
  $\cK(H_i)\in\{0,4,5,6,9,10,11,14,15,16,19,20,21,22\}$, we compute the values of $\sum_{i=1}^{5} {{22-\cK(H_i)}\choose 2}$ as follows:
  \begin{center}
    \begin{tabular}{|c|c|c|c|c|}
      \hline
      $\cK(L)$ & $\tilde{\cK}(\tilde{L})$ & $\left(\cK(H_0),\dots,\cK(H_5)\right)$ & $\sum_{i=1}^{5} {{22-\cK(H_i)}\choose 2}$ & type of $\tilde{L}$\\
      \hline
      0 &  3 & $(0,22,22,22,22,16)$ & 15 & $A_2$ \\
      \hline
      0 &  8 & $(0,21,21,21,21,20)$ & 1  & $B_8$ \\
        &    & $(0,22,21,21,20,20)$ & 2  & $B_7$ \\
        &    & $(0,22,21,21,21,19)$ & 3  & $B_4$ \\
        &    & $(0,22,22,20,20,20)$ & 3  & $B_6$ \\ 
        &    & $(0,22,22,21,20,19)$ & 4  & $B_3$ \\ 
        &    & $(0,22,22,22,19,19)$ & 6  & $B_1$ \\     
      \hline   
    \end{tabular}
  \end{center}  
  We can condense this information to the following non-zero upper bounds for $\widehat{\eta}_{i,j}$:  
  \begin{center}
    \begin{tabular}{|c|c|c|c|}
      \hline
      $i=\cK(L)$ & $j=\tilde{\cK}(\tilde{L})$ & $\widehat{\eta}_{i,j}$ & $\left(\cK(H_0),\dots,\cK(H_5)\right)$ \\
      \hline   
      0 &  3 & 15 & $(0,22,22,22,22,16)$ \\
      0 &  8 &  6 & $(0,22,22,22,19,19)$ \\     
      \hline
    \end{tabular}    
  \end{center}
  Denote by $x$ the number of lines $L$ in $H_0$ such that $\tilde{\cK}(\tilde{L})=3$. With this, we have $b_{0,3}=x$ and $b_{0,8}=31-x$. With this, 
  Inequality~(\ref{ie_no_104_22_main_1})  gives   
  $$
    x\cdot 15+(31-x)\cdot 6\,+\, {22\choose 2} \,\,\ge\,\, 468,
  $$
  so that $x\ge \left\lceil\frac{51}{9}\right\rceil=6$. 
  Using $\widetilde{\cK}(\widetilde{H}_0)=2$ Inequality~(\ref{ie_no_104_22_main_2}) yields
  $$
    \#\widetilde{K}=2+x\cdot 1+(31-x)\cdot 6=188-5x\le 158 <163,
  $$
  which is a contradiction.
\end{proof}
In the following lemmas we will not list the values $\sum_{i=1}^{5} {{22-\cK(H_i)}\choose 2}$ for all possibilities but just the resulting  
non-zero upper bounds for $\widehat{\eta}_{i,j}$.

\begin{Lemma}
  \label{lemma_104_22_3_5_a4}
  Let $\cK$ be a $(104,22)$-arc in $\PG(3,5)$. Then $a_4=0$.
\end{Lemma}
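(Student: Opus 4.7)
The plan is to mirror the structure of Lemma~\ref{lemma_104_22_3_5_a0}: assume a $4$-plane $H_0$ exists, extract the combinatorial data of $\cK|_{H_0}$, compute upper bounds $\widehat{\eta}_{i,j}$ for every admissible line type $(i,j)$ in $H_0$, and show that Inequalities~(\ref{ie_no_104_22_main_1}) and~(\ref{ie_no_104_22_main_2}) are jointly infeasible.

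First I would describe the restriction $\cK|_{H_0}$. Since $\gamma_1=1$ by Lemma~\ref{lemma_104_22_3_5_straightforward}(b), the $4$ points of $\cK|_{H_0}$ are distinct, and since Lemma~\ref{lemma_104_22_3_5_straightforward}(a) bounds line multiplicities in a $4$-plane by $\lfloor(6+4)/5\rfloor=2$, no three of them are collinear. Counting incidences in $\PG(2,5)$ gives $b_2=\binom{4}{2}=6$, $b_1=12$, $b_0=13$. Next, noting that $\tilde{\cK}(\tilde H_0)=(22-4)\bmod 5=3$, the relation $\tilde{\cK}(\tilde L)\equiv 3\pmod 5$ forces $j\in\{3,8,13,18\}$ for every line $L\subset H_0$.

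The core of the proof is the enumeration of $\widehat{\eta}_{i,j}$. Writing $a_h:=22-\cK(H_h)$ for the five further planes $H_h\supset L$, the identity $\sum_h\cK(H_h)=104-\cK(H_0)+5\cK(L)$ gives $\sum_h a_h=10-5\cK(L)$, while Lemma~\ref{lemma_104_22_3_5_straightforward}(c) and Lemmas~\ref{lemma_104_22_3_5_a0}, \ref{lemma_104_22_3_5_a1} constrain $a_h\in\{0,1,2,3,6,7,8,11,12,13,16,17,18\}$; also $j-3=\sum_h(a_h\bmod 5)$. A short case analysis on the partitions of $10-5i$ should yield: $(i,j)=(2,3)$ forces all $a_h=0$, so $\widehat{\eta}_{2,3}=0$; $(i,j)=(1,8)$ admits only $a_h\in\{0,1,2,3\}$ and gives $\widehat{\eta}_{1,8}=4$, attained by the pattern $(3,2,0,0,0)$, while other values of $j$ are excluded; $(i,j)=(0,8)$ forces exactly one $a_h\in\{6,7,8\}$ and gives $\widehat{\eta}_{0,8}=29$, attained at $(8,2,0,0,0)$; $(i,j)=(0,13)$ admits only $a_h\in\{0,1,2,3\}$ and gives $\widehat{\eta}_{0,13}=9$, attained at $(3,3,3,1,0)$; and the values $(0,3)$, $(0,18)$ are excluded because the corresponding sum of $a_h$'s cannot equal $10$.

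Finally I would plug these bounds into the two master inequalities. Using $b_{0,3}=b_{0,18}=0$, $b_{0,8}+b_{0,13}=13$, $b_{1,8}=12$, $b_{2,3}=6$, Inequality~(\ref{ie_no_104_22_main_1}) becomes $29b_{0,8}+9(13-b_{0,8})+48+\binom{18}{2}\ge 468$, which simplifies to $b_{0,8}\ge 8$; and Inequality~(\ref{ie_no_104_22_main_2}) becomes $3+5b_{0,8}+10(13-b_{0,8})+5\cdot 12\ge 163$, which simplifies to $b_{0,8}\le 6$. The contradiction proves $a_4=0$. The main obstacle is the enumeration for $(i,j)=(0,8)$, where the maximum $29$ is achieved by the asymmetric partition $(8,2,0,0,0)$ rather than by the more symmetric near-balanced patterns; verifying that the forbidden plane multiplicities $\{2,3,7,8,12,13,17,18\}$ do not leave room for a larger configuration is the only place where a careful, non-mechanical check is required.
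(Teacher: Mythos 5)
Your proposal is correct and follows essentially the same route as the paper: restrict to the $4$-plane, compute the spectrum $b_0=13$, $b_1=12$, $b_2=6$ of the $(4,2)$-arc, bound $\widehat{\eta}_{i,j}$ using the admissible plane multiplicities, and derive the contradiction $b_{0,8}\ge 8$ versus $b_{0,8}\le 6$ from the two master inequalities. The only (harmless) deviation is $\widehat{\eta}_{1,8}=4$ instead of the paper's tight value $0$, which the paper obtains by additionally invoking that a $22$-plane contains no $1$-line; since $\widehat{\eta}_{i,j}$ need only be an upper bound, your weaker constant still yields the contradiction.
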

\begin{proof}
  Let $H_0$ be a $4$-plane. From Lemma~\ref{lemma_residual} we conclude $\cK(L)\le 2$ for each line $L$ in $H_0$. Looping over all 
  possibilities, while taking into account $$\cK(H_i)\in\{4,5,6,9,10,11,14,15,16,19,20,21,22\}$$ and that a $22$-plane cannot contain a $1$-line, we compute 
  the following non-zero upper bounds for $\widehat{\eta}_{i,j}$:
  \begin{center}
    \begin{tabular}{|c|c|c|c|}
      \hline
      $i=\cK(L)$ & $j=\tilde{\cK}(\tilde{L})$ & $\widehat{\eta}_{i,j}$ & $\left(\cK(H_0),\dots,\cK(H_5)\right)$ \\
      \hline
      2 & 3 &  0 & $(4,22,22,22,22,22)$ \\
      \hline
      1 & 8 &  0 & $ (4,21,21,21,21,21)$ \\
      \hline   
      0 &  8 & 29 & $(4,22,22,22,20,14)$ \\
      0 & 13 &  9 & $(4,22,21,19,19,19)$ \\     
      \hline
    \end{tabular}    
  \end{center}
  Denote by $x$ the number of lines $L$ in $H_0$ such that $\cK(L)=0$ and $\tilde{\cK}(\tilde{L})=8$. Note that $\cK|_{H_0}$ is a $(4,2)$-arc in 
  $\PG(2,5)$ with spectrum $b_0=13$, $b_1=12$, $b_2=6$, so that $b_{2,3}=6$,\footnote{An $(n,2)$-arc in $\PG(2,q)$ has spectrum $b_2={n\choose 2}$, $b_1=n\cdot (q+2-n)$, $b_0=q^2+q+1-b_1-b_2$.} $b_{1,8}=12$, $b_{0,8}=x$, and $b_{0,13}=13-x$. With this, 
  Inequality~(\ref{ie_no_104_22_main_1}) reads   
  $$
    6\cdot 0+ 12\cdot 0 x\cdot 29+(13-x)\cdot 9\,+\, {18\choose 2} \,\,\ge\,\, 468,
  $$
  so that $x\ge \left\lceil\tfrac{99}{10}\right\rceil=10$. Using $\widetilde{\cK}(\widetilde{H}_0)=3$ this contradicts Inequality~(\ref{ie_no_104_22_main_2}) since
  $$
    \#\widetilde{K}=3+6\cdot 0+ 12\cdot 5+ x\cdot 5+(13-x)\cdot 10=193-5x\le 143<163.
  $$
\end{proof}

\begin{Lemma}
  \label{lemma_104_22_3_5_a5}
  Let $\cK$ be a $(104,22)$-arc in $\PG(3,5)$. Then $a_5=0$.
\end{Lemma}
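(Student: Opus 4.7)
The plan is to follow the template of Lemmas~\ref{lemma_104_22_3_5_a0} and \ref{lemma_104_22_3_5_a4}. Assume that $H_0$ is a $5$-plane of $\cK$. Since $\gamma_1 = 1$ and Lemma~\ref{lemma_104_22_3_5_straightforward}(a) caps line multiplicities inside $H_0$ by $\lfloor(6+5)/5\rfloor = 2$, the restriction $\cK|_{H_0}$ is a set of five points with no three collinear, whose line spectrum is uniquely determined by the standard equations to be $b_0 = 11$, $b_1 = 10$, $b_2 = 10$.

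For each line $L \le H_0$ I would enumerate the admissible tuples $(\cK(H_1),\dots,\cK(H_5))$ subject to $\sum_{i=1}^5 \cK(H_i) = 99 + 5\cK(L)$, the allowed hyperplane multiplicities $\{5,6,9,10,11,14,15,16,19,20,21,22\}$ (from Lemmas~\ref{lemma_104_22_3_5_straightforward}, \ref{lemma_104_22_3_5_a0} and \ref{lemma_104_22_3_5_a4}), and the restriction from Lemma~\ref{lemma_spectra_22_5_3_5_arcs_partial} that a $22$-plane carries no $1$-line, and then read off $\tilde{\cK}(\tilde{L})$ from the residues via $\tilde{\cK}(\tilde{H}_i) = (2-\cK(H_i))\bmod 5 \in\{0,1,2,3\}$ and $\tilde{\cK}(\tilde{H}_0) = 2$. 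I expect this to yield the non-zero upper bounds $\widehat{\eta}_{2,3} = 0$ from the unique tuple $(22,22,22,22,21)$, $\widehat{\eta}_{1,8} = 1$ from the unique tuple $(21,21,21,21,20)$, and for $\cK(L) = 0$ the bounds $\widehat{\eta}_{0,3} = 55$ (realized by $(11,22,22,22,22)$), $\widehat{\eta}_{0,8} = 31$ (realized by $(14,19,22,22,22)$), and $\widehat{\eta}_{0,13} = 10$; the case $\tilde{\cK}(\tilde{L}) = 18$ is excluded because five residues in $\{0,1,2,3\}$ sum to at most $15$. The sum constraint $99$ together with $\cK(H_i)\le 22$ also forces $\cK(H_i)\ge 11$ in the $0$-line case, which keeps the enumeration manageable.

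Writing $x = b_{0,3}$, $y = b_{0,8}$, $z = b_{0,13}$ with $x + y + z = 11$, Inequality~(\ref{ie_no_104_22_main_1}) becomes $55x + 31y + 10z \ge 322$, while Inequality~(\ref{ie_no_104_22_main_2}), combined with Lemma~\ref{lemma_lower_bound_tilde_k} and the contributions from the $10$ one-lines and $10$ two-lines, yields $\#\tilde{\cK} = 72 + x + 6y + 11z \ge 163$. Substituting $z = 11 - x - y$ reduces these to $45x + 21y \ge 212$ and $2x + y \le 6$; plugging $y \le 6 - 2x$ into the first forces $3x \ge 86$, hence $x \ge 29$, contradicting $x \le 11$.

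The main obstacle is the multiset enumeration for the $0$-line case: one has to verify that no configuration of hyperplane multiplicities summing to $99$ with the prescribed residue pattern has been missed when claiming the three stated values of $\widehat{\eta}_{0,j}$. Once that bookkeeping is in place, the remaining inequality manipulation is elementary.
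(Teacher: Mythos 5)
Your proposal is correct and follows essentially the same route as the paper's proof: the same $(5,2)$-arc spectrum $b_0=11$, $b_1=10$, $b_2=10$, the same non-zero bounds $\widehat{\eta}_{2,3}=0$, $\widehat{\eta}_{1,8}=1$, $\widehat{\eta}_{0,3}=55$, $\widehat{\eta}_{0,8}=31$, $\widehat{\eta}_{0,13}=10$ with the same realizing hyperplane tuples, and the same pair of inequalities $45x+21y\ge 212$ and $2x+y\le 6$, combined in an equivalent (merely reordered) way to reach the contradiction.
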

\begin{proof}
  Let $H_0$ be a $5$-plane. From Lemma~\ref{lemma_residual} we conclude $\cK(L)\le 2$ for each line $L$ in $H_0$. Looping over all 
  possibilities, while taking into account $$\cK(H_i)\in\{5,6,9,10,11,14,15,16,19,20,21,22\}$$ and that a $22$-plane cannot contain a $1$-line, we compute 
  the following non-zero upper bounds for $\widehat{\eta}_{i,j}$:
  \begin{center}
    \begin{tabular}{|c|c|c|c|}
      \hline
      $i=\cK(L)$ & $j=\tilde{\cK}(\tilde{L})$ & $\widehat{\eta}_{i,j}$ & $\left(\cK(H_0),\dots,\cK(H_5)\right)$ \\
      \hline
      2 &  3 &  0 & $(5,22,22,22,22,21)$ \\
      \hline
      1 &  8 &  1 & $(5,21,21,21,21,20)$ \\
      \hline
      0 &  3 & 55 & $(5,22,22,22,22,11)$ \\    
      0 &  8 & 31 & $(5,22,22,22,19,14)$ \\
      0 & 13 & 10 & $(5,22,20,19,19,19)$ \\     
      \hline
    \end{tabular}    
  \end{center}

  Denote by $x$ the number of lines $L$ in $H_0$ such that $\cK(L)=0$ and $\tilde{\cK}(\tilde{L})=3$ and by $y$ the number of lines $L$ in $H_0$ 
  such that $\cK(L)=0$ and $\tilde{\cK}(\tilde{L})=8$. Note that $\cK|_{H_0}$ is a $(5,2)$-arc in $\PG(2,5)$ with spectrum $a_0=11$, $a_1=10$, $a_2=10$, 
  so that $b_{2,3}=10$, $b_{1,8}=10$, $b_{0,3}=x$, $b_{0,8}=y$, and $b_{0,11}=11-x-y$. With this, 
  Inequality~(\ref{ie_no_104_22_main_1}) reads   
  $$
    10\cdot 0+ 10\cdot 1 +x\cdot 55+y\cdot 31+(11-x-y)\cdot 10\,+\, {17\choose 2} \,\,\ge\,\, 468,
  $$
  so that $45x+21y \ge 212$, which implies $90x+45y\ge 90x+42y\ge 424$. Thus, we have $2x+y\ge 10$. 
  Combining this with $\widetilde{\cK}(\widetilde{H}_0)=2$, Inequality~(\ref{ie_no_104_22_main_2}) yields the contradiction  
  $$
    \#\widetilde{K}=2+10\cdot 1+ 10\cdot 6+ x\cdot 1+y\cdot 6+(11-x-y)\cdot 11=193-10x-5y\le 143<163.
  $$
\end{proof}

\begin{Lemma}
  \label{lemma_104_22_3_5_a6}
  Let $\cK$ be a $(104,22)$-arc in $\PG(3,5)$. Then $a_6=0$.
\end{Lemma}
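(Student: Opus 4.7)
The plan is to follow the blueprint of Lemmas~\ref{lemma_104_22_3_5_a0}, \ref{lemma_104_22_3_5_a4}, and~\ref{lemma_104_22_3_5_a5}. Suppose for contradiction that $H_0$ is a $6$-plane; then $\widetilde{\cK}(\widetilde{H}_0) = 1$, Lemma~\ref{lemma_residual} forces $\cK(L) \le 2$ for every line $L \subset H_0$, and by Lemma~\ref{lemma_spectra_6_2_3_5_arcs} the restriction $\cK|_{H_0}$ is a $(6,2)$-arc with $b_0 = 10$, $b_1 = 6$, $b_2 = 15$. Combining Lemma~\ref{lemma_104_22_3_5_straightforward} with the already-established $a_0 = a_1 = a_4 = a_5 = 0$, the hyperplane multiplicities of $\cK$ lie in $\{4,5,6,9,10,11,14,15,16,19,20,21,22\}$, and by Lemma~\ref{lemma_spectra_22_5_3_5_arcs_partial} no $22$-plane contains a $1$-line. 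For each pair $(i,j) = (\cK(L), \widetilde{\cK}(\widetilde{L}))$ with $i \in \{0,1,2\}$ and $j \in \{3,8,13,18\}$ I would enumerate the admissible configurations $(\cK(H_1),\dots,\cK(H_5))$ satisfying $\sum_h \cK(H_h) = 98 + 5i$ and $\sum_h \widetilde{\cK}(\widetilde{H}_h) = j - 1$, and extract $\widehat{\eta}_{i,j}$ as the maximum of $\sum_{h=1}^{5}\binom{22-\cK(H_h)}{2}$ over these.

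The enumeration should yield the nontrivial values $\widehat{\eta}_{2,3} = 1$, $\widehat{\eta}_{1,8} = 3$, $\widehat{\eta}_{0,3} = 66$, $\widehat{\eta}_{0,8} = 31$, and $\widehat{\eta}_{0,13} = 12$, attained respectively by $(22,22,22,22,20)$, $(21,21,21,21,19)$, $(22,22,22,22,10)$, $(22,22,21,19,14)$, and $(22,19,19,19,19)$; and it should show that no other pair $(i,j)$ admits any valid configuration. In particular, every $2$-line forces $\widetilde{\cK}(\widetilde{L}) = 3$ and every $1$-line forces $\widetilde{\cK}(\widetilde{L}) = 8$, so $b_{2,3} = 15$ and $b_{1,8} = 6$; writing $x = b_{0,3}$, $y = b_{0,8}$, $z = b_{0,13}$ yields $x + y + z = 10$.

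Feeding this data into Inequality~(\ref{ie_no_104_22_main_1}) with $\binom{22-6}{2} = 120$ produces $66 x + 31 y + 12 z \ge 315$, while Inequality~(\ref{ie_no_104_22_main_2}) together with Lemma~\ref{lemma_lower_bound_tilde_k} yields $73 + 2x + 7y + 12z \ge 163$. Eliminating $z = 10 - x - y$ converts these into $54 x + 19 y \ge 195$ and $2 x + y \le 6$ respectively; the second forces $x \le 3$, while substituting $y \le 6 - 2x$ into the first gives $16 x \ge 81$ and hence $x \ge 6$, a contradiction. The delicate step is the enumeration of configurations for $\widehat{\eta}_{0,8}$, where six distribution types $(n_0,n_1,n_2,n_3)$ of dual multiplicities must each be checked against the sum constraint, and in particular the maximum of $31$ is attained only by the somewhat atypical configuration $(22,22,21,19,14)$ coming from the type $(n_0,n_1,n_2,n_3) = (2,1,0,2)$; once this table is compiled the remaining linear arithmetic directly parallels the preceding lemmas.
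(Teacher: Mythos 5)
Your proposal is correct and follows essentially the same route as the paper: the same $\widehat{\eta}_{i,j}$ table (with identical extremal configurations), the same use of the $(6,2)$-arc spectrum $b_0=10$, $b_1=6$, $b_2=15$, and the same two inequalities reducing to $54x+19y\ge 195$ versus $2x+y\le 6$, differing only in the final (equivalent) arithmetic step. The one small slip is that after invoking $a_4=a_5=0$ you still list $4$ and $5$ among the admissible hyperplane multiplicities; this is harmless here since no configuration summing to $98+5i$ can accommodate a $4$- or $5$-plane, but the set should read $\{6,9,10,11,14,15,16,19,20,21,22\}$.
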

\begin{proof}
  Let $H_0$ be a $6$-plane. From Lemma~\ref{lemma_residual} we conclude $\cK(L)\le 2$ for each line $L$ in $H_0$. Looping over all 
  possibilities, while taking into account $$\cK(H_i)\in\{6,9,10,11,14,15,16,19,20,21,22\}$$ and that a $22$-plane cannot contain a $1$-line, we compute 
  the following non-zero upper bounds for $\widehat{\eta}_{i,j}$:  
  \begin{center}
    \begin{tabular}{|c|c|c|c|}
      \hline
      $i=\cK(L)$ & $j=\tilde{\cK}(\tilde{L})$ & $\widehat{\eta}_{i,j}$ & $\left(\cK(H_0),\dots,\cK(H_5)\right)$ \\
      \hline   
      2 &  3 &  1 & $(6,22,22,22,22,20)$ \\   
      \hline
      1 &  8 &  3 & $(6,21,21,21,21,19)$ \\ 
      \hline 
      0 &  3 & 66 & $(6,22,22,22,22,10)$ \\
      0 &  8 & 31 & $(6,22,22,21,19,14)$ \\     
      0 & 13 & 12 & $(6,22,19,19,19,19)$ \\ 
      \hline
    \end{tabular}    
  \end{center}
  
  Denote by $x$ the number of lines $L$ in $H_0$ such that $\cK(L)=0$ and $\tilde{\cK}(\tilde{L})=3$ and by $y$ the number of lines $L$ in $H_0$ such that $\cK(L)=0$ and 
  $\tilde{\cK}(\tilde{L})=8$.
   
  From Lemma~\ref{lemma_residual} and the non-existence of $(6,1)$-arcs in $\PG(2,5)$ we conclude that the restricted arc $\cK|_{H_0}$ is a $(6,2)$-arc in $\PG(2,5)$. 
  Let $\left(b_i\right)$ be the spectrum of $\cK|_{H_0}$. Given the above enumeration of the possible combinations of $i=\cK(L)$ and  $j=\tilde{\cK}(\tilde{L})$ we obtain 
  $b_{2,3}  = b_2$, $b_{1,8} = b_1$, $b_{0,3}=x$, $b_{0,8} = y$, and $b_{0,13} = b_0-x-y$, 
  so that Inequality~(\ref{ie_no_104_22_main_1}) reads    
  \begin{equation}
    \label{eq_104_22_3_5_a6_1}
    b_2\cdot 1+b_1\cdot 3+x\cdot 66+y\cdot 31+\left(b_0-x-y\right)\cdot 12\,+\, {16\choose 2} \,\,\ge\,\, 468
  \end{equation}
  and combining $\widetilde{\cK}(\widetilde{H}_0)=1$ with Inequality~(\ref{ie_no_104_22_main_2}) gives
  \begin{equation}
    \label{eq_104_22_3_5_a6_2}
    \#\widetilde{\cK}=1+b_2\cdot 2+b_1\cdot 7+x\cdot 2+y\cdot 7+\left(b_0-x-y\right)\cdot 12\ge 163.
  \end{equation}
  Plugging in $b_0=10$, $b_1=6$, and $b_2=15$, see Lemma~\ref{lemma_spectra_6_2_3_5_arcs}, into Inequality~(\ref{eq_104_22_3_5_a6_1}) and  
  Inequality~(\ref{eq_104_22_3_5_a6_2}) gives
  \begin{equation}
    \label{eq_104_22_3_5_a6_3}
    54x+19y \ge 195
  \end{equation}
  and
  $$
    \#\widetilde{\cK} = 193-10x-5y\ge 163,
  $$ 
  respectively. The latter constraint yields $2x+y\le 6$, so that
  $$
    54x+19y \le 27(2x+y)\le  162,
  $$
  which contradicts Inequality~(\ref{eq_104_22_3_5_a6_3}).
\end{proof}
Note that our application of Inequality~(\ref{eq_104_22_3_5_a6_2}) differs from the one in the proof of \cite[Lemma 4.4]{landjev2016non} due to a typo; the 
approach however is essentially the same.

\begin{Lemma}
  \label{lemma_104_22_3_5_a9}
  Let $\cK$ be a $(104,22)$-arc in $\PG(3,5)$. Then $a_9=0$.
\end{Lemma}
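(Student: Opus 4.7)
The plan is to run the same template as Lemmas~\ref{lemma_104_22_3_5_a0}--\ref{lemma_104_22_3_5_a6}: pick a hypothetical $9$-plane $H_0$, bound the contributions $\widehat{\eta}_{i,j}$ by looping over the admissible hyperplane multiplicities through each line of $H_0$, and then show that Inequality~(\ref{ie_no_104_22_main_1}) forces too many lines with ``large'' $\widehat{\eta}_{i,j}$ while Inequality~(\ref{ie_no_104_22_main_2}) forces too few such lines. Concretely, Lemma~\ref{lemma_residual} yields $\cK(L)\le 3$ for every line $L$ in $H_0$, so $\cK|_{H_0}$ is a $(9,3)$-arc in $\PG(2,5)$ whose spectrum $(b_0,b_1,b_2,b_3)$ is given by Lemma~\ref{lemma_spectra_9_3_3_5_arcs_partial}. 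Since $104+3-9\equiv 3\pmod 5$ we also have $\widetilde{\cK}(\widetilde{H}_0)=3$.

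Next, for each $i\in\{0,1,2,3\}$, I enumerate all $6$-tuples $(\cK(H_0),\dots,\cK(H_5))=(9,\cK(H_1),\dots,\cK(H_5))$ with entries in $\{9,10,11,14,15,16,19,20,21,22\}$ (using Lemmas~\ref{lemma_104_22_3_5_a0}--\ref{lemma_104_22_3_5_a6} together with Lemma~\ref{lemma_104_22_3_5_straightforward}(c) and Lemma~\ref{lemma_104_22_3_5_a1}) summing to $104+5i$, and I discard the tuples containing a $22$-plane when $i=1$ (Lemma~\ref{lemma_spectra_22_5_3_5_arcs_partial}). For each admissible tuple I record the pair $\bigl(\sum_{h=1}^{5}\binom{22-\cK(H_h)}{2},\ 3+\sum_{h=1}^{5}(2-\cK(H_h))\bmod 5\bigr)$, and I take the maximum of the first coordinate over all tuples giving the same $j=\widetilde{\cK}(\widetilde{L})$ as the definition of $\widehat{\eta}_{i,j}$. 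This is a small, purely mechanical case distinction, exactly of the form carried out in Lemmas~\ref{lemma_104_22_3_5_a4}--\ref{lemma_104_22_3_5_a6}.

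Finally, introducing nonnegative unknowns $b_{i,j}$ counting the lines of $H_0$ with $(\cK(L),\widetilde{\cK}(\widetilde{L}))=(i,j)$ and subject to $\sum_{j}b_{i,j}=b_i$, Inequality~(\ref{ie_no_104_22_main_1}) specializes to
\[
\sum_{i,j} b_{i,j}\,\widehat{\eta}_{i,j} \;+\; \binom{13}{2} \;\ge\; 468,
\]
while Lemma~\ref{lemma_lower_bound_tilde_k} together with Inequality~(\ref{ie_no_104_22_main_2}) gives
\[
3 \;+\; \sum_{i,j} b_{i,j}\,(j-3) \;\ge\; 163.
\]
Plugging in the spectrum $b_0=13-b_3$, $b_1=3b_3-18$, $b_2=36-3b_3$ and the computed $\widehat{\eta}_{i,j}$, I expect that a suitable linear combination of these two inequalities produces a contradiction for every $b_3\in\{6,\dots,12\}$, mirroring the endgame of Lemma~\ref{lemma_104_22_3_5_a6}.

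The main obstacle compared with the previous lemmas is that $\cK(L)$ can now be as large as $3$, so the number of admissible $5$-tuples of hyperplane multiplicities (and hence of $(i,j)$ pairs with positive $\widehat{\eta}_{i,j}$) is noticeably larger, and the restriction spectrum has a free parameter $b_3$. The extra work is therefore to bundle the $(i,j)$ pairs into a small number of variables (e.g.\ one per $(\cK(L),\widetilde{\cK}(\widetilde{L}))$-class actually achievable) and to verify the final two-variable/one-parameter linear-arithmetic contradiction uniformly in $b_3$; this is tedious but routine.
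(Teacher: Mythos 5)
Your plan is exactly the paper's proof: a $9$-plane $H_0$ with $\widetilde{\cK}(\widetilde{H}_0)=3$, the $(9,3)$-arc spectrum from Lemma~\ref{lemma_spectra_9_3_3_5_arcs_partial}, the enumeration of hyperplane-multiplicity tuples over $\{9,10,11,14,15,16,19,20,21,22\}$ (with $22$-planes barred from $1$-lines) to get the $\widehat{\eta}_{i,j}$, and then the two inequalities combined linearly. The paper carries out precisely this computation, obtaining $8x+64u+19v\ge 177+6b_3\ge 213$ from Inequality~(\ref{ie_no_104_22_main_1}) and $x+2u+v\le 7$ from Inequality~(\ref{ie_no_104_22_main_2}), whence $8x+64u+19v\le 133+26u\le 211$, the contradiction you anticipated.
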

\begin{proof}
  Let $H_0$ be a $9$-plane. From Lemma~\ref{lemma_residual} we conclude $\cK(L)\le 3$ for each line $L$ in $H_0$. Taking into account 
  $\cK(H_i)\in\{9,10,11,14,15,16,19,20,21,22\}$ and that a $22$-plane cannot contain a $1$-line,  
  we compute the following non-zero upper bounds for $\widehat{\eta}_{i,j}$:
  \begin{center}
    \begin{tabular}{|c|c|c|c|}
      \hline
      $i=\cK(L)$ & $j=\tilde{\cK}(\tilde{L})$ & $\widehat{\eta}_{i,j}$ & $\left(\cK(H_0),\dots,\cK(H_5)\right)$ \\
      \hline   
      3 &  3 &  0 & $(9,22,22,22,22,22)$ \\ 
      \hline 
      2 &  8 &  4 & $(9,22,22,22,20,19)$ \\
      \hline
      1 &  8 & 15 & $(9,21,21,21,21,16)$ \\ 
      1 & 13 &  7 & $(9,21,21,20,19,19)$ \\
      \hline 
      0 &  8 & 79 & $(9,22,22,22,20,9)$ \\
      0 & 13 & 34 & $(9,22,21,19,19,14)$ \\     
      0 & 18 & 15 & $(9,19,19,19,19,19)$ \\
      \hline
    \end{tabular}    
  \end{center}
  Denote by $x$ the number of lines $L$ in $H_0$ such that $\cK(L)=1$ and $\tilde{\cK}(\tilde{L})=8$. Similarly, denote by $u$, resp.~$v$, the number of lines 
  $L$ in $H_0$ with $\cK(L)=0$, $\tilde{\cK}(\tilde{L})=8$, resp.~$\cK(L)=0$, $\tilde{\cK}(\tilde{L})=13$.
  
  From Lemma~\ref{lemma_residual} and the non-existence of $(9,2)$-arcs in $\PG(2,5)$ we conclude that the restricted arc $\cK|_{H_0}$ is a $(9,3)$-arc in $\PG(2,5)$. 
  Let $\left(b_i\right)$ be the spectrum of $\cK|_{H_0}$. Given the above enumeration of the possible combinations of $i=\cK(L)$ and  $j=\tilde{\cK}(\tilde{L})$ we obtain 
  $b_{3,3}  = b_3,$, $b_{2,8} = b_2$, $b_{1,8} = x$, $b_{1,13} = b_1-x$, $b_{0,8}  = u$, $b_{0,13} = v$, and $b_{0,18} = b_0-u-v$,   
  so that Inequality~(\ref{ie_no_104_22_main_1}) reads
  \begin{equation}
    \label{eq_104_22_3_5_a9_1}
    b_3\cdot 0+b_2\cdot 4+x\cdot 15+\left(b_1-x\right)\cdot 7+u\cdot 79+v\cdot 34+\left(b_0-u-v\right)\cdot 15\,+\, {13\choose 2} \,\,\ge\,\, 468.
  \end{equation}
  Using $\widetilde{\cK}(\widetilde{H}_0)=3$ Inequality~(\ref{ie_no_104_22_main_2}) gives
  \begin{eqnarray}
    \#\widetilde{\cK} &=&3+b_3\cdot 0+b_2\cdot 5+x\cdot 5+\left(b_1-x\right)\cdot 10 \notag\\ &&+u\cdot 5+v\cdot 10+\left(b_0-u-v\right)\cdot 15\ge 163.\label{eq_104_22_3_5_a9_2}
  \end{eqnarray}
  Plugging in the parameterization from Lemma~\ref{lemma_spectra_9_3_3_5_arcs_partial} into Inequality~(\ref{eq_104_22_3_5_a9_1}) and  
  Inequality~(\ref{eq_104_22_3_5_a9_2}) gives
  \begin{equation}
    \label{eq_104_22_3_5_a9_3}
    8x+64u+19v \ge 177+6b_3\ge 213
  \end{equation}
  and
  $$
    \#\widetilde{\cK} = 198-5x-10u-5v\ge 163,
  $$
  respectively. The latter constraint yields $x+2u+v\le 7$, so that $u\le 3$. Using $x+v\le 7-2u$ we conclude 
  $$
    8x+64u+19v \le 19\cdot (7-2u)+64u=133+26u\le 211
  $$ 
  from $u\le 3$, which contradicts Inequality~(\ref{eq_104_22_3_5_a9_3}).  
\end{proof}

\begin{Lemma}
  \label{lemma_104_22_3_5_a10}
  Let $\cK$ be a $(104,22)$-arc in $\PG(3,5)$. Then $a_{10}=0$.
\end{Lemma}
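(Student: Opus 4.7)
The plan is to mirror the argument used in Lemma~\ref{lemma_104_22_3_5_a6} and Lemma~\ref{lemma_104_22_3_5_a9}. Suppose for contradiction that $H_0$ is a $10$-plane. Then $\widetilde{\cK}(\widetilde{H}_0)=(107-10)\bmod 5=2$ and ${{22-\cK(H_0)}\choose 2}={12\choose 2}=66$. By Lemma~\ref{lemma_residual} every line $L$ in $H_0$ satisfies $\cK(L)\le\lfloor(6+10)/5\rfloor=3$. Combining this with the non-existence of $(10,1)$- and $(10,2)$-arcs in $\PG(2,5)$, the restriction $\cK|_{H_0}$ is a $(10,3)$-arc in $\PG(2,5)$, whose spectrum is controlled by Lemma~\ref{lemma_spectra_10_3_3_5_arcs_partial}, i.e.\ $b_0=16-b_3$, $b_1=3b_3-30$, $b_2=45-3b_3$ with $10\le b_3\le 15$.

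Next, using the previously established exclusions $a_0=a_1=a_4=a_5=a_6=a_9=0$ together with Lemma~\ref{lemma_104_22_3_5_straightforward}~(c), the admissible plane multiplicities for $H_1,\dots,H_5$ lie in $\{10,11,14,15,16,19,20,21,22\}$. For each admissible value of $\cK(L)\in\{0,1,2,3\}$ I would enumerate the quintuples $(\cK(H_1),\dots,\cK(H_5))$ whose sum equals $104+5\cK(L)-10$, respecting also that a $22$-plane contains no $1$-line (Lemma~\ref{lemma_spectra_22_5_3_5_arcs_partial}). This produces a small table of non-zero upper bounds $\widehat\eta_{i,j}$ indexed by $i=\cK(L)$ and the dual-line multiplicity $j=\widetilde{\cK}(\widetilde L)\in\{3,8,13,18\}$, together with the realizing extremal configurations, exactly as in the preceding lemmas.

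With this table in hand I would introduce variables for the unknown counts $b_{i,j}$ of lines with $\cK(L)=i$ and $\widetilde{\cK}(\widetilde L)=j$, subject to $\sum_j b_{i,j}=b_i$. Inequality~(\ref{ie_no_104_22_main_1}) then reads
\begin{equation*}
\sum_{i,j} b_{i,j}\widehat\eta_{i,j} \,+\, 66 \,\ge\, 468,
\end{equation*}
while Inequality~(\ref{ie_no_104_22_main_2}) together with $\widetilde{\cK}(\widetilde H_0)=2$ yields
\begin{equation*}
\#\widetilde{\cK} \,=\, 2 + \sum_{i,j} b_{i,j}(j-2) \,\ge\, 163.
\end{equation*}
After substituting the parameterization of $b_0,b_1,b_2$ from Lemma~\ref{lemma_spectra_10_3_3_5_arcs_partial} and eliminating the $b_{i,j}$ attached to the maximal hyperplane multiplicities (whose contributions to $\widehat\eta_{i,j}$ are tiny or zero), the two inequalities become linear in a few free parameters (the $b_{0,j}$ and $b_{1,j}$ with $j>3$, plus $b_3$). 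The first forces the weighted sum concentrated on the rows with the largest $\widehat\eta_{i,j}$ to be large, while the second forces the total mass on rows with large $j-2$ to stay small; these two demands are incompatible, exactly as in the proofs of Lemmas~\ref{lemma_104_22_3_5_a6} and~\ref{lemma_104_22_3_5_a9}. The main obstacle is purely bookkeeping: one must enumerate the extremal quintuples carefully and check that the linear program over the $b_{i,j}$ is infeasible, but no genuinely new idea is required.
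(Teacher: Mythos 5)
Your proposal follows exactly the paper's argument: restrict to the $10$-plane $H_0$, show $\cK|_{H_0}$ is a $(10,3)$-arc via Lemma~\ref{lemma_residual}, enumerate the extremal quintuples to get the $\widehat{\eta}_{i,j}$ table, and play Inequality~(\ref{ie_no_104_22_main_1}) against Inequality~(\ref{ie_no_104_22_main_2}) using the parameterization of Lemma~\ref{lemma_spectra_10_3_3_5_arcs_partial}. The deferred bookkeeping does go through (the paper obtains $9x+12y+34z\ge 26b_3-158$ versus $x+y+z\le b_3-9$, forcing $b_3\ge 18.5$ against $b_3\le 15$), so this is the same proof in outline; the only cosmetic slip is that $j=18$ cannot actually occur here since $\widetilde{\cK}(\widetilde{H}_0)=2$.
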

\begin{proof}
  Let $H_0$ be a $10$-plane. From Lemma~\ref{lemma_residual} we conclude $\cK(L)\le 3$ for each line $L$ in $H_0$. Looping over all 
  possibilities, while taking into account $$\cK(H_i)\in\{10,11,14,15,16,19,20,21,22\}$$ and that a $22$-plane cannot contain a $1$-line, we compute 
  the following non-zero upper bounds for $\widehat{\eta}_{i,j}$:
  \begin{center}
    \begin{tabular}{|c|c|c|c|}
      \hline
      $i=\cK(L)$ & $j=\tilde{\cK}(\tilde{L})$ & $\widehat{\eta}_{i,j}$ & $\left(\cK(H_0),\dots,\cK(H_5)\right)$ \\
      \hline   
      3 &  3 &  0 & $(10,22,22,22,22,21)$ \\ 
      \hline
      2 &  3 & 15 & $(10,22,22,22,22,16)$ \\   
      2 &  8 &  6 & $(10,22,22,22,19,19)$ \\
      \hline
      1 &  8 & 21 & $(10,21,21,21,21,15)$ \\ 
      1 & 13 &  9 & $(10,21,21,19,19,19)$ \\
      \hline 
      0 &  8 & 69 & $(10,22,22,21,19,10)$ \\     
      0 & 13 & 35 & $(10,22,20,19,19,14)$ \\
      \hline
    \end{tabular}    
  \end{center}

  Denote by $x$ the number of lines $L$ in $H_0$ such that $\cK(L)=2$ and $\tilde{\cK}(\tilde{L})=3$, by $y$ the number of lines $L$ in $H_0$ such that $\cK(L)=1$ and 
  $\tilde{\cK}(\tilde{L})=8$, and by $z$ the number of lines $L$ in $H_0$ such that $\cK(L)=0$ and $\tilde{\cK}(\tilde{L})=8$.
   
  From Lemma~\ref{lemma_residual} and the non-existence of $(10,2)$-arcs in $\PG(2,5)$ we conclude that the restricted arc $\cK|_{H_0}$ is a $(10,3)$-arc in $\PG(2,5)$. 
  Let $\left(b_i\right)$ be the spectrum of $\cK|_{H_0}$. Given the above enumeration of the possible combinations of $i=\cK(L)$ and  $j=\tilde{\cK}(\tilde{L})$ we obtain 
  $b_{3,3}  = b_3$, $b_{2,3} = x$, $b_{2,8}=b_2-x$, $b_{1,8} = y$, $b_{1,13} = b_1-y$, $b_{0,8}  = z$, and $b_{0,13} = b_0-z$, 
  so that Inequality~(\ref{ie_no_104_22_main_1}) reads   
  \begin{equation}
    \label{eq_104_22_3_5_a10_1}
    b_3\cdot 0+x\cdot 15+\left(b_2-x\right)\cdot 6+y\cdot 21+\left(b_1-y\right)\cdot9 +z\cdot 69+\left(b_0-z\right)\cdot 35\,+\, {12\choose 2} \,\,\ge\,\, 468.
  \end{equation}
  Using $\widetilde{\cK}(\widetilde{H}_0)=2$ Inequality~(\ref{ie_no_104_22_main_2}) gives
  \begin{equation}
    \label{eq_104_22_3_5_a10_2}
    \#\widetilde{\cK}=2+b_3\cdot 1+x\cdot 1+\left(b_2-x\right)\cdot 6+y\cdot 6+\left(b_1-y\right)\cdot 11+z\cdot 6+\left(b_0-z\right)\cdot 11\ge 163.
  \end{equation}
  Plugging in the parameterization from Lemma~\ref{lemma_spectra_10_3_3_5_arcs_partial} into Inequality~(\ref{eq_104_22_3_5_a10_1}) and  
  Inequality~(\ref{eq_104_22_3_5_a10_2}) gives
  \begin{equation}
    \label{eq_104_22_3_5_a10_3}
    9x+12y+34z \ge 26b_3-158 
  \end{equation}
  and
  $$
    \#\widetilde{\cK} = 118-5x-5y-5z+5b_3\ge 163,
  $$ 
  respectively. The latter constraint yields $x+y+z\le b_3-9$, so that
  $$
    9x+12y+34z \le 34\cdot\left(b_3-9\right)=34b_3-306.  
  $$
  Thus, we can conclude $34b_3-306\ge 26b_3-158$ from Inequality~(\ref{eq_104_22_3_5_a10_3}), which is equivalent to $b_3\ge 18.5$. 
  Since we have $b_3\le 15$ due to Lemma~\ref{lemma_spectra_10_3_3_5_arcs_partial}, we obtain a contradiction.  
\end{proof}

\begin{Lemma}
  \label{lemma_104_22_3_5_a11}
  Let $\cK$ be a $(104,22)$-arc in $\PG(3,5)$. Then $a_{11}=0$.
\end{Lemma}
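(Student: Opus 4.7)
The plan is to follow the same scheme that was executed for Lemmas~\ref{lemma_104_22_3_5_a9} and~\ref{lemma_104_22_3_5_a10}: assume there is a plane $H_0$ with $\cK(H_0)=11$, pass to the induced arc $\cK|_{H_0}$, tabulate upper bounds $\widehat{\eta}_{i,j}$ of the quantity in~(\ref{eq_eta_i_j}), and then derive a contradiction between Inequality~(\ref{ie_no_104_22_main_1}) and Inequality~(\ref{ie_no_104_22_main_2}). By Lemma~\ref{lemma_residual} every line $L\subseteq H_0$ satisfies $\cK(L)\le \lfloor (5\cdot 22+11-104)/5\rfloor=3$. Since an $(11,2)$-arc in $\PG(2,5)$ cannot exist (the line-sum would be at most $2\cdot 31=62$ whereas it must equal $11\cdot 6=66$), the restriction $\cK|_{H_0}$ is genuinely an $(11,3)$-arc, and its spectrum is parameterized by $b_3\in\{15,16,17,18\}$ via Lemma~\ref{lemma_spectra_11_3_3_5_arcs_partial}.

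Next I would tabulate $\widehat{\eta}_{i,j}$ by enumerating all tuples $(\cK(H_1),\dots,\cK(H_5))$ whose entries lie in the set of still-available plane multiplicities $\{11,14,15,16,19,20,21,22\}$ (in view of Lemma~\ref{lemma_104_22_3_5_straightforward}(c) and Lemmas~\ref{lemma_104_22_3_5_a0}--\ref{lemma_104_22_3_5_a10}), which sum to $93+5\cK(L)$, and which are consistent with the constraint from Lemma~\ref{lemma_spectra_22_5_3_5_arcs_partial} that a $22$-plane contains no $1$-line. This yields nonzero entries only for $(i,j)\in\{(3,3),(2,3),(2,8),(1,8),(1,13),(0,3),(0,8),(0,13)\}$, and in each case the maximum of $\sum_{h=1}^{5}\binom{22-\cK(H_h)}{2}$ is attained at a small number of explicit tuples analogous to those listed in the earlier lemmas.

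To close the argument, introduce the counting unknowns $x_2=b_{2,3}$, $y_1=b_{1,8}$, $z_3=b_{0,3}$, $z_8=b_{0,8}$; the remaining splits $b_{2,8}=b_2-x_2$, $b_{1,13}=b_1-y_1$, $b_{0,13}=b_0-z_3-z_8$ are then determined. Using $\widetilde{\cK}(\widetilde{H}_0)=1$ and eliminating $b_0,b_1,b_2$ by Lemma~\ref{lemma_spectra_11_3_3_5_arcs_partial}, Inequality~(\ref{ie_no_104_22_main_1}) becomes a lower bound of the form $15x_2+18y_1+24z_3+24z_8\ge 24b_3-c$ for an explicit constant $c$, while Inequality~(\ref{ie_no_104_22_main_2}) gives the upper bound $x_2+y_1+2z_3+z_8\le b_3-13$. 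Because $\max\{15,18,24,24/2\}=24$, the first quantity is bounded above by $24$ times the second, so the two inequalities force $24(b_3-13)\ge 24b_3-c$, i.e.\ $c\ge 312$, which will fail numerically; alternatively one checks the four residual cases $b_3\in\{15,16,17,18\}$ separately against the additional bounds $x_2\le b_2$, $y_1\le b_1$, $z_3+z_8\le b_0$.

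The only delicate step is the enumeration underlying the table of $\widehat{\eta}_{i,j}$, which is purely combinatorial bookkeeping over an $8$-element multiplicity set; once it is in hand the linear-programming style manipulation is routine and strictly parallel to the proofs of the preceding lemmas.
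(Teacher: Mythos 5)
Your proposal follows exactly the paper's argument: the same $\widehat{\eta}_{i,j}$ table over the same eight pairs $(i,j)$, the same splitting variables for $b_{2,3}$, $b_{1,8}$, $b_{0,3}$, $b_{0,8}$, and the same linear-programming elimination pitting Inequality~(\ref{ie_no_104_22_main_1}) against Inequality~(\ref{ie_no_104_22_main_2}) via Lemma~\ref{lemma_spectra_11_3_3_5_arcs_partial}; the paper's actual constants ($15x+18y+33u+24v\ge 24b_3-217$ versus $x+y+2u+v\le b_3-13$) confirm your predicted contradiction, since $24(b_3-13)=24b_3-312<24b_3-217$. The only blemish is the coefficient $24z_3$ where the correct value is $33$, which is harmless because $33\le 2\cdot 24$ so your bounding step still goes through.
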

\begin{proof}
  Let $H_0$ be a $11$-plane. From Lemma~\ref{lemma_residual} we conclude $\cK(L)\le 3$ for each line $L$ in $H_0$. Looping over all 
  possibilities, while taking into account $$\cK(H_i)\in\{11,14,15,16,19,20,21,22\}$$ and that a $22$-plane cannot contain a $1$-line, we compute 
  the following non-zero upper bounds for $\widehat{\eta}_{i,j}$:
  \begin{center}
    \begin{tabular}{|c|c|c|c|}
      \hline
      $i=\cK(L)$ & $j=\tilde{\cK}(\tilde{L})$ & $\widehat{\eta}_{i,j}$ & $\left(\cK(H_0),\dots,\cK(H_5)\right)$ \\
      \hline   
      3 &  3 &  1 & $(11,22,22,22,22,20)$ \\ 
      \hline
      2 &  3 & 21 & $(11,22,22,22,22,15)$ \\   
      2 &  8 &  6 & $(11,22,22,21,19,19)$ \\
      \hline
      1 &  8 & 28 & $(11,21,21,21,21,14)$ \\ 
      1 & 13 & 10 & $(11,21,20,19,19,19)$ \\
      \hline 
      0 &  3 & 70 & $(11,22,22,22,16,11)$ \\
      0 &  8 & 61 & $(11,22,22,19,19,11)$ \\     
      0 & 13 & 37 & $(11,22,19,19,19,14)$ \\
      \hline
    \end{tabular}    
  \end{center}

  Denote by $x$ the number of lines $L$ in $H_0$ such that $\cK(L)=2$ and $\tilde{\cK}(\tilde{L})=3$, by $y$ the number of lines $L$ in $H_0$ such that $\cK(L)=1$ and 
  $\tilde{\cK}(\tilde{L})=8$, by $u$ the number of lines $L$ in $H_0$ such that $\cK(L)=0$ and $\tilde{\cK}(\tilde{L})=3$, and by $v$ the number of lines $L$ in $H_0$ 
  such that $\cK(L)=0$ and $\tilde{\cK}(\tilde{L})=8$.
   
  From Lemma~\ref{lemma_residual} and the non-existence of $(11,2)$-arcs in $\PG(2,5)$ we conclude that the restricted arc $\cK|_{H_0}$ is a $(11,3)$-arc in $\PG(2,5)$. 
  Let $\left(b_i\right)$ be the spectrum of $\cK|_{H_0}$. Given the above enumeration of the possible combinations of $i=\cK(L)$ and  $j=\tilde{\cK}(\tilde{L})$ we obtain 
  $b_{3,3}  = b_3$, $b_{2,3} = x$, $b_{2,8}=b_2-x$, $b_{1,8} = y$, $b_{1,13} = b_1-y$, $b_{0,3}=u$, $b_{0,8}  = v$, and $b_{0,13} = b_0-u-v$,
  so that Inequality~(\ref{ie_no_104_22_main_1}) reads   
  \begin{eqnarray}
    &&b_3\cdot 1+x\cdot 21+\left(b_2-x\right)\cdot 6+y\cdot 28+\left(b_1-y\right)\cdot 10 \notag\\ 
    &&+u\cdot 70+ v\cdot 61+\left(b_0-u-v\right)\cdot 37\,+\, {11\choose 2} \,\,\ge\,\, 468.\label{eq_104_22_3_5_a11_1}
  \end{eqnarray}
  Using $\widetilde{\cK}(\widetilde{H}_0)=1$ Inequality~(\ref{ie_no_104_22_main_2}) gives
  \begin{eqnarray}
    \#\widetilde{\cK}&=&1+b_3\cdot 2+x\cdot 2+\left(b_2-x\right)\cdot 7+y\cdot 7+\left(b_1-y\right)\cdot 12\notag\\ &&+u\cdot 2+v\cdot 7+\left(b_0-u-v\right)\cdot 12\ge 163.\label{eq_104_22_3_5_a11_2}
  \end{eqnarray}
  Plugging in the parameterization from Lemma~\ref{lemma_spectra_11_3_3_5_arcs_partial} into Inequality~(\ref{eq_104_22_3_5_a11_1}) and  
  Inequality~(\ref{eq_104_22_3_5_a11_2}) gives
  \begin{equation}
    \label{eq_104_22_3_5_a11_3}
    15x  + 18y + 33u + 24v \ge 24b_3-217 
  \end{equation}
  and
  $$
    \#\widetilde{\cK} = 98 - 5x - 5y - 10u - 5v+5b_3\ge 163,
  $$ 
  respectively. The latter constraint yields $x+y+2u+v\le b_3-13$, so that 
  $x+y+v\le b_3-13-2u$ and 
  $$
   15x  + 18y + 33u + 24v \le 33u +24\left(b_3-13-2u\right)=24b_3-15u-312.  
  $$
 Thus, we can conclude $24b_3-15u-312\ge 24b_3-217$ from Inequality~(\ref{eq_104_22_3_5_a11_3}), which is equivalent to $u\le -\frac{19}{3}$ contradicting $u\ge 0$.  
\end{proof}

\begin{Lemma}
  \label{lemma_104_22_3_5_a22}
  Let $\cK$ be a $(104,22)$-arc in $\PG(3,5)$. Then $a_{22}=0$.
\end{Lemma}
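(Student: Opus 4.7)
The plan is to follow the template established in Lemmas~\ref{lemma_104_22_3_5_a0}--\ref{lemma_104_22_3_5_a11}: assume there is a $22$-plane $H_0$, compute the relevant $\widehat{\eta}_{i,j}$ values for lines in $H_0$, and show that Inequalities~(\ref{ie_no_104_22_main_1}) and (\ref{ie_no_104_22_main_2}) are jointly unsatisfiable when combined with the spectrum of $\cK|_{H_0}$. First I would exploit that at this stage $a_0=a_1=a_4=a_5=a_6=a_9=a_{10}=a_{11}=0$, so the only possible plane multiplicities are $\{14,15,16,19,20,21,22\}$, giving $\tilde{\cK}(\tilde H)\in\{0,1,2,3\}$ via the correspondence $22-\cK(H)\bmod 5$. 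By Lemma~\ref{lemma_residual} lines in $H_0$ have multiplicity at most $5$, and Lemma~\ref{lemma_spectra_22_5_3_5_arcs_partial} yields a parameterized spectrum $\cK|_{H_0}$ as a $(22,5)$-arc (with $a_1=0$) in terms of $a_0\le 1$ and $a_2\le 4$.

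Next I would enumerate, for each admissible $\cK(L)\in\{0,2,3,4,5\}$, the configurations of $(\cK(H_1),\dots,\cK(H_5))$ summing to $82+5\cK(L)$ whose multiplicities lie in $\{14,15,16,19,20,21,22\}$, grouping them by $\tilde{\cK}(\tilde L)\in\{3,8,13\}$ (with only $(0,3)$ being impossible since non-$22$ planes contribute at least $1$ to $\tilde{\cK}$). Maximizing $\sum_{i=1}^{5}\binom{22-\cK(H_i)}{2}$ in each class yields values such as
\[
 \widehat{\eta}_{5,3}=3,\ \widehat{\eta}_{4,3}=28,\ \widehat{\eta}_{4,8}=7,\ \widehat{\eta}_{3,3}=36,\ \widehat{\eta}_{3,8}=32,\ \widehat{\eta}_{3,13}=12,
\]
\[
 \widehat{\eta}_{2,3}=45,\ \widehat{\eta}_{2,8}=57,\ \widehat{\eta}_{2,13}=37,\ \widehat{\eta}_{0,8}=86,\ \widehat{\eta}_{0,13}=87,
\]
where for example $\widehat{\eta}_{0,13}=87$ is realized by $(H_1,\dots,H_5)=(14,14,14,19,21)$ and $\widehat{\eta}_{2,8}=57$ by $(22,22,14,14,20)$.

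Then I would introduce the split variables $y_4=b_{4,8}$, $y_{3,8},y_{3,13}$, $y_{2,8},y_{2,13}$, $y_{0,13}$, together with $b_{5,3}=a_5$, and express
\[
 \sum_{i,j} b_{i,j}\widehat{\eta}_{i,j}=447+128a_0+18a_2-21y_4-4y_{3,8}-24y_{3,13}+12y_{2,8}-8y_{2,13}+y_{0,13},
\]
\[
 \#\widetilde{\cK}=93+5a_0+5\bigl(y_4+y_{3,8}+2y_{3,13}+y_{2,8}+2y_{2,13}+y_{0,13}\bigr),
\]
after plugging the parameterization $a_3=13-10a_0-3a_2$, $a_4=-3+15a_0+3a_2$, $a_5=21-6a_0-a_2$ into the sum. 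Inequality~(\ref{ie_no_104_22_main_1}) with $\cK(H_0)=22$ becomes the constraint that this first linear expression is at least $468$, while Lemma~\ref{lemma_lower_bound_tilde_k} forces $Y:=y_4+y_{3,8}+2y_{3,13}+y_{2,8}+2y_{2,13}+y_{0,13}\ge 14-a_0$.

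Finally I would run through the six admissible pairs $(a_0,a_2)\in\{(0,1),(0,2),(0,3),(0,4),(1,0),(1,1)\}$, showing that the cost budget imposed by the first inequality is too tight to allow $Y$ to reach $14-a_0$, given the caps $y_4\le a_4$, $y_{3,8}+y_{3,13}\le a_3$, $y_{2,8}+y_{2,13}\le a_2$, $y_{0,13}\le a_0$. For instance in the case $a_0=1,a_2=0$ (hence $a_3=3,a_4=12$) the cost constraint reads $21y_4+4y_{3,8}+24y_{3,13}-y_{0,13}\le 107$ and the $Y$-optimization yields at most $Y=8<13$; the remaining cases collapse identically, the critical role being played by the large penalty ($-21$) on $y_4$ together with the strict upper bound $a_0\le 1$. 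The main obstacle is the careful enumeration and bookkeeping of the $\widehat{\eta}_{i,j}$ values: every residual inaccuracy there propagates into the linear system and risks being too loose to close the case, but the resulting LP is small and solvable by inspection once the table is in place.
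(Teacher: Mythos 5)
Your proposal is correct and takes essentially the same route as the paper: the identical table of non-zero bounds $\widehat{\eta}_{i,j}$ (including the extremal configurations realizing $\widehat{\eta}_{0,13}=87$ and $\widehat{\eta}_{2,8}=57$), the same two master Inequalities~(\ref{ie_no_104_22_main_1}) and~(\ref{ie_no_104_22_main_2}) specialized to $\cK(H_0)=22$, and the same parameterized $(22,5)$-arc spectrum from Lemma~\ref{lemma_spectra_22_5_3_5_arcs_partial}. The only (cosmetic) difference is in how the resulting small integer program is discharged: you run casework over the six admissible pairs $(b_0,b_2)$ with slack variables measured from the minimal $j$ in each class, whereas the paper eliminates $x+v+z$ via the $\#\widetilde{\cK}$-constraint and reaches the single uniform contradiction $189+22s\ge 270+31b_0+2b_2+18(u+y)$ against $s\le b_0\le 1$.
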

\begin{proof}
  Let $H_0$ be a $22$-plane. Looping over all possibilities, while taking into account $$\cK(H_i)\in\{14,15,16,19,20,21,22\}$$ and that a $22$-plane cannot contain a $1$-line, we compute 
  the following non-zero upper bounds for $\widehat{\eta}_{i,j}$:
  \begin{center}
    \begin{tabular}{|c|c|c|c|c|}
      \hline
      $b_{i,j}$ & $i=\cK(L)$ & $j=\tilde{\cK}(\tilde{L})$ & $\widehat{\eta}_{i,j}$ & $\left(\cK(H_0),\dots,\cK(H_5)\right)$ \\
      \hline
      $b_5$     & 5 &  3 &   3 & $(22,22,22,22,22,19)$ \\ 
      \hline  
      $x$       & 4 &  3 &  28 & $(22,22,22,22,22,14)$ \\
      $b_4-x$   & 4 &  8 &   7 & $(22,22,22,20,19,19)$ \\ 
      \hline   
      $u$       & 3 &  3 &  36 & $(22,22,22,22,16,15)$ \\
      $v$       & 3 &  8 &  32 & $(22,22,22,20,19,14)$ \\
      $b_3-u-v$ & 3 & 13 &  12 & $(22,21,19,19,19,19)$ \\
      \hline
      $y$       & 2 &  3 &  45 & $(22,22,22,16,16,16)$ \\
      $z$ & 2   &  8 &  57 & $(22,22,22,20,14,14)$ \\
      $b_2-y-z$ & 2 & 13 &  37 & $(22,21,19,19,19,14)$ \\ 
      \hline
      $b_0-s$       & 0 &  8 &  86 & $(22,22,16,16,14,14)$ \\     
      $s$   & 0 & 13 &  87 & $(22,21,19,14,14,14)$ \\
      \hline
    \end{tabular}    
  \end{center}

  From the non-existence of a $(22,4)$-arc in $\PG(2,5)$ we conclude that $\cK|_{H_0}$ is a $(22,5)$-arc in $PG(2,5)$. Let $\left(b_i\right)$ be the spectrum of $\cK|_{H_0}$. 
  Using the non-negative integer variables $x$, $y$, $z$, $u$, $v$, and $s$, we express the counts $b_{i,j}$ of the number of lines $L$ in $H_0$ such that 
  $\cK(L)=i$ and $\tilde{\cK}(\tilde{L})=j$, see the above table. With this, Inequality~(\ref{ie_no_104_22_main_1}) reads   
  \begin{eqnarray}
    b_5\cdot 3+x\cdot 28+\left(b_4-x\right)\cdot 7+u\cdot 36+v\cdot 32+\left(b_3-u-v\right)\cdot 12\notag\\ 
     +y\cdot 45+z\cdot 57+\left(b_2-y-z\right)\cdot 37+s\cdot 87+\left(b_0-s\right)\cdot 86
    \,+\, {0\choose 2} &\ge& 468.\label{eq_104_22_3_5_a22_1}
  \end{eqnarray}
  Using $\widetilde{\cK}(\widetilde{H}_0)=0$ Inequality~(\ref{ie_no_104_22_main_2}) gives
  \begin{eqnarray}
     \#\widetilde{\cK}&=&3\left(b_5+x+u+y\right)+8\left(b_4-x+v+z+b_0-s\right)\notag\\ &&+13\left(b_3-u-v+b_2-y-z+s\right)\ge 163.\label{eq_104_22_3_5_a22_2}
  \end{eqnarray}
  Plugging in the parameterization from Lemma~\ref{lemma_spectra_22_5_3_5_arcs_partial} into Inequality~(\ref{eq_104_22_3_5_a22_1}) and  
  Inequality~(\ref{eq_104_22_3_5_a22_2}) gives
  \begin{equation}
    \label{eq_104_22_3_5_a22_3}
      21x + 24u + 20v + 8y + 20z + s \ge 270 -53b_0 - 19b_2 
  \end{equation}
  and
  $$
    \#\widetilde{\cK} = 208 - 20b_0 - 5b_2 - 5x - 10u - 10y - 5v - 5z + 5s\ge 163 ,
  $$ 
  respectively. The latter constraint yields 
  $x+v+z +2u +2y  -s  \le 9 - 4b_0 - b_2$, so that $x+v+z \le 9 - 4b_0 - b_2 -2(u+y)+s$ and  
  \begin{eqnarray*}
    21x + 24u + 20v + 8y + 20z + s &\!\le\!& 21(9 - 4b_0 - b_2 -2(u\!+\!y)+s) +24(\!u+\!y)+s \\ 
    &\!=\!& 189 -84b_0-21b_2-18(u\!+\!y)+22s.
  \end{eqnarray*}
  Thus, we can conclude 
  $$
    189 -84b_0-21b_2-18(u+y)+22s \,\,\ge\,\, 270 -53b_0 - 19b_2
  $$  
  from Inequality~(\ref{eq_104_22_3_5_a22_3}), which is equivalent to
  $$
    189 +22s \,\,\ge\,\, 270 +31b_0 +2b_2+18(u+y).
  $$
  This contradicts $s\le b_0\le 1$.
\end{proof}
As a direct implication we can conclude Theorem~\ref{thm_no_104_22}. While the arguments look rather technical and lengthy, when spelled out in full, 
they actually are just an application of the linear programming method applied to Inequality~(\ref{ie_no_104_22_main_1}) and Inequality~(\ref{ie_no_104_22_main_2}).  

\section*{Acknowledgments}
The authors would like to thank the \emph{High Performance Computing group} of the University of Bayreuth for providing the excellent
computing cluster and especially Bernhard Winkler for his support. The research of the second author is supported by
the Bulgarian National Science Research Fund under Contract KP-06-N32/2 - 2019. The research of the third author was supported by the Research Fund of
Sofia University under contract No 80-10-88/25.03.2021.


\newpage

\textbf{Appendix}

\begin{table}[hp!]
\setlength{\tabcolsep}{0.3mm}
\begin{center}
\begin{tabular}{|c|c|c|c|c|c|c|c|c|c|c|c|c|c|c|c|c||c|c|c|c||c|}
\hline
$A_1$ & $A_2$ & $A_3$ & $B_1$ & $B_2$ & $B_3$ & $B_4$ & $B_5$ & $B_6$ & $B_7$ & $B_8$ & $C_1$ & $C_2$ & $C_3$ & $C_4$ & $C_5$ & $D_1$ & $\lambda_0$ & $\lambda_1$ & $\lambda_2$ & $\lambda_3$ & $\#$\\
\hline
0 & 12 & 16 & 0 & 0 & 0 & 0 & 0 & 0 & 0 & 3 & 0 & 0 & 0 & 0 & 0 & 0 & 16 & 12 & 3 & 0 & 1\\
3 & 0 & 25 & 0 & 0 & 0 & 0 & 3 & 0 & 0 & 0 & 0 & 0 & 0 & 0 & 0 & 0 & 15 & 15 & 0 & 1 & 1\\
4 & 25 & 0 & 0 & 0 & 0 & 0 & 1 & 0 & 0 & 0 & 0 & 0 & 0 & 0 & 1 & 0 & 20 & 5 & 5 & 1 & 1\\
30 & 0 & 0 & 0 & 0 & 0 & 0 & 0 & 0 & 0 & 0 & 0 & 0 & 0 & 0 & 0 & 1 & 25 & 0 & 0 & 6 & 1\\
\hline
\end{tabular}
\caption{Strong $(3\mod 5)$-arcs in $\PG(2,5)$ of cardinality $18$.}
\label{tab_strong_3_mod_5_arcs_pg_2_5_n_18}
\end{center}
\end{table}

\begin{table}[hp!]
\setlength{\tabcolsep}{0.3mm}
\begin{center}
\begin{tabular}{|c|c|c|c|c|c|c|c|c|c|c|c|c|c|c|c|c||c|c|c|c||c|}
\hline
$A_1$ & $A_2$ & $A_3$ & $B_1$ & $B_2$ & $B_3$ & $B_4$ & $B_5$ & $B_6$ & $B_7$ & $B_8$ & $C_1$ & $C_2$ & $C_3$ & $C_4$ & $C_5$ & $D_1$ & $\lambda_0$ & $\lambda_1$ & $\lambda_2$ & $\lambda_3$ & $\#$\\
\hline
6 & 12 & 4 & 0 & 3 & 6 & 0 & 0 & 0 & 0 & 0 & 0 & 0 & 0 & 0 & 0 & 0 & 18 & 6 & 4 & 3 & 1\\
\hline
\end{tabular}
\caption{Strong $(3\mod 5)$-arcs in $\PG(2,5)$ of cardinality $23$.}
\label{tab_strong_3_mod_5_arcs_pg_2_5_n_23}
\end{center}
\end{table}

\begin{table}[htp]
\setlength{\tabcolsep}{0.3mm}
\begin{center}
\begin{tabular}{|c|c|c|c|c|c|c|c|c|c|c|c|c|c|c|c|c||c|c|c|c||c|}
\hline
$A_1$ & $A_2$ & $A_3$ & $B_1$ & $B_2$ & $B_3$ & $B_4$ & $B_5$ & $B_6$ & $B_7$ & $B_8$ & $C_1$ & $C_2$ & $C_3$ & $C_4$ & $C_5$ & $D_1$ & $\lambda_0$ & $\lambda_1$ & $\lambda_2$ & $\lambda_3$ & $\#$\\
\hline
6 & 0 & 10 & 0 & 15 & 0 & 0 & 0 & 0 & 0 & 0 & 0 & 0 & 0 & 0 & 0 & 0 & 15 & 10 & 0 & 6 & 1\\
\hline
\end{tabular}
\caption{Strong $(3\mod 5)$-arcs in $\PG(2,5)$ of cardinality $28$.}
\label{tab_strong_3_mod_5_arcs_pg_2_5_n_28}
\end{center}
\end{table}

\begin{table}[htp]
\setlength{\tabcolsep}{0.3mm}
\begin{center}
\begin{tabular}{|c|c|c|c|c|c|c|c|c|c|c|c|c|c|c|c|c||c|c|c|c||c|}
\hline
$A_1$ & $A_2$ & $A_3$ & $B_1$ & $B_2$ & $B_3$ & $B_4$ & $B_5$ & $B_6$ & $B_7$ & $B_8$ & $C_1$ & $C_2$ & $C_3$ & $C_4$ & $C_5$ & $D_1$ & $\lambda_0$ & $\lambda_1$ & $\lambda_2$ & $\lambda_3$ & $\#$\\
\hline
0 & 0 & 10 & 0 & 15 & 0 & 0 & 6 & 0 & 0 & 0 & 0 & 0 & 0 & 0 & 0 & 0 & 10 & 15 & 0 & 6 & 1\\
0 & 3 & 7 & 2 & 8 & 2 & 7 & 1 & 0 & 0 & 1 & 0 & 0 & 0 & 0 & 0 & 0 & 11 & 12 & 3 & 5 & 1\\
0 & 6 & 4 & 0 & 6 & 12 & 0 & 0 & 0 & 0 & 3 & 0 & 0 & 0 & 0 & 0 & 0 & 12 & 9 & 6 & 4 & 1\\
0 & 6 & 4 & 2 & 4 & 8 & 4 & 0 & 0 & 2 & 1 & 0 & 0 & 0 & 0 & 0 & 0 & 12 & 9 & 6 & 4 & 2\\
0 & 6 & 4 & 3 & 3 & 6 & 6 & 0 & 0 & 3 & 0 & 0 & 0 & 0 & 0 & 0 & 0 & 12 & 9 & 6 & 4 & 1\\
0 & 9 & 1 & 3 & 0 & 9 & 3 & 0 & 3 & 3 & 0 & 0 & 0 & 0 & 0 & 0 & 0 & 13 & 6 & 9 & 3 & 1\\
2 & 8 & 1 & 8 & 6 & 4 & 0 & 0 & 0 & 1 & 0 & 0 & 0 & 0 & 1 & 0 & 0 & 15 & 5 & 5 & 6 & 1\\
4 & 5 & 2 & 5 & 4 & 10 & 0 & 0 & 0 & 0 & 0 & 1 & 0 & 0 & 0 & 0 & 0 & 15 & 5 & 5 & 6 & 1\\
8 & 4 & 0 & 16 & 0 & 0 & 0 & 0 & 1 & 0 & 0 & 2 & 0 & 0 & 0 & 0 & 0 & 18 & 1 & 4 & 8 & 1\\
\hline
\end{tabular}
\caption{Strong $(3\mod 5)$-arcs in $\PG(2,5)$ of cardinality $33$.}
\label{tab_strong_3_mod_5_arcs_pg_2_5_n_33}
\end{center}
\end{table}

\begin{table}[htp]
\setlength{\tabcolsep}{0.3mm}
\begin{center}
\begin{tabular}{|c|c|c|c|c|c|c|c|c|c|c|c|c|c|c|c|c||c|c|c|c||c|}
\hline
$A_1$ & $A_2$ & $A_3$ & $B_1$ & $B_2$ & $B_3$ & $B_4$ & $B_5$ & $B_6$ & $B_7$ & $B_8$ & $C_1$ & $C_2$ & $C_3$ & $C_4$ & $C_5$ & $D_1$ & $\lambda_0$ & $\lambda_1$ & $\lambda_2$ & $\lambda_3$ & $\#$\\
\hline
0 & 0 & 4 & 0 & 0 & 0 & 0 & 0 & 3 & 18 & 6 & 0 & 0 & 0 & 0 & 0 & 0 & 6 & 12 & 13 & 0 & 1\\
0 & 0 & 5 & 0 & 0 & 6 & 12 & 2 & 0 & 2 & 3 & 1 & 0 & 0 & 0 & 0 & 0 & 7 & 14 & 6 & 4 & 1\\
0 & 1 & 4 & 0 & 0 & 10 & 4 & 1 & 0 & 8 & 2 & 0 & 0 & 1 & 0 & 0 & 0 & 8 & 11 & 9 & 3 & 1\\
0 & 1 & 4 & 0 & 0 & 9 & 6 & 0 & 1 & 6 & 3 & 0 & 0 & 1 & 0 & 0 & 0 & 8 & 11 & 9 & 3 & 1\\
0 & 2 & 3 & 0 & 0 & 6 & 9 & 0 & 1 & 7 & 2 & 0 & 1 & 0 & 0 & 0 & 0 & 8 & 11 & 9 & 3 & 1\\
0 & 2 & 4 & 0 & 12 & 0 & 8 & 3 & 0 & 0 & 0 & 1 & 1 & 0 & 0 & 0 & 0 & 9 & 13 & 2 & 7 & 1\\
0 & 2 & 4 & 4 & 5 & 4 & 8 & 0 & 0 & 2 & 0 & 0 & 0 & 2 & 0 & 0 & 0 & 10 & 10 & 5 & 6 & 2\\
0 & 3 & 2 & 0 & 0 & 8 & 2 & 0 & 4 & 10 & 1 & 0 & 0 & 0 & 1 & 0 & 0 & 9 & 8 & 12 & 2 & 1\\
0 & 3 & 3 & 2 & 6 & 6 & 8 & 0 & 0 & 0 & 1 & 1 & 0 & 0 & 1 & 0 & 0 & 10 & 10 & 5 & 6 & 1\\
0 & 4 & 2 & 4 & 2 & 10 & 3 & 0 & 1 & 3 & 0 & 0 & 0 & 1 & 1 & 0 & 0 & 11 & 7 & 8 & 5 & 1\\
0 & 5 & 0 & 0 & 0 & 5 & 0 & 0 & 10 & 10 & 0 & 0 & 0 & 0 & 0 & 1 & 0 & 10 & 5 & 15 & 1 & 1\\
0 & 5 & 1 & 2 & 4 & 12 & 0 & 1 & 0 & 4 & 0 & 0 & 1 & 0 & 1 & 0 & 0 & 11 & 7 & 8 & 5 & 1\\
0 & 5 & 1 & 3 & 3 & 9 & 4 & 0 & 1 & 3 & 0 & 0 & 1 & 0 & 1 & 0 & 0 & 11 & 7 & 8 & 5 & 1\\
0 & 6 & 0 & 4 & 0 & 12 & 0 & 0 & 6 & 0 & 1 & 0 & 0 & 0 & 2 & 0 & 0 & 12 & 4 & 11 & 4 & 1\\
1 & 1 & 4 & 2 & 4 & 7 & 9 & 0 & 0 & 1 & 0 & 1 & 0 & 1 & 0 & 0 & 0 & 10 & 10 & 5 & 6 & 1\\
1 & 2 & 3 & 3 & 1 & 13 & 2 & 0 & 1 & 3 & 0 & 0 & 0 & 2 & 0 & 0 & 0 & 11 & 7 & 8 & 5 & 1\\
1 & 3 & 2 & 2 & 1 & 13 & 4 & 0 & 1 & 2 & 0 & 1 & 0 & 0 & 1 & 0 & 0 & 11 & 7 & 8 & 5 & 1\\
1 & 4 & 1 & 0 & 4 & 14 & 0 & 1 & 0 & 4 & 0 & 0 & 2 & 0 & 0 & 0 & 0 & 11 & 7 & 8 & 5 & 1\\
1 & 4 & 1 & 1 & 3 & 11 & 4 & 0 & 1 & 3 & 0 & 0 & 2 & 0 & 0 & 0 & 0 & 11 & 7 & 8 & 5 & 1\\
2 & 5 & 0 & 10 & 2 & 7 & 0 & 0 & 2 & 0 & 0 & 0 & 2 & 1 & 0 & 0 & 0 & 14 & 3 & 7 & 7 & 2\\
3 & 0 & 4 & 3 & 15 & 0 & 3 & 0 & 0 & 0 & 0 & 3 & 0 & 0 & 0 & 0 & 0 & 12 & 9 & 1 & 9 & 1\\
\hline
\end{tabular}
\caption{Strong $(3\mod 5)$-arcs in $\PG(2,5)$ of cardinality $38$.}
\label{tab_strong_3_mod_5_arcs_pg_2_5_n_38}
\end{center}
\end{table}

\begin{table}[htp]
\setlength{\tabcolsep}{0.3mm}
\begin{center}
\begin{tabular}{|c|c|c|c|c|c|c|c|c|c|c|c|c|c|c|c|c||c|c|c|c||c|}
\hline
$A_1$ & $A_2$ & $A_3$ & $B_1$ & $B_2$ & $B_3$ & $B_4$ & $B_5$ & $B_6$ & $B_7$ & $B_8$ & $C_1$ & $C_2$ & $C_3$ & $C_4$ & $C_5$ & $D_1$ & $\lambda_0$ & $\lambda_1$ & $\lambda_2$ & $\lambda_3$ & $\#$\\
\hline
0 & 0 & 0 & 0 & 0 & 0 & 0 & 30 & 0 & 0 & 0 & 0 & 0 & 0 & 0 & 0 & 1 & 0 & 25 & 0 & 6 & 1\\
0 & 0 & 0 & 0 & 0 & 0 & 0 & 4 & 0 & 0 & 25 & 0 & 0 & 0 & 0 & 2 & 0 & 0 & 20 & 10 & 1 & 1\\
0 & 0 & 1 & 0 & 0 & 0 & 9 & 3 & 0 & 6 & 9 & 0 & 0 & 0 & 3 & 0 & 0 & 3 & 16 & 9 & 3 & 1\\
0 & 0 & 2 & 0 & 2 & 7 & 8 & 1 & 0 & 4 & 3 & 0 & 0 & 2 & 2 & 0 & 0 & 6 & 12 & 8 & 5 & 2\\
0 & 0 & 2 & 0 & 3 & 1 & 13 & 4 & 0 & 2 & 2 & 0 & 1 & 3 & 0 & 0 & 0 & 5 & 15 & 5 & 6 & 1\\
0 & 0 & 3 & 2 & 8 & 5 & 6 & 1 & 0 & 0 & 1 & 1 & 0 & 4 & 0 & 0 & 0 & 8 & 11 & 4 & 8 & 1\\
0 & 0 & 3 & 4 & 6 & 0 & 12 & 0 & 1 & 0 & 0 & 1 & 0 & 4 & 0 & 0 & 0 & 8 & 11 & 4 & 8 & 1\\
0 & 1 & 0 & 0 & 0 & 0 & 8 & 0 & 0 & 12 & 7 & 0 & 0 & 0 & 1 & 2 & 0 & 4 & 13 & 12 & 2 & 1\\
0 & 1 & 1 & 0 & 2 & 3 & 13 & 0 & 1 & 3 & 3 & 0 & 1 & 1 & 2 & 0 & 0 & 6 & 12 & 8 & 5 & 1\\
0 & 1 & 1 & 0 & 2 & 4 & 11 & 1 & 0 & 5 & 2 & 0 & 1 & 1 & 2 & 0 & 0 & 6 & 12 & 8 & 5 & 2\\
0 & 1 & 1 & 0 & 2 & 8 & 4 & 0 & 2 & 7 & 2 & 0 & 0 & 0 & 4 & 0 & 0 & 7 & 9 & 11 & 4 & 2\\
0 & 1 & 2 & 1 & 9 & 4 & 7 & 1 & 0 & 0 & 1 & 1 & 1 & 3 & 0 & 0 & 0 & 8 & 11 & 4 & 8 & 1\\
0 & 1 & 2 & 6 & 0 & 12 & 0 & 0 & 3 & 2 & 0 & 0 & 0 & 2 & 3 & 0 & 0 & 10 & 5 & 10 & 6 & 1\\
0 & 2 & 0 & 0 & 0 & 12 & 0 & 0 & 4 & 8 & 1 & 0 & 0 & 1 & 0 & 3 & 0 & 8 & 6 & 14 & 3 & 1\\
0 & 2 & 0 & 0 & 1 & 7 & 7 & 0 & 1 & 8 & 1 & 0 & 1 & 0 & 2 & 1 & 0 & 7 & 9 & 11 & 4 & 1\\
0 & 2 & 0 & 0 & 1 & 8 & 5 & 1 & 0 & 10 & 0 & 0 & 1 & 0 & 2 & 1 & 0 & 7 & 9 & 11 & 4 & 1\\
0 & 2 & 1 & 0 & 10 & 3 & 8 & 1 & 0 & 0 & 1 & 1 & 2 & 2 & 0 & 0 & 0 & 8 & 11 & 4 & 8 & 1\\
0 & 2 & 1 & 1 & 8 & 2 & 11 & 1 & 0 & 0 & 0 & 2 & 1 & 1 & 1 & 0 & 0 & 8 & 11 & 4 & 8 & 1\\
0 & 2 & 1 & 2 & 4 & 11 & 5 & 0 & 0 & 0 & 1 & 2 & 0 & 0 & 3 & 0 & 0 & 9 & 8 & 7 & 7 & 1\\
0 & 2 & 1 & 2 & 5 & 10 & 4 & 0 & 0 & 1 & 1 & 1 & 1 & 1 & 2 & 0 & 0 & 9 & 8 & 7 & 7 & 1\\
0 & 2 & 1 & 2 & 6 & 9 & 3 & 0 & 0 & 2 & 1 & 0 & 2 & 2 & 1 & 0 & 0 & 9 & 8 & 7 & 7 & 2\\
0 & 2 & 1 & 3 & 4 & 8 & 6 & 0 & 0 & 2 & 0 & 1 & 1 & 1 & 2 & 0 & 0 & 9 & 8 & 7 & 7 & 1\\
0 & 2 & 1 & 3 & 5 & 7 & 5 & 0 & 0 & 3 & 0 & 0 & 2 & 2 & 1 & 0 & 0 & 9 & 8 & 7 & 7 & 1\\
0 & 3 & 0 & 0 & 7 & 11 & 3 & 0 & 0 & 0 & 2 & 1 & 2 & 0 & 2 & 0 & 0 & 9 & 8 & 7 & 7 & 1\\
0 & 3 & 0 & 2 & 6 & 6 & 6 & 0 & 0 & 3 & 0 & 0 & 3 & 1 & 1 & 0 & 0 & 9 & 8 & 7 & 7 & 1\\
0 & 3 & 0 & 4 & 1 & 12 & 2 & 0 & 2 & 2 & 0 & 0 & 2 & 1 & 1 & 1 & 0 & 10 & 5 & 10 & 6 & 1\\
0 & 3 & 0 & 4 & 2 & 10 & 2 & 0 & 3 & 2 & 0 & 0 & 2 & 0 & 3 & 0 & 0 & 10 & 5 & 10 & 6 & 1\\
0 & 3 & 1 & 12 & 3 & 6 & 0 & 0 & 0 & 0 & 0 & 3 & 0 & 0 & 3 & 0 & 0 & 12 & 4 & 6 & 9 & 1\\
1 & 0 & 0 & 0 & 0 & 0 & 0 & 2 & 0 & 25 & 0 & 0 & 0 & 0 & 0 & 3 & 0 & 5 & 10 & 15 & 1 & 1\\
1 & 0 & 0 & 0 & 0 & 0 & 25 & 3 & 0 & 0 & 0 & 0 & 0 & 0 & 0 & 1 & 1 & 5 & 15 & 5 & 6 & 1\\
1 & 0 & 1 & 0 & 0 & 6 & 8 & 0 & 2 & 9 & 0 & 0 & 1 & 0 & 3 & 0 & 0 & 7 & 9 & 11 & 4 & 1\\
1 & 0 & 2 & 0 & 7 & 5 & 10 & 1 & 0 & 0 & 0 & 2 & 1 & 2 & 0 & 0 & 0 & 8 & 11 & 4 & 8 & 1\\
1 & 0 & 2 & 2 & 4 & 10 & 4 & 0 & 0 & 3 & 0 & 0 & 2 & 3 & 0 & 0 & 0 & 9 & 8 & 7 & 7 & 2\\
1 & 1 & 1 & 1 & 4 & 10 & 6 & 0 & 0 & 2 & 0 & 1 & 2 & 1 & 1 & 0 & 0 & 9 & 8 & 7 & 7 & 2\\
1 & 2 & 0 & 2 & 1 & 14 & 2 & 0 & 2 & 2 & 0 & 0 & 3 & 1 & 0 & 1 & 0 & 10 & 5 & 10 & 6 & 2\\
1 & 2 & 0 & 2 & 2 & 12 & 2 & 0 & 3 & 2 & 0 & 0 & 3 & 0 & 2 & 0 & 0 & 10 & 5 & 10 & 6 & 1\\
1 & 3 & 0 & 9 & 5 & 6 & 0 & 0 & 0 & 1 & 0 & 2 & 3 & 0 & 1 & 0 & 0 & 12 & 4 & 6 & 9 & 1\\
2 & 0 & 0 & 0 & 0 & 0 & 0 & 0 & 25 & 0 & 0 & 0 & 0 & 0 & 0 & 4 & 0 & 10 & 0 & 20 & 1 & 1\\
2 & 0 & 0 & 0 & 0 & 25 & 0 & 1 & 0 & 0 & 0 & 0 & 0 & 0 & 0 & 2 & 1 & 10 & 5 & 10 & 6 & 2\\
2 & 0 & 0 & 0 & 25 & 0 & 0 & 2 & 0 & 0 & 0 & 0 & 0 & 0 & 0 & 0 & 2 & 10 & 10 & 0 & 11 & 2\\
2 & 1 & 1 & 8 & 3 & 10 & 0 & 0 & 0 & 0 & 0 & 3 & 2 & 0 & 1 & 0 & 0 & 12 & 4 & 6 & 9 & 1\\
2 & 2 & 0 & 7 & 5 & 8 & 0 & 0 & 0 & 1 & 0 & 2 & 4 & 0 & 0 & 0 & 0 & 12 & 4 & 6 & 9 & 1\\
3 & 0 & 0 & 25 & 0 & 0 & 0 & 0 & 0 & 0 & 0 & 0 & 0 & 0 & 0 & 1 & 2 & 15 & 0 & 5 & 11 & 1\\
3 & 0 & 0 & 6 & 6 & 12 & 0 & 0 & 0 & 0 & 0 & 0 & 3 & 0 & 0 & 0 & 1 & 12 & 4 & 6 & 9 & 1\\
\hline
\end{tabular}
\caption{Strong $(3\mod 5)$-arcs in $\PG(2,5)$ of cardinality $43$.}
\label{tab_strong_3_mod_5_arcs_pg_2_5_n_43}
\end{center}
\end{table}

\begin{table}[htp]
\setlength{\tabcolsep}{0.3mm}
\begin{center}
\begin{tabular}{|c|c|c|c|c|c|c|c|c|c|c|c|c|c|c|c|c||c|c|c|c||c|}
\hline
$A_1$ & $A_2$ & $A_3$ & $B_1$ & $B_2$ & $B_3$ & $B_4$ & $B_5$ & $B_6$ & $B_7$ & $B_8$ & $C_1$ & $C_2$ & $C_3$ & $C_4$ & $C_5$ & $D_1$ & $\lambda_0$ & $\lambda_1$ & $\lambda_2$ & $\lambda_3$ & $\#$\\
\hline
0 & 0 & 0 & 0 & 12 & 3 & 6 & 3 & 0 & 0 & 0 & 0 & 0 & 6 & 0 & 0 & 1 & 6 & 12 & 3 & 10 & 1\\
0 & 0 & 0 & 0 & 2 & 11 & 0 & 0 & 2 & 6 & 2 & 0 & 0 & 1 & 5 & 2 & 0 & 6 & 7 & 13 & 5 & 1\\
0 & 0 & 0 & 0 & 2 & 4 & 12 & 1 & 0 & 0 & 4 & 0 & 0 & 6 & 1 & 1 & 0 & 4 & 13 & 7 & 7 & 1\\
0 & 0 & 0 & 0 & 2 & 8 & 6 & 0 & 0 & 4 & 3 & 0 & 0 & 3 & 4 & 1 & 0 & 5 & 10 & 10 & 6 & 1\\
0 & 0 & 0 & 0 & 3 & 3 & 10 & 2 & 0 & 2 & 3 & 0 & 0 & 5 & 3 & 0 & 0 & 4 & 13 & 7 & 7 & 1\\
0 & 0 & 0 & 0 & 3 & 6 & 6 & 0 & 1 & 4 & 3 & 0 & 0 & 2 & 6 & 0 & 0 & 5 & 10 & 10 & 6 & 1\\
0 & 0 & 0 & 1 & 0 & 11 & 2 & 0 & 1 & 7 & 1 & 0 & 0 & 2 & 3 & 3 & 0 & 6 & 7 & 13 & 5 & 1\\
0 & 0 & 0 & 1 & 2 & 1 & 12 & 2 & 0 & 3 & 2 & 0 & 0 & 5 & 3 & 0 & 0 & 4 & 13 & 7 & 7 & 1\\
0 & 0 & 0 & 1 & 2 & 4 & 8 & 0 & 1 & 5 & 2 & 0 & 0 & 2 & 6 & 0 & 0 & 5 & 10 & 10 & 6 & 1\\
0 & 0 & 0 & 1 & 2 & 5 & 6 & 1 & 0 & 7 & 1 & 0 & 0 & 2 & 6 & 0 & 0 & 5 & 10 & 10 & 6 & 1\\
0 & 0 & 0 & 1 & 2 & 7 & 2 & 0 & 3 & 7 & 1 & 0 & 0 & 0 & 7 & 1 & 0 & 6 & 7 & 13 & 5 & 1\\
0 & 0 & 0 & 2 & 0 & 4 & 10 & 0 & 0 & 6 & 1 & 0 & 0 & 3 & 4 & 1 & 0 & 5 & 10 & 10 & 6 & 1\\
0 & 0 & 0 & 2 & 0 & 7 & 4 & 0 & 2 & 8 & 0 & 0 & 0 & 1 & 5 & 2 & 0 & 6 & 7 & 13 & 5 & 1\\
0 & 0 & 0 & 2 & 0 & 8 & 0 & 0 & 7 & 6 & 0 & 0 & 0 & 0 & 4 & 4 & 0 & 7 & 4 & 16 & 4 & 1\\
0 & 0 & 0 & 3 & 6 & 6 & 9 & 0 & 0 & 0 & 0 & 0 & 0 & 3 & 3 & 0 & 1 & 7 & 9 & 6 & 9 & 1\\
0 & 0 & 0 & 4 & 2 & 14 & 4 & 0 & 0 & 0 & 0 & 0 & 0 & 1 & 4 & 1 & 1 & 8 & 6 & 9 & 8 & 1\\
0 & 0 & 1 & 0 & 9 & 3 & 6 & 3 & 0 & 0 & 0 & 3 & 0 & 6 & 0 & 0 & 0 & 6 & 12 & 3 & 10 & 1\\
0 & 0 & 1 & 1 & 8 & 7 & 2 & 0 & 0 & 1 & 2 & 0 & 3 & 6 & 0 & 0 & 0 & 7 & 9 & 6 & 9 & 1\\
0 & 0 & 1 & 2 & 5 & 7 & 6 & 0 & 0 & 0 & 1 & 2 & 1 & 4 & 2 & 0 & 0 & 7 & 9 & 6 & 9 & 1\\
0 & 0 & 1 & 3 & 4 & 5 & 8 & 0 & 0 & 1 & 0 & 2 & 1 & 4 & 2 & 0 & 0 & 7 & 9 & 6 & 9 & 1\\
0 & 0 & 1 & 3 & 5 & 4 & 7 & 0 & 0 & 2 & 0 & 1 & 2 & 5 & 1 & 0 & 0 & 7 & 9 & 6 & 9 & 2\\
0 & 0 & 1 & 3 & 6 & 3 & 6 & 0 & 0 & 3 & 0 & 0 & 3 & 6 & 0 & 0 & 0 & 7 & 9 & 6 & 9 & 2\\
0 & 0 & 1 & 4 & 2 & 10 & 2 & 0 & 1 & 2 & 0 & 1 & 2 & 2 & 4 & 0 & 0 & 8 & 6 & 9 & 8 & 2\\
0 & 0 & 1 & 4 & 3 & 9 & 1 & 0 & 1 & 3 & 0 & 0 & 3 & 3 & 3 & 0 & 0 & 8 & 6 & 9 & 8 & 2\\
0 & 0 & 1 & 6 & 0 & 9 & 0 & 0 & 6 & 0 & 0 & 0 & 3 & 0 & 6 & 0 & 0 & 9 & 3 & 12 & 7 & 1\\
0 & 0 & 2 & 6 & 12 & 0 & 0 & 0 & 0 & 0 & 1 & 6 & 0 & 4 & 0 & 0 & 0 & 9 & 8 & 2 & 12 & 1\\
0 & 1 & 0 & 1 & 6 & 6 & 7 & 0 & 0 & 0 & 1 & 2 & 2 & 3 & 2 & 0 & 0 & 7 & 9 & 6 & 9 & 1\\
0 & 1 & 0 & 1 & 7 & 5 & 6 & 0 & 0 & 1 & 1 & 1 & 3 & 4 & 1 & 0 & 0 & 7 & 9 & 6 & 9 & 1\\
0 & 1 & 0 & 2 & 3 & 13 & 0 & 1 & 0 & 2 & 0 & 2 & 2 & 0 & 5 & 0 & 0 & 8 & 6 & 9 & 8 & 1\\
0 & 1 & 0 & 2 & 5 & 4 & 9 & 0 & 0 & 1 & 0 & 2 & 2 & 3 & 2 & 0 & 0 & 7 & 9 & 6 & 9 & 1\\
0 & 1 & 0 & 3 & 1 & 12 & 4 & 0 & 0 & 1 & 0 & 2 & 2 & 1 & 3 & 1 & 0 & 8 & 6 & 9 & 8 & 1\\
0 & 1 & 0 & 3 & 2 & 11 & 3 & 0 & 0 & 2 & 0 & 1 & 3 & 2 & 2 & 1 & 0 & 8 & 6 & 9 & 8 & 1\\
0 & 1 & 0 & 3 & 3 & 10 & 2 & 0 & 0 & 3 & 0 & 0 & 4 & 3 & 1 & 1 & 0 & 8 & 6 & 9 & 8 & 1\\
0 & 1 & 0 & 3 & 3 & 9 & 3 & 0 & 1 & 2 & 0 & 1 & 3 & 1 & 4 & 0 & 0 & 8 & 6 & 9 & 8 & 2\\
0 & 1 & 0 & 3 & 4 & 8 & 2 & 0 & 1 & 3 & 0 & 0 & 4 & 2 & 3 & 0 & 0 & 8 & 6 & 9 & 8 & 2\\
0 & 1 & 0 & 5 & 0 & 10 & 1 & 0 & 5 & 0 & 0 & 0 & 4 & 0 & 4 & 1 & 0 & 9 & 3 & 12 & 7 & 1\\
0 & 2 & 0 & 12 & 0 & 6 & 0 & 0 & 1 & 0 & 0 & 2 & 6 & 1 & 0 & 1 & 0 & 11 & 2 & 8 & 10 & 1\\
1 & 0 & 0 & 0 & 6 & 5 & 8 & 0 & 0 & 2 & 0 & 1 & 4 & 4 & 0 & 0 & 0 & 7 & 9 & 6 & 9 & 1\\
1 & 0 & 0 & 1 & 4 & 10 & 2 & 0 & 1 & 3 & 0 & 0 & 5 & 2 & 2 & 0 & 0 & 8 & 6 & 9 & 8 & 1\\
1 & 0 & 0 & 4 & 14 & 0 & 4 & 0 & 0 & 0 & 0 & 4 & 1 & 2 & 0 & 0 & 1 & 9 & 8 & 2 & 12 & 1\\
1 & 0 & 1 & 4 & 11 & 0 & 4 & 0 & 0 & 0 & 0 & 7 & 1 & 2 & 0 & 0 & 0 & 9 & 8 & 2 & 12 & 1\\
2 & 0 & 0 & 8 & 1 & 8 & 0 & 0 & 2 & 0 & 0 & 2 & 8 & 0 & 0 & 0 & 0 & 11 & 2 & 8 & 10 & 2\\
\hline
\end{tabular}
\caption{Strong $(3\mod 5)$-arcs in $\PG(2,5)$ of cardinality $48$.}
\label{tab_strong_3_mod_5_arcs_pg_2_5_n_48}
\end{center}
\end{table}

\begin{table}[htp]
\setlength{\tabcolsep}{0.3mm}
\begin{center}
\begin{tabular}{|c|c|c|c|c|c|c|c|c|c|c|c|c|c|c|c|c||c|c|c|c||c|}
\hline
$A_1$ & $A_2$ & $A_3$ & $B_1$ & $B_2$ & $B_3$ & $B_4$ & $B_5$ & $B_6$ & $B_7$ & $B_8$ & $C_1$ & $C_2$ & $C_3$ & $C_4$ & $C_5$ & $D_1$ & $\lambda_0$ & $\lambda_1$ & $\lambda_2$ & $\lambda_3$ & $\#$\\
\hline
0 & 0 & 0 & 0 & 5 & 10 & 0 & 0 & 0 & 0 & 2 & 2 & 4 & 4 & 4 & 0 & 0 & 6 & 7 & 8 & 10 & 1\\
0 & 0 & 0 & 0 & 6 & 4 & 5 & 1 & 0 & 0 & 1 & 3 & 2 & 8 & 1 & 0 & 0 & 5 & 10 & 5 & 11 & 1\\
0 & 0 & 0 & 1 & 3 & 9 & 3 & 0 & 0 & 0 & 1 & 3 & 3 & 3 & 5 & 0 & 0 & 6 & 7 & 8 & 10 & 1\\
0 & 0 & 0 & 1 & 4 & 3 & 8 & 1 & 0 & 0 & 0 & 4 & 1 & 7 & 2 & 0 & 0 & 5 & 10 & 5 & 11 & 1\\
0 & 0 & 0 & 1 & 6 & 0 & 8 & 0 & 1 & 0 & 1 & 2 & 3 & 9 & 0 & 0 & 0 & 5 & 10 & 5 & 11 & 1\\
0 & 0 & 0 & 2 & 2 & 7 & 5 & 0 & 0 & 1 & 0 & 3 & 3 & 3 & 5 & 0 & 0 & 6 & 7 & 8 & 10 & 1\\
0 & 0 & 0 & 2 & 3 & 6 & 4 & 0 & 0 & 2 & 0 & 2 & 4 & 4 & 4 & 0 & 0 & 6 & 7 & 8 & 10 & 2\\
0 & 0 & 0 & 3 & 0 & 11 & 1 & 0 & 1 & 1 & 0 & 1 & 6 & 2 & 3 & 2 & 0 & 7 & 4 & 11 & 9 & 1\\
0 & 0 & 0 & 3 & 2 & 8 & 0 & 0 & 2 & 2 & 0 & 0 & 7 & 2 & 4 & 1 & 0 & 7 & 4 & 11 & 9 & 2\\
0 & 0 & 0 & 9 & 3 & 6 & 0 & 0 & 0 & 0 & 0 & 3 & 6 & 0 & 3 & 0 & 1 & 9 & 3 & 7 & 12 & 1\\
0 & 0 & 1 & 9 & 0 & 6 & 0 & 0 & 0 & 0 & 0 & 6 & 6 & 0 & 3 & 0 & 0 & 9 & 3 & 7 & 12 & 1\\
0 & 1 & 0 & 1 & 12 & 0 & 0 & 2 & 0 & 0 & 0 & 11 & 0 & 4 & 0 & 0 & 0 & 7 & 9 & 1 & 14 & 1\\
0 & 1 & 0 & 8 & 2 & 4 & 0 & 0 & 0 & 1 & 0 & 5 & 8 & 0 & 2 & 0 & 0 & 9 & 3 & 7 & 12 & 1\\
1 & 0 & 0 & 10 & 0 & 0 & 0 & 0 & 5 & 0 & 0 & 0 & 15 & 0 & 0 & 0 & 0 & 10 & 0 & 10 & 11 & 1\\
1 & 0 & 0 & 6 & 2 & 6 & 0 & 0 & 0 & 1 & 0 & 5 & 9 & 0 & 1 & 0 & 0 & 9 & 3 & 7 & 12 & 1\\
\hline
\end{tabular}
\caption{Strong $(3\mod 5)$-arcs in $\PG(2,5)$ of cardinality $53$.}
\label{tab_strong_3_mod_5_arcs_pg_2_5_n_53}
\end{center}
\end{table}

\begin{table}[htp]
\setlength{\tabcolsep}{0.3mm}
\begin{center}
\begin{tabular}{|c|c|c|c|c|c|c|c|c|c|c|c|c|c|c|c|c||c|c|c|c||c|}
\hline
$A_1$ & $A_2$ & $A_3$ & $B_1$ & $B_2$ & $B_3$ & $B_4$ & $B_5$ & $B_6$ & $B_7$ & $B_8$ & $C_1$ & $C_2$ & $C_3$ & $C_4$ & $C_5$ & $D_1$ & $\lambda_0$ & $\lambda_1$ & $\lambda_2$ & $\lambda_3$ & $\#$\\
\hline
0 & 0 & 0 & 0 & 0 & 10 & 0 & 1 & 0 & 0 & 0 & 5 & 5 & 0 & 10 & 0 & 0 & 5 & 5 & 10 & 11 & 1\\
0 & 0 & 0 & 0 & 3 & 3 & 3 & 0 & 0 & 1 & 1 & 3 & 5 & 9 & 3 & 0 & 0 & 4 & 8 & 7 & 12 & 1\\
0 & 0 & 0 & 1 & 1 & 2 & 6 & 0 & 0 & 1 & 0 & 4 & 4 & 8 & 4 & 0 & 0 & 4 & 8 & 7 & 12 & 1\\
0 & 0 & 0 & 1 & 1 & 5 & 2 & 0 & 1 & 1 & 0 & 3 & 7 & 2 & 8 & 0 & 0 & 5 & 5 & 10 & 11 & 1\\
0 & 0 & 0 & 1 & 1 & 6 & 1 & 0 & 0 & 2 & 0 & 2 & 8 & 4 & 5 & 1 & 0 & 5 & 5 & 10 & 11 & 1\\
0 & 0 & 0 & 1 & 2 & 4 & 1 & 0 & 1 & 2 & 0 & 2 & 8 & 3 & 7 & 0 & 0 & 5 & 5 & 10 & 11 & 1\\
0 & 0 & 0 & 1 & 3 & 0 & 4 & 0 & 0 & 3 & 0 & 2 & 6 & 10 & 2 & 0 & 0 & 4 & 8 & 7 & 12 & 1\\
0 & 0 & 0 & 1 & 3 & 3 & 0 & 0 & 1 & 3 & 0 & 1 & 9 & 4 & 6 & 0 & 0 & 5 & 5 & 10 & 11 & 1\\
0 & 0 & 0 & 2 & 1 & 4 & 0 & 0 & 4 & 0 & 0 & 0 & 12 & 0 & 6 & 2 & 0 & 6 & 2 & 13 & 10 & 1\\
0 & 0 & 0 & 3 & 6 & 0 & 3 & 0 & 0 & 0 & 0 & 9 & 3 & 6 & 0 & 0 & 1 & 6 & 7 & 3 & 15 & 1\\
0 & 0 & 1 & 3 & 3 & 0 & 3 & 0 & 0 & 0 & 0 & 12 & 3 & 6 & 0 & 0 & 0 & 6 & 7 & 3 & 15 & 1\\
\hline
\end{tabular}
\caption{Strong $(3\mod 5)$-arcs in $\PG(2,5)$ of cardinality $58$.}
\label{tab_strong_3_mod_5_arcs_pg_2_5_n_58}
\end{center}
\end{table}

\begin{table}[htp]
\setlength{\tabcolsep}{0.3mm}
\begin{center}
\begin{tabular}{|c|c|c|c|c|c|c|c|c|c|c|c|c|c|c|c|c||c|c|c|c||c|}
\hline
$A_1$ & $A_2$ & $A_3$ & $B_1$ & $B_2$ & $B_3$ & $B_4$ & $B_5$ & $B_6$ & $B_7$ & $B_8$ & $C_1$ & $C_2$ & $C_3$ & $C_4$ & $C_5$ & $D_1$ & $\lambda_0$ & $\lambda_1$ & $\lambda_2$ & $\lambda_3$ & $\#$\\
\hline
0 & 0 & 0 & 0 & 0 & 0 & 0 & 0 & 0 & 0 & 5 & 0 & 0 & 15 & 10 & 1 & 0 & 0 & 10 & 10 & 11 & 1\\
0 & 0 & 0 & 0 & 0 & 0 & 0 & 0 & 0 & 3 & 2 & 1 & 2 & 6 & 15 & 2 & 0 & 1 & 7 & 13 & 10 & 1\\
0 & 0 & 0 & 0 & 0 & 0 & 0 & 0 & 1 & 4 & 0 & 0 & 6 & 2 & 12 & 6 & 0 & 2 & 4 & 16 & 9 & 1\\
0 & 0 & 0 & 0 & 0 & 1 & 4 & 1 & 0 & 0 & 0 & 4 & 2 & 14 & 4 & 0 & 1 & 2 & 9 & 6 & 14 & 1\\
0 & 0 & 0 & 0 & 0 & 3 & 3 & 0 & 0 & 0 & 0 & 3 & 6 & 6 & 9 & 0 & 1 & 3 & 6 & 9 & 13 & 1\\
0 & 0 & 0 & 0 & 0 & 6 & 0 & 0 & 0 & 0 & 0 & 0 & 12 & 3 & 6 & 3 & 1 & 4 & 3 & 12 & 12 & 1\\
0 & 0 & 0 & 4 & 1 & 2 & 0 & 0 & 0 & 0 & 0 & 4 & 14 & 0 & 4 & 0 & 2 & 6 & 2 & 8 & 15 & 1\\
0 & 0 & 1 & 0 & 0 & 0 & 0 & 0 & 0 & 3 & 0 & 3 & 9 & 9 & 6 & 0 & 0 & 3 & 6 & 9 & 13 & 1\\
0 & 0 & 1 & 0 & 0 & 0 & 0 & 0 & 3 & 0 & 0 & 3 & 12 & 0 & 12 & 0 & 0 & 4 & 3 & 12 & 12 & 1\\
\hline
\end{tabular}
\caption{Strong $(3\mod 5)$-arcs in $\PG(2,5)$ of cardinality $63$.}
\label{tab_strong_3_mod_5_arcs_pg_2_5_n_63}
\end{center}
\end{table}

\begin{table}[htp]
\setlength{\tabcolsep}{0.3mm}
\begin{center}
\begin{tabular}{|c|c|c|c|c|c|c|c|c|c|c|c|c|c|c|c|c||c|c|c|c||c|}
\hline
$A_1$ & $A_2$ & $A_3$ & $B_1$ & $B_2$ & $B_3$ & $B_4$ & $B_5$ & $B_6$ & $B_7$ & $B_8$ & $C_1$ & $C_2$ & $C_3$ & $C_4$ & $C_5$ & $D_1$ & $\lambda_0$ & $\lambda_1$ & $\lambda_2$ & $\lambda_3$ & $\#$\\
\hline
0 & 0 & 0 & 0 & 0 & 0 & 0 & 0 & 0 & 0 & 0 & 0 & 0 & 0 & 0 & 30 & 1 & 0 & 0 & 25 & 6 & 1\\
0 & 0 & 0 & 0 & 0 & 0 & 0 & 1 & 0 & 0 & 0 & 0 & 0 & 0 & 25 & 3 & 2 & 0 & 5 & 15 & 11 & 1\\
0 & 0 & 0 & 0 & 0 & 0 & 0 & 2 & 0 & 0 & 0 & 0 & 0 & 25 & 0 & 1 & 3 & 0 & 10 & 5 & 16 & 1\\
0 & 0 & 0 & 0 & 3 & 0 & 0 & 0 & 0 & 0 & 0 & 6 & 6 & 12 & 0 & 0 & 4 & 3 & 6 & 4 & 18 & 1\\
1 & 0 & 0 & 0 & 0 & 0 & 0 & 0 & 0 & 0 & 0 & 0 & 25 & 0 & 0 & 2 & 3 & 5 & 0 & 10 & 16 & 1\\
1 & 0 & 0 & 0 & 0 & 0 & 0 & 1 & 0 & 0 & 0 & 25 & 0 & 0 & 0 & 0 & 4 & 5 & 5 & 0 & 21 & 1\\
\hline
\end{tabular}
\caption{Strong $(3\mod 5)$-arcs in $\PG(2,5)$ of cardinality $68$.}
\label{tab_strong_3_mod_5_arcs_pg_2_5_n_68}
\end{center}
\end{table}

\begin{table}[htp]
\setlength{\tabcolsep}{0.3mm}
\begin{center}
\begin{tabular}{|c|c|c|c|c|c|c|c|c|c|c|c|c|c|c|c|c||c|c|c|c||c|}
\hline
$A_1$ & $A_2$ & $A_3$ & $B_1$ & $B_2$ & $B_3$ & $B_4$ & $B_5$ & $B_6$ & $B_7$ & $B_8$ & $C_1$ & $C_2$ & $C_3$ & $C_4$ & $C_5$ & $D_1$ & $\lambda_0$ & $\lambda_1$ & $\lambda_2$ & $\lambda_3$ & $\#$\\
\hline
0 & 0 & 0 & 0 & 0 & 0 & 0 & 0 & 0 & 0 & 0 & 0 & 0 & 0 & 0 & 0 & 31 & 0 & 0 & 0 & 31 & 1\\
\hline
\end{tabular}
\caption{Strong $(3\mod 5)$-arcs in $\PG(2,5)$ of cardinality $93$.}
\label{tab_strong_3_mod_5_arcs_pg_2_5_n_93}
\end{center}
\end{table}

\end{document}